\crefname{assumption}{Assumption}{Assumptions}
\title{Analysis of the SiMPL method for density-based topology optimization\thanks{Current version as of \today.
\funding{
  This work was performed under the auspices of the U.S.\ Department of Energy by Lawrence Livermore National Laboratory under contract DE-AC52-07NA27344 and the LLNL-LDRD Program under Project tracking No.\ 22-ERD-009.
  Release number LLNL-JRNL-870167.
  BK, DK, and BL were partially supported by the LLNL-LDRD Program under Project Tracking No.\ 22-ERD-009.
  BK and DK were also supported in part by the U.S.\ Department of Energy Office of Science Early Career Research Program under Award Number DE-SC0024335.
    }
}}
\author{
  Brendan~Keith\footnotemark[2]
  \and
  Dohyun~Kim\thanks{Division of Applied Mathematics, Brown University, Providence, RI 02912, United States of America
  (\email{brendan\_keith@brown.edu}, \email{dohyun\_kim@brown.edu}).}
  \and Boyan~S.~Lazarov\thanks{Lawrence Livermore National Laboratory, Livermore, CA 94550, United States of America
  (\email{lazarov2@llnl.gov}).}
  \and Thomas~M.~Surowiec\thanks{Department of Numerical Analysis and Scientific Computing, Simula Research Laboratory, 0164 Oslo, Norway
  (\email{thomasms@simula.no}).}
}
\newcommand{\argmin}{\mathop{\mathrm{argmin}}}
\newcommand{\sig}{\sigma}
\newcommand{\iprod}[2]{\langle #1, #2\rangle}
\DeclareMathOperator*{\esssup}{ess\,sup}
\DeclareMathOperator*{\essinf}{ess\,inf}
\DeclareMathOperator{\dd}{d\!}
\DeclareMathOperator{\C}{\mathsf{C}}
\crefname{equation}{}{}
\crefname{algocf}{algorithm}{algorithms}
\newcommand{\sbullet}{%
  \hbox{\fontfamily{lmr}\fontsize{.6\dimexpr(\f@size pt)}{0}\selectfont\textbullet}}
\DeclareRobustCommand{\mathbullet}{\accentset{\sbullet}}
\pgfplotsset{width=7cm,compat=1.8}
\definecolor{color0}{rgb}{0.7843, 0.7843, 0.7843}
\definecolor{color1}{rgb}{0, 0.4470, 0.7410}
\definecolor{color2}{rgb}{0.8500, 0.3250, 0.0980} \definecolor{color3}{rgb}{0.9290, 0.6940, 0.1250}
\definecolor{color4}{rgb}{0.7060, 0.3840, 0.7650}
\definecolor{color5}{rgb}{0.4660, 0.6740, 0.1880}
\definecolor{color6}{rgb}{0.3010, 0.7450, 0.9330}
\definecolor{color7}{rgb}{0.6350, 0.0780, 0.1840}
\definecolor{color8}{rgb}{0.0, 0.4078, 0.3412}
\begin{document}

\setlength{\abovedisplayshortskip}{2.1pt}
\setlength{\belowdisplayshortskip}{2.1pt}
\setlength{\abovedisplayskip}{2.1pt}
\setlength{\belowdisplayskip}{2.1pt}
\captionsetup{belowskip=0pt}

\maketitle
%!TEX root = ./main.tex
\begin{abstract}
We present a rigorous convergence analysis of a new method for density-based topology optimization that provides point-wise bound preserving design updates and faster convergence than other popular first-order topology optimization methods.
	Due to its strong bound preservation, the method is exceptionally robust, as demonstrated in numerous examples here and in the companion article \cite{simplapp}.
	Furthermore, it is easy to implement with clear structure and analytical expressions for the updates.
	Our analysis covers two versions of the method, characterized by the employed line search strategies.
	We consider a modified Armijo backtracking line search and a Bregman backtracking line search.
	For both line search algorithms, our algorithm delivers a strict monotone decrease in the objective function and further intuitive convergence properties, e.g., strong and pointwise convergence of the density variables on the active sets, norm convergence to zero of the increments, convergence of the Lagrange multipliers, and more.
	In addition, the numerical experiments demonstrate apparent mesh-independent convergence of the algorithm. We refer to the new algorithm as the SiMPL method, pronounced like ``simple", which stands for \underline{Si}gmoidal \underline{M}irror descent with a \underline{P}rojected \underline{L}atent variable.
\end{abstract}

\begin{keywords}
	topology optimization, mirror descent, Bregman divergence, line search, nonconvex optimization.
\end{keywords}

\begin{MSCcodes}
	68Q25, 68R10, 68U05
\end{MSCcodes}

%!TEX root = ./main.tex
\section{Introduction}

Topology optimization is a powerful tool in engineering design, enabling the discovery of optimized, efficient, and innovative structures and components. The technique finds an optimized material layout within a given design space for a prescribed set of loads, boundary conditions, and constraints. The overall goal is to maximize the performance of a structure, component, or the entire physical system. There is a wealth of literature on the subject, and we point the interested reader to the well-known monographs \cite{Bendsoe1995,Bendsoe2004} as a starting point. Further key references are provided throughout the text below.

We note here that, while second-order optimization methods have been developed for topology optimization \cite{Rojas-Labanda2016, Adam2018, Baraldi2023}, first-order methods remain of significant interest and are the most widely used in academia and industry \cite{Svanberg1987, Fleury1989, Zhou1991, Svanberg2002}. Second-order methods converge in fewer (outer) iterations. However, the computational cost per iteration is significantly higher than first-order methods, which, in contrast, offer simple algorithmic implementations, thereby further strengthening their popularity.

The main aim of this paper is to analyze a simple and efficient new first-order method for density-based topology optimization: \underline{Si}gmoidal \underline{M}irror descent with a \underline{P}rojected \underline{L}atent variable or, simply, the SiMPL method.
The overall goal is to provide the SiMPL method with a rigorous mathematical foundation.
The companion article~\cite{simplapp} focuses on applications of the SiMPL method and its computer implementation. Nevertheless, we do include several illustrative numerical examples at the end of this paper. The two numerical examples illustrate the advantages of a first-optimize-then-discretize approach, not only in terms of the formulation of the algorithm and corresponding convergence analysis, which extends to any conforming discretization, but also by demonstrating mesh and order independence.

\subsection{Background}
\label{sub:Background}

In an ideal setting, the optimal material layout would be represented by a binary-valued mapping over the design domain $\Omega \subset \mathbb{R}^d$, where $0$ represents the void (no material) and $1$ indicates the placement of material.
However, this would result in an essentially intractable infinite-dimensional mixed integer program and lead to a so-called ``checkerboard'' design when implemented.
A popular solution to this issue is to relax the binary requirement and search for  a material distribution $\rho $ with $\rho(x) \in [0,1]$ at each point $x \in \Omega$.
This is called density-based topology optimization.
Somewhat surprisingly, this relaxation is not enough to guarantee the existence of a solution on a continuous level.
One possibility to guarantee the existence of a solution is to convolve the design density $\rho$ with a filter function \cite{Bourdin2001}.
Filtering the density $\rho$ introduces enough compactness into the continuous problem to ensure convergence of the filtered density $\tilde{\rho}= \tilde\rho(\rho)$ in a sufficiently regular function space.
It also has the practical side effect of removing the non-physical checkerboard patterns \cite{lazarov2011-filter} seen in the early topology optimization literature. This approach is referred to as the two-field density representation.

The mathematical model governing the mechanical behavior of the system is often a linear elliptic partial differential equation (PDE) defined over the entire design domain $\Omega$. The decision variable $\rho$ then enters the consitutive law of this PDE through an interpolation of the filtered density $\tilde\rho$ between material and void states. Unfortunately, interpolating to $0$ (complete void) is not appropriate since having subsets with no material can affect the well-posedness of the governing PDE. The usual remedy is setting a minimal, artificial material density $0 < \rho_0 \ll 1$ throughout the entire design domain. Furthermore, experience shows that linearly interpolating between $\rho_0$ and $1$ will usually not promote a binary design. One solution is to transform the filtered density $\tilde\rho$ by a nonlinear material interpolation law $r=r\left(\tilde\rho\right)$, which penalizes intermediate values, thus promoting a zero/one design \cite{Bendsoe1999}. The so-called Solid Isotropic Material with Penalization (SIMP) \cite{Bendsoe1989}, utilized here, is the most popular approach to steer the topology-optimized solution to a such a design state and has widely replaced the original homogenization formulation \cite{Bendsoe1988}. Given an appropriately chosen objective function, we can now formulate the task at hand as a meaningful PDE-constrained optimization problem.

The most popular numerical solution algorithms for topology optimization have been developed on the discrete level.
In other words, the PDE-constrained optimization problem is first discretized via, e.g., the finite element method, and then solved with various optimization algorithms, often used as black box solvers.
The optimality criteria method (OC) \cite{Bendsoe1995, Yin2001} and the Method of Moving Asymptotes (MMA) \cite{Svanberg1987} are the most widely used.
Despite being based on heuristic arguments and limited mathematical foundations \cite{Ananiev2005}, OC gained popularity due to its ease of implementation \cite{Sigmund2001}.
On the other hand, MMA became popular due to its flexibility, robust behavior, and openly available serial and parallel implementations; see, e.g., the popular open source Matlab \cite{Svanberg2014MMAAG} (serial) and C++ PETSc-based \cite{Aage2014} (parallel) implementations.

Our goal here is to approach the problem from a different angle.
Rather than first discretize and then optimize, we present an algorithm based on the latent variable proximal point (LVPP) framework for solving variational problems with pointwise constraints in continuous settings, which was proposed in \cite{keith2023proximal}.
LVPP is based on the classical proximal point method \cite{Moreau1965, rockafellar1976monotone, Parikh2014, martinet1970, Teboulle2018} with the pivotal innovation being the introduction of a so-called \emph{latent variable} to obtain subproblems that are convenient for discretization.
The latent variable generates a pointwise feasible discrete solution via an alternative solution representation described in \Cref{sub:LVMD} below.

Keith and Surowiec \cite{keith2023proximal} argued formally that the LVPP framework can be applied to topology optimization via mirror descent \cite{nemirovskij1983problem}, which is a popular first-order variant of the proximal point method \cite{beck2003mirror, Teboulle2018}.
The resulting algorithm displayed promising features such as a bound-preserving discrete solution, mesh-independence, and an easy computer implementation.
However, it utilizes an ad-hoc step size rule, $\alpha_k = ck$, which caused the convergence rate and the final design to depend significantly on the tunable parameter $c >0$; cf.~\cite[Section~6.4]{keith2023proximal}.

\subsection{Contributions}
In this work, we rigorously analyze the topology optimization algorithm referred to as \underline{Si}gmoidal \underline{M}irror descent with a \underline{P}rojected \underline{L}atent variable, a.k.a.\ the SiMPL method, given in \Cref{alg:SiMPL} below.
The standard theory of mirror descent \cite{nemirovskij1983problem} and Bregman proximal methods more generally, e.g., \cite{bauschke2001essential}, is carried out in \emph{reflexive} Banach spaces.
In general, the analysis of the SiMPL method requires specialized techniques beyond this setting.
For instance, the Bregman divergence used to derive the SiMPL method is only strongly convex in the \emph{non-reflexive} space $L^1(\Omega)$ \cite{Borwein1991}. 
Thus, beginning with the existence and uniqueness of the iterates established in \Cref{thm:entropy-projection}, we must often work with \emph{non-reflexive} norm topologies, weak topologies, and function spaces of extended real-valued functions on $\Omega$.
This leads to several technical contributions within the presented theory that reveal the intricate aspects of optimization in infinite-dimensional spaces.

\vspace*{-0.5em}
\begin{algorithm2e}[ht]
	\DontPrintSemicolon
	\caption{\label{alg:SiMPL}
		The SiMPL method for topology optimization.
	}

	\SetKwInOut{Function}{internal function\ }
	\SetKwInOut{Input}{input}
	\SetKwInOut{Output}{output}
	\BlankLine
	\Function{sigmoid function $\sig(\psi) \coloneqq \dfrac{\exp \psi}{\exp \psi + 1}$.}
	\BlankLine

\Input{initial latent variable $\psi \in L^\infty(\Omega)$.}
	\BlankLine
	\Output{optimized material density $\rho = \sig(\psi)$.}
	\BlankLine
	\BlankLine

\While{true}
	{
		Solve for $\mu \in \mathbb R$ such that $\int_\Omega \sig(\psi - \mu) \dd x = \theta |\Omega|$.
		\Comment*[r]{See \Cref{rem:mu-solve}}
		Correct the latent variable $\psi \leftarrow \psi - \mu$.\;
		Update the design density $\rho \leftarrow \sig(\psi)$.\;
		\If{a convergence test is satisfied}
		{
			\Return $\rho$.\;
		}
		Compute the gradient of the objective function $F$ with respect to $\rho$.\;
		Select the step size $\alpha > 0$.
		\Comment*[r]{ See \Cref{sec:adaptive}}
		Increment the latent variable $\psi \leftarrow  \psi - \alpha \nabla F(\rho)$.\;
	}

\end{algorithm2e}
\vspace*{-0.5em}

\subsection{Notation}
Throughout this work, $\Omega$ denotes an open bounded domain in $\mathbb{R}^d$, where $d = 2$ or $3$, with a Lipschitz boundary $\partial \Omega$.
Likewise, $L^p(\Omega)$ denotes the usual Lebesgue space of $p$-integrable functions when $1\leq p<\infty$, and essentially bounded functions when $p=\infty$, respectively.
The Sobolev space of $L^p$ functions with $L^p$-integrable weak derivatives is denoted by $W^{1,p}(\Omega)$.
We denote the $L^2(\Omega)$-inner product of two functions $u, v$ as $(u,v) = \int_\Omega uv \dd x$.
When $p=2$, we denote $H^1(\Omega)=W^{1,2}(\Omega)$.

The norm on a Banach space $X$ is denoted by $\|\cdot\|_X$ and the canonical pairing with its topological dual $X'$ is denoted by $\iprod{\cdot}{\cdot}_{X',X}$.
Whenever it is clear from context, we omit the subscripts.
For sequences of elements $x_k \in X$, indexed in by $k\in\mathbb{N}$, we simply write $\{x_k\}$.
The Lebesgue measure of a measurable subset $\mathcal{B} \subset \mathbb R^d$ is simply denoted by $|\mathcal{B}|$.
Let $\varphi\colon X \to \mathbb{R}\cup\{+\infty\}$ be a proper function.
We use $\operatorname{dom} \varphi \coloneqq \{ x \in X : \varphi(x) \in \mathbb{R}\}$ to denote the effective domain of $\varphi$.
Moreover, if $\varphi$ is smooth on the interior of its effective domain, then we use $\varphi^\prime \colon \operatorname{int}\operatorname{dom} \varphi \to X^\prime$ to denote the corresponding Fr\'echet derivative.

Throughout the text, a central role is played by the constraints on the density $\rho$. Depending on the setting, we need to view these constraints through different function spaces. Accordingly, we
define the following sets:
\begin{align*}
	L^p_\theta(\Omega)  & \coloneqq
	\Big\{\rho\in L^p(\Omega) : \int_\Omega\rho \dd x =  \theta|\Omega|\Big\}\,,
	\\
	L^p_{[0,1]}(\Omega) & \coloneqq \{\rho\in L^p(\Omega) : 0\leq\rho\leq1\text{ a.e.\ in }\Omega\}\,,
\end{align*}
where $1 \leq p \leq \infty$ distinguishes the norm topology and $0\leq\theta\leq1$ is a fixed parameter denoting the volume fraction of the desired design.
We elect to consider the volume constraint as an equality as we require the use of the Implicit Function Theorem later in the text. Much of the results on the update formula, etc.\ remain intact if we choose to use an inequality constraint.
We emphasize that $L^p_{[0,1]}(\Omega)$ does not change with $p$, but that it will have different norm topologies.
An important consequence is that the interior of the latter set vanishes, i.e., $\operatorname{int}L^p_{[0,1]}(\Omega) = \emptyset$, when $1\leq p <\infty$.
However,
\[
	\operatorname{int}L^\infty_{[0,1]}(\Omega)=\{\rho\in L^\infty(\Omega) : \essinf \rho > 0 ~\text{ and } \esssup \rho < 1\}
	\,,
\]
as can be readily verified.
Next, we define the set of admissible densities,
\begin{equation}
	\label{eq:AdmissibleSet}
	\mathcal{A}
	\coloneqq
	L^p_\theta(\Omega) \cap L^p_{[0,1]}(\Omega)
	\,,
\end{equation}
where we have intentionally not included a superscript-$p$ so as not to distinguish among its different possible topologies.
Finally, for later convenience we denote the set of regularized densities by
\begin{equation}
	\label{eq:RegularizedSet}
	\mathbullet{\mathcal{A}}
	\coloneqq
	L^\infty_\theta(\Omega) \cap \operatorname{int} L^\infty_{[0,1]}(\Omega)
	\,,
\end{equation}
within which we will show that every design density iterate of the SiMPL method, \Cref{alg:SiMPL}, evolves (cf.\ \Cref{thm:update-rule}).

\subsection{Outline}
The rest of the text is organized as follows. After briefly introducing further notation and some preliminary results from analysis, we turn to a general optimization framework in \Cref{sec:conti}.
Here, we investigate the effects of the PDE-filter and the regularity of the gradient. In \Cref{sec:mirror-descent}, we derive the SiMPL method. This involves a quick review of mirror descent
and is accompanied by a result about analytical properties of the Fermi--Dirac entropy on $L^p$-spaces, which are essential for the subsequent analysis. With these
tools at hand, we rigorously derive a primal-dual update rule and finally the SiMPL method as a latent variable update rule. Due to the structure of the feasible set and
choice of entropy, we can immediately prove a number of intuitive properties of the primal and latent iterates and connections to a classical KKT-system. This is done in \Cref{sec:prelim-conv}.
Afterwards, in \Cref{sec:adaptive}, we propose to augment SiMPL with
two well-known line search strategies. We motivate the choice of the initial step size for the line searches using the concept of relative smoothness. We close the
theoretical portion of the text with the analysis of the convergence behavior of these two globalized versions of SiMPL. In particular, we prove that SiMPL will generate
sequences of iterates that strictly and monotonically decrease the objective function values as well as prove norm convergence of the increments. We end the technical part of the paper with \Cref{sec:num} by putting the
theory to test in several numerical experiments, where we observe mesh-independent behavior.

Our findings are positive overall and summarized in \Cref{sec:Conclusion}.

\section[A general framework for topology optimization]{A general framework for topology optimization\nopunct}\label{sec:conti}comprises several key components.
The following were described in \Cref{sub:Background} of the introduction:

\begin{enumerate}[leftmargin=2em,itemsep=3pt,topsep=3pt,label=(\roman*)]
\item We solve an optimization problem that provides an optimal density $\rho \in \mathcal{A}$.
\item Practical/mathematical reasons necessitate the use of a filtered density, $\tilde{\rho} = \tilde{\rho}(\rho)$.
\item For well-posedness, we interpolate $\tilde{\rho}$ between a void density $\rho_0>0$ and $1$.
\item The interpolated filtered density controls the solution of a PDE.
\end{enumerate}

Following \cite{lazarov2011-filter}, we consider a filter defined by a screened Poisson equation:
\begin{align}\label{eq:filt-eq}
\begin{split}
-\epsilon^2 \Delta \tilde{\rho} + \tilde{\rho} & =\rho~ \text{ in }\Omega,         \\
\nabla \tilde{\rho}\cdot{n}     & =0~ \text{ on }\partial\Omega.
\end{split}
\end{align}
The constant $\epsilon=r_{\min}/2\sqrt{3}>0$ enforces a length scale to $\tilde{\rho}$ via the parameter $r_{\min}>0$. It follows from standard results on linear elliptic boundary value problems that $\tilde{\rho}$ can be viewed as an bounded linear operator of $\rho$ (e.g., an injection from $L^2(\Omega)$ into $H^1(\Omega)$). We gather additional properties in a lemma below. This operator viewpoint means we do not need to include \eqref{eq:filt-eq} as a constraint in the optimization problem.

We choose the Helmholtz-type filter as it aligns naturally with our functional framework in the optimization process, particularly since we operate in a Sobolev space and aim for high-order discretization.
However, the SiMPL method is not restricted to this choice and can also accommodate other filtering techniques, such as finite-support convolution-based filters, providing sufficient compactness and regularity properties (cf.\ \Cref{lem:filter-bdd} and \Cref{as:complete_continuity}).

We assume throughout that the appropriate physics for the final filtered design $\tilde{\rho}$ follow the well-known equations of linear elasticity.
That is, given a symmetric positive-definite elasticity tensor $\C \colon \mathbb{R}^{d\times d}_{\mathrm{sym}} \to \mathbb{R}^{d\times d}_{\mathrm{sym}}$, an internal body force density
$f \colon \Omega \to \mathbb{R}^d$, and a filtered, interpolated density $r(\tilde{\rho})$, 
we model the displacement $u: \Omega \to \mathbb R^d$ of the final design $\tilde{\rho}$ as the solution of 
\begin{align}\label{eq:lin-elas}
\begin{split}
-\operatorname{div}(r(\tilde{\rho}) \C{\varepsilon}({u})) &= {f}~ \text{ in }\Omega,\\
u &= 0~ \text{ on } \Gamma_0,\\
 (\C{\varepsilon}({u}))\cdot n &= 0~ \text{ on } \partial \Omega \setminus \Gamma_0.
\end{split}
\end{align}
Here, ${\varepsilon}({u}) \coloneqq (\nabla u + (\nabla u)^\top)/2$ is the symmetric gradient,  $\Gamma_0 \subset \partial \Omega$ denotes a subset of the domain boundary $\partial \Omega$ with positive Hausdorff measure, $n$ denotes the unit outward normal vector on $\partial \Omega$, and $r(\cdot)$ is the popular SIMP interpolation model, 
\[
r(\tilde{\rho})=\rho_0 + (1-\rho_0)\tilde{\rho}^s,
\] 
with $0<\rho_0\ll 1$ denoting a minimum void density and $s>1$ denoting the penalization exponent.
Typical parameter values are $\rho_0=10^{-6}$ and $s=3$; cf.~\cite{Bendsoe1995}.

We will consider a reduced space problem formulation, as in, e.g., \cite{Manzoni2021}, by viewing the displacement $u$ as a function of $\rho$. This allows us to also avoid posing the system \eqref{eq:lin-elas} as explicit constraints in the optimization problem. 
In particular, given a sufficiently smooth objective function $\widehat{F}$ that acts on $\rho$, $\tilde{\rho}$, and $u$, we can use the assumptions and observations made above to consider a so-called reduced objective function, written solely as a function of the density $\rho$: namely, $F(\rho) := \widehat{F}(\rho,\tilde{\rho}(\rho),u(\tilde{\rho}(\rho)))$. This allows us to formulate the topology optimization problem in a compact form:
\begin{equation}\label{eq:red-top-opt}
\min_{\rho \in \mathcal{A}} F(\rho)
.
\end{equation}
The following two subsections gather useful results on the screened Poisson filter and the structure of the gradient of the reduced objective $F(\rho)$.

\subsection{Filtering preserves the admissible set}
We find by \cite[Proposition 9.30]{Brezis2011} (in the case of a smooth boundary) or \cite[Theorem 2.6]{Kim2020} (for a general Lipschitz domain), that the filter equation \eqref{eq:filt-eq} preserves the bounds and volume of $\rho$.
\begin{lemma}\label{lem:filter-bdd}
	Let $\Omega\subset\mathbb{R}^d$ be a Lipschitz domain and $g\in L^q(\Omega)$ with $q>d/2$.
	Let $w\in H^1(\Omega)$ be the weak solution to the following equation:
	\begin{align*}
		-\epsilon^2 \Delta w + w =g ~~~ \text{in }\Omega,
		\qquad
		\nabla w\cdot{n} =0 ~~~ \text{on }\partial\Omega.
	\end{align*}
	Then there exists a constant $c>0$, depending only on $\epsilon>0$ and the domain $\Omega$, such that
	\begin{align*}
		\|w\|_{L^\infty(\Omega)} \leq c\|g\|_{L^q(\Omega)}.
	\end{align*}
	Moreover, $\essinf g\leq w \leq \esssup g\text{ a.e.\ in }\Omega$ and $\int_\Omega w\dd x = \int_\Omega g\dd x$.
\end{lemma}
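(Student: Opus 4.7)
The plan is to verify the three claims separately by testing the weak form of the screened Poisson equation with three carefully chosen functions. Mass conservation is immediate: the constant $\varphi \equiv 1$ lies in $H^1(\Omega)$ and is compatible with the Neumann condition, so substituting it into the weak form
\[
\int_\Omega \epsilon^2 \nabla w \cdot \nabla \varphi \dd x + \int_\Omega w \varphi \dd x = \int_\Omega g \varphi \dd x
\]
kills the gradient term and yields $\int_\Omega w \dd x = \int_\Omega g \dd x$.

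For the pointwise bounds I would apply a weak maximum principle by truncation. Setting $M = \esssup g$, the difference $w - M$ solves the equation with right-hand side $g - M \leq 0$. Testing with $v = (w - M)_+ \in H^1(\Omega)$ and noting that $(w - M)v = v^2$ pointwise a.e.\ collapses the weak form to
\[
\int_\Omega \epsilon^2 |\nabla v|^2 \dd x + \int_\Omega v^2 \dd x = \int_\Omega (g - M) v \dd x \leq 0,
\]
forcing $v \equiv 0$ and thus $w \leq M$ a.e.\ in $\Omega$. A symmetric choice $(m - w)_+$ with $m = \essinf g$ gives the lower bound.

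The $L^\infty$ estimate is the technical heart of the lemma, and it is precisely here that the hypothesis $q > d/2$ enters. I would run Stampacchia's truncation method: for $k \geq 0$, testing with $(w-k)_+$ and exploiting coercivity of the bilinear form produces $\|(w-k)_+\|_{H^1(\Omega)}^2 \leq C\int_{A_k} g(w-k)_+ \dd x$, where $A_k = \{w > k\}$. H\"older's inequality on $A_k$ paired with the Sobolev embedding of $H^1(\Omega)$ into an $L^r(\Omega)$ with suitably large $r$ (valid on Lipschitz domains via extension) upgrades this to a super-linear decay inequality of the form $|A_h| \leq C(h-k)^{-\alpha}\|g\|_{L^q}^{\alpha}|A_k|^\beta$, with $\beta > 1$ exactly when $q > d/2$. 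Stampacchia's lemma then forces $|A_k| = 0$ for $k \geq c\|g\|_{L^q}$, and repeating the argument on $-w$ completes the bound.

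The main obstacle is tracking that all constants in the Sobolev embedding, the Poincar\'e-type inequalities, and the underlying elliptic regularity depend only on $\epsilon$ and $\Omega$ on a merely Lipschitz domain; these quantitative dependencies are precisely the content of the cited results in \cite[Proposition 9.30]{Brezis2011} and \cite[Theorem 2.6]{Kim2020}, which is why the statement can be attributed to them rather than reproduced in full.
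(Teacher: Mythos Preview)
Your proposal is correct, and in fact it goes well beyond what the paper does: the paper offers no proof at all for this lemma, instead simply citing \cite[Proposition~9.30]{Brezis2011} (smooth boundary) and \cite[Theorem~2.6]{Kim2020} (Lipschitz boundary) in the sentence preceding the statement. Your three-part argument---testing with the constant $1$ for mass conservation, Stampacchia truncation with $(w-M)_+$ for the pointwise bounds, and the level-set iteration for the $L^\infty$ estimate---is exactly the standard machinery one would find upon unpacking those references, and you correctly identify that $q>d/2$ is what forces the exponent $\beta>1$ in the iteration. There is nothing to compare methodologically; you have supplied the argument the paper chose to outsource.
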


\Cref{lem:filter-bdd} shows that $\rho \in \mathcal{A}$ implies $\tilde\rho \in \mathcal{A}$.
In particular, we are guaranteed that $\essinf \tilde\rho \geq 0$, thus $r(\tilde{\rho}) \geq \rho_0 > 0$ a.e.\ in $\Omega$.
This implies that the linear elasticity equation~\cref{eq:lin-elas} is strongly elliptic for the given boundary conditions and admits a unique weak solution ${u}\in V\coloneqq\{{v}\in [H^1(\Omega)]^d: {v}|_{\Gamma_0}=0\}$.
At the discrete level, the maximum principle may not hold, particularly when high-order approximations are used; cf.\ \Cref{rem:filter-dmp}.
\subsection{The gradient is essentially bounded}

We require the gradient $\nabla F$ to be essentially bounded to guarantee that the iterates of~\Cref{alg:SiMPL} belong to $\mathbullet{\mathcal{A}}$; cf.~\Cref{thm:update-rule}.
This can be achieved under mild assumptions on the domain $\Omega$ and data $f$.

The following lemma is immediate from the chain rule; see \cite[Appendix A]{simplapp}.
\begin{subequations}
\begin{lemma}\label{lem:gradF}
	For a given design density $\rho\in \mathcal{A}$, let $\tilde{\rho}\in H^1(\Omega)$ and $u \in V$ be the corresponding solutions to \cref{eq:filt-eq,eq:lin-elas}, resectively.
  If we assume that $\widehat{F}(\tilde{\rho},u)$ is Fr\'{e}chet differentiable with respect to $\tilde{\rho} \in H^1(\Omega)$ and $u\in [H^1(\Omega)]^d$,
	then the reduced objective function $F$ is Fr\'{e}chet differentiable in $L^\infty(\Omega)$ and its derivative can be characterized by the variational equation
	\begin{equation}
	\label{eq:frechet_derivative}
		\iprod{F'(\rho)}{q} =
		\int_\Omega \nabla F(\rho) q \dd x ~\text{ for all } q\in L^\infty(\Omega)
		\,,
	\end{equation}
	where $\nabla F(\rho) := \tilde{\lambda}\in H^1(\Omega)$ solves the adjoint filter equation
	\begin{multline}\label{eq:adj_filter}
		\int_\Omega \big(\epsilon^2\nabla\tilde{\lambda}\cdot\nabla\tilde{q} + \tilde{\lambda}\tilde{q}\big)\dd x
		=
		\Big\langle \frac{\partial}{\partial {\tilde{\rho}}}\widehat{F}(\tilde{\rho},u), \tilde{q} \Big\rangle
		-
		\int_\Omega r'(\tilde{\rho}){\varepsilon}({\lambda}):(\C{\varepsilon}({u}))\,\tilde{q}\dd x
		\\~\text{ for all } \tilde{q}\in H^1(\Omega)
		\,,
	\end{multline}
	and ${\lambda}\in V$ solves the adjoint state equation
	\begin{equation}\label{eq:adj}
		\int_\Omega r(\tilde{\rho})(\C{\varepsilon}({v})):{\varepsilon}({\lambda})\dd x
		=
		\Big\langle \frac{\partial}{\partial {u}}\widehat{F}(\tilde{\rho},u), v \Big\rangle
		~\text{ for all } {v}\in V
		\,.
	\end{equation}

\end{lemma}
\end{subequations}

\Cref{lem:gradF} shows that the Fr\'echet derivative $F'(\rho)$ admits a primal representative $\nabla F(\rho) \in H^1(\Omega)$ at each point $\rho \in \mathcal{A}$.
	Under sufficient regularity assumptions, we can further conclude that
	\begin{equation}
		\nabla F(\rho) \in L^\infty(\Omega)
		\,.
	\end{equation}
	Indeed, following \cite[Theorem 2.1]{Bensoussan2002}, if $\Omega$ satisfies the exterior sphere condition \cite{Bensoussan2002} and $\partial_{{u}}\widehat{F}, f\in L^2(\Omega)$, then there exists $\delta > 0$ that depends only on $\Omega$ and $f$, such that
  \begin{align*}
    {u},{\lambda}\in [W^{1,2 + \delta}(\Omega)]^d\,.
  \end{align*}
	Furthermore, provided  $\delta>d-2$, \cite[Theorem 2.6]{Bensoussan2002} implies that $\tilde{\lambda}\in L^\infty(\Omega)$ and, in particular, satisfies the following estimate:
	\begin{align*}
    \|\tilde{\lambda}\|_{L^\infty(\Omega)}\leq c\|{\varepsilon}({u})\|_{L^{1+\delta/2}(\Omega)}\|{\varepsilon}({\lambda})\|_{L^{1+\delta/2}(\Omega)}\,.
	\end{align*}
	When $d=2$, $\delta>0$ is enough to ensure that $\tilde{\lambda}\in L^\infty(\Omega)$.
  When $d=3$, we need $\delta > 1$ and hence require stronger assumptions on the domain and data, e.g., convexity of $\Omega$ and $f,\partial_u\widehat{F}\in L^2(\Omega)$.

\section{Deriving SiMPL}\label{sec:mirror-descent}
In this section, we rigorously derive the SiMPL method for topology optimization.

At its core, SiMPL is a two-step algorithm composed of an unconstrained gradient update followed by a volume correction; cf.~\Cref{alg:SiMPL}.
In the gradient update step, one finds a bound-preserving intermediate density in $\operatorname{int} L^\infty_{[0,1]}(\Omega)$.
This intermediate density is then projected to the admissible set, resulting in a volume-corrected density in the $L^\infty(\Omega)$-relative interior of $\mathcal{A}$; i.e., $\mathbullet{\mathcal{A}}$.

The outline of this section is as follows. We briefly review the classical mirror descent method, from which we ultimately derive SiMPL. 
This is followed by a rigorous analysis of the Fermi--Dirac entropy on various $L^p$-spaces. We then derive a primal-dual update rule for the density $\rho$ 
and Lagrange multiplier $\mu$ associated with the equality (volume) constraint and finally, after introducing the latent variable, we arrive at SiMPL. The benefits of 
using a latent variable from the perspective of using higher order discretizations are discussed at the end of the section.

\subsection{Bregman divergences and mirror descent}
The mirror descent method of Nemirovksij and Yudin \cite{nemirovskij1983problem} seeks to minimize a convex Lipschitz function over a reflexive Banach space by first constructing an update rule viewed as a mirror trajectory on the dual space.
This mirror trajectory is then mapped back to the primal space, leading to a nonlinear update rule for the decision variable. 
Our setting forces us to work in non-reflexive spaces and allows us construct a mirror trajectory on a latent function space.
However, the philosophy of SiMPL is indebted to the original mirror descent viewpoint.

The modern interpretation, e.g., \cite{Chen1993}, is to view mirror descent as a proximal gradient algorithm in which the proximal term is given by a Bregman divergence that aptly captures the geometry of the feasible set and/or domain of the objective functional.
For this reason, we recall some basic properties of Bregman divergences here.
The connection back to the original idea of Nemirovskij and Yudin is through the distance generating functional (entropy) used to define
the Bregman divergence.

Let $X$ be a Banach space.
The Bregman divergence \cite{Bregman1967TheRM} corresponding to an essentially smooth, essentially strictly convex function $\varphi \colon X \to \mathbb{R}\cup\{+\infty\}$ is defined
\begin{equation}
	D_\varphi(\rho,q) \coloneqq \varphi(\rho) - \varphi(q) - \iprod{\varphi'(\rho)}{\rho - q},
\end{equation}
for all $\rho \in \operatorname{dom}\varphi$ and $q \in \operatorname{int}\operatorname{dom}\varphi$.

Given that it captures the error in the first-order Taylor series expansion of $\varphi$, the Bregman divergence may be viewed as a generalization of a squared distance.

\begin{proposition}[Properties of Bregman divergences \cite{Chen1993}]
	\label{prop:Bregman}
	Let $\varphi \colon X \to \mathbb{R}\cup\{+\infty\}$ be essentially smooth and essentially strictly convex.

	The Bregman divergence induced by $\varphi$ satisfies the following properties:
	\begin{itemize}[leftmargin=1.5em]
		\item (Positivity) $D_\varphi(\rho,q)\geq 0$ for all $(\rho,q)\in \operatorname{dom}\varphi\times \operatorname{int} \operatorname{dom}\varphi$, where the equality holds iff $\rho=q$.
		\item (Convexity) $D_\varphi(\rho,q)$ is essentially strictly convex in its first argument.
		      Moreover, if $\varphi$ is strongly convex, then so is the map $\rho \mapsto D_\varphi(\rho,q)$.
	\end{itemize}

\end{proposition}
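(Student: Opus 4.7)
The overall approach is to treat the two bullets separately and reduce each to a classical first-order property of $\varphi$, exploiting the observation that for fixed $q \in \operatorname{int}\operatorname{dom}\varphi$ the map $\rho \mapsto D_\varphi(\rho,q)$ differs from $\varphi(\rho)$ only by an affine function of $\rho$.

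For positivity, I would invoke the first-order characterization of convexity at the interior point $q$. Essential smoothness of $\varphi$ ensures that the Fr\'echet derivative $\varphi'(q)$ exists on $\operatorname{int}\operatorname{dom}\varphi$, and convexity of $\varphi$ then provides the supporting-hyperplane inequality
\[
\varphi(\rho) \geq \varphi(q) + \iprod{\varphi'(q)}{\rho - q}
\qquad\text{for every } \rho \in \operatorname{dom}\varphi,
\]
which, after rearrangement, yields $D_\varphi(\rho,q) \geq 0$. To pin down the equality case, I would restrict attention to the line segment from $q$ to $\rho$; this segment lies in $\operatorname{dom}\varphi$ by convexity and meets $\operatorname{int}\operatorname{dom}\varphi$ at $q$. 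Essential strict convexity of $\varphi$ then forces the supporting-hyperplane inequality to be strict whenever $\rho \neq q$, so that $D_\varphi(\rho,q) = 0$ implies $\rho = q$.

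For the convexity bullet, I would simply observe that, for fixed $q$, the term $-\varphi(q) - \iprod{\varphi'(q)}{\rho - q}$ is affine in $\rho$. Since adding an affine function to a convex function preserves both essential strict convexity and the strong convexity modulus, the corresponding convexity properties of $\varphi$ transfer immediately to $D_\varphi(\cdot,q)$.

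The main subtlety, rather than a deep obstacle, is bookkeeping around the definitions of \emph{essentially smooth} and \emph{essentially strictly convex} in the sense of Rockafellar in a non-reflexive Banach space setting and verifying that the strict-convexity step along $[q,\rho]$ remains valid when $\rho$ lies on the boundary of the effective domain. Once those technicalities are in place, the proposition follows with no further work, consistent with its cataloguing as a standard fact in \cite{Chen1993}.
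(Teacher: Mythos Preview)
Your argument is correct and is exactly the standard one: nonnegativity follows from the supporting-hyperplane inequality at the differentiability point $q$, strictness of the equality case from essential strict convexity along the segment $[q,\rho]$, and the convexity in the first argument from the fact that $D_\varphi(\cdot,q)$ is $\varphi$ plus an affine perturbation. The paper does not actually supply a proof of this proposition; it is stated with attribution to \cite{Chen1993} and used as a black box, so there is nothing further to compare.
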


\subsection[The Fermi--Dirac entropy on Lp-spaces]{The Fermi--Dirac entropy on $L^p$-spaces}
In this work, we are interested in a specific function $\varphi \colon L^p(\Omega)\rightarrow\mathbb{R}\cup\{+\infty\}$ known as the (negative) Fermi--Dirac entropy,
\begin{equation}
	\label{eq:Fermi-Dirac}
	\varphi(\rho)=\begin{cases}
		\displaystyle
		\int_\Omega \rho\ln \rho + (1-\rho)\ln(1-\rho) \dd x & \text{ when }\rho\in L^p_{[0,1]}(\Omega), \\
		+\infty                                              & \text{ otherwise}.
	\end{cases}
\end{equation}
Here and throughout, we use the convention $0\ln0=0$.
Our interest in the Fermi--Dirac entropy lies in the fact that the principal bound constraint of density-based topology optimization, i.e., $0 \leq \rho \leq 1$ a.e.\ in $\Omega$, is encoded in its effective domain.
The following lemma lists several properties of $\varphi$.

\begin{lemma}[Properties of the Fermi--Dirac entropy]\label{lem:fermi-dirac}

	The Fermi--Dirac entropy, $\varphi \colon L^p(\Omega)\to \mathbb{R}\cup \{+\infty\}$, 
	is:
	\begin{enumerate}[leftmargin=2em]
		\item lower semicontinuous and strictly convex on $L^p_{[0,1]}(\Omega)$ for all $p\in [1,\infty]$;
		\item strongly convex on $L^1_{[0,1]}(\Omega)$;
		\item continuous on $L^p_{[0,1]}(\Omega)$ for all $p\in (1,\infty]$;
		\item continuously Fr\'{e}chet differentiable on $\operatorname{int} L^\infty_{[0,1]}(\Omega)$.
		      Moreover,
		      \begin{align*}
			      \langle \varphi^\prime(\rho), v \rangle = \int_\Omega \sig^{-1}(\rho) v \dd x ~\text{ for all } \rho \in \operatorname{int} L^\infty_{[0,1]}(\Omega),~ v\in L^1(\Omega)
			      \,,
		      \end{align*}
		      where $\sig:\mathbb{R}\rightarrow(0,1)$ is the (logistic) sigmoid function $\sig(\psi)=1/(1+\exp(-\psi))$ with the inverse $\sig^{-1}(\rho) = \ln(\rho/(1-\rho))$.

\end{enumerate}
\end{lemma}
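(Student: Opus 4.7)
The overall plan is to reduce every claim to corresponding properties of the scalar generator $h(t) \coloneqq t\ln t + (1-t)\ln(1-t)$ on $[0,1]$ (with the convention $0 \ln 0 = 0$) and then lift them to the integral functional $\varphi$. I will use three elementary facts about $h$ throughout: it is continuous on $[0,1]$ with $|h| \leq \ln 2$; it is smooth on $(0,1)$ with $h'(t) = \ln(t/(1-t)) = \sig^{-1}(t)$; and its second derivative satisfies $h''(t) = 1/(t(1-t)) \geq 4$.

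For (1), strict convexity of $\varphi$ is immediate from pointwise strict convexity of $h$. For lower semicontinuity on $L^p_{[0,1]}$, given $\rho_n \to \rho$ in $L^p$ with all iterates admissible, I would use a subsequence-of-subsequence argument: from any subsequence extract a further a.e.-convergent one and apply Fatou's lemma to the nonnegative integrand $h(\rho_{n_k}) + \ln 2$ to obtain $\varphi(\rho) \leq \liminf_k \varphi(\rho_{n_k})$. For (2), I would establish the pointwise Bernoulli Pinsker-type inequality $h(a) - h(b) - h'(b)(a-b) \geq 2(a-b)^2$ on $[0,1]$ from $h'' \geq 4$ via Taylor's theorem with integral remainder; integrating then yields $D_\varphi(\rho_1, \rho_2) \geq 2\|\rho_1 - \rho_2\|_{L^2}^2$, and one application of Cauchy-Schwarz (using $|\Omega| < \infty$) delivers strong convexity in $L^1$ with modulus at least $4/|\Omega|$. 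For (3), the same subsequence trick as in (1), combined with the dominated convergence theorem using the constant dominator $\ln 2$, promotes lower semicontinuity to full continuity on $L^p_{[0,1]}$ whenever $p > 1$.

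For (4), fix $\rho \in \operatorname{int} L^\infty_{[0,1]}$ so that $\delta \leq \rho \leq 1-\delta$ a.e.\ for some $\delta > 0$. For $v \in L^\infty(\Omega)$ with $\|v\|_{L^\infty} < \delta/2$, the perturbed density $\rho + v$ stays in $[\delta/2, 1-\delta/2]$, where $h''$ is bounded by some constant $C(\delta)$. The integral remainder form of Taylor's theorem then yields
\[
\left| \varphi(\rho + v) - \varphi(\rho) - \int_\Omega \sig^{-1}(\rho)\,v \,\dd x \right| \leq \tfrac{C(\delta)}{2}\,|\Omega|\,\|v\|_{L^\infty}^2 = o(\|v\|_{L^\infty}),
\]
which shows that $\varphi$ is Fréchet differentiable at $\rho$ with derivative represented by the $L^\infty$ function $\sig^{-1}(\rho)$; the latter membership also justifies the extended pairing against $v \in L^1(\Omega)$ in the lemma statement. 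Continuous differentiability then follows from uniform continuity of $\sig^{-1}$ on $[\delta/2, 1-\delta/2]$, since the map $\rho \mapsto \sig^{-1}(\rho)$ becomes Lipschitz from small $L^\infty$-neighborhoods of $\rho$ into $L^\infty$. I expect the most delicate step to be (2): strong convexity in the non-reflexive $L^1$ norm cannot be read off directly from the generator's Hessian, which naturally yields only $L^2$ strong convexity; the approach above bridges the gap via Cauchy-Schwarz at the cost of a modulus scaling like $|\Omega|^{-1}$, and recovering the sharper constant implicit in \cite{Borwein1991} would require a more refined argument tailored to the Bernoulli divergence.
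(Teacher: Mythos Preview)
Your argument is correct and self-contained, but it differs in spirit from the paper's one-line proof. The paper does not work directly with the scalar generator $h$; instead it writes $\varphi(\rho) = \hat\varphi(\rho) + \hat\varphi(1-\rho) + 1$, where $\hat\varphi(\rho) = \int_\Omega (\rho\ln\rho - \rho)\,\dd x$ is the negative Shannon entropy, and then simply invokes known results for $\hat\varphi$: \cite[Theorem~4.1]{keith2023proximal} for items (1), (3), (4), and \cite[Theorem~2.7]{Borwein1991} for the $L^1$ strong convexity in (2). Your route is more elementary and transparent---everything is reduced to Taylor expansion and the bound $h'' \geq 4$---and would be preferable in a text aiming to be self-contained. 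The paper's route is shorter and, crucially for (2), imports the Pinsker-type inequality from \cite{Borwein1991}, which yields an $L^1$ strong convexity modulus independent of $|\Omega|$; your Cauchy--Schwarz bridge from $L^2$ to $L^1$ is correct but costs the factor $|\Omega|^{-1}$, exactly as you anticipated. Incidentally, your DCT argument in (3) with the constant dominator $\ln 2$ actually works for $p=1$ as well, so you prove slightly more than the lemma claims.
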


\begin{proof}
	The lemma is immediate from \cite[Theorem 4.1]{keith2023proximal}, \cite[Theorem 2.7]{Borwein1991} and $\varphi(\rho) = \hat{\varphi}(\rho) + \hat{\varphi}(1-\rho) + 1$, where $\hat{\varphi}$ is the negative differential Shannon entropy $\hat{\varphi}(\rho) \coloneqq \int_\Omega \rho \ln \rho - \rho \dd x$.
\end{proof}

Among other things, this lemma makes it straightforward to show that the Bregman divergence associated to the Fermi--Dirac entropy is
\begin{align*}
	D_\varphi(\rho,q) = \int_\Omega \rho\ln\frac{\rho}{q} + (1-\rho)\ln\frac{1-\rho}{1-q} \dd x
	\,,
\end{align*}
for all $\rho \in L^p_{[0,1]}(\Omega)$ and $q \in \operatorname{int} L^\infty_{[0,1]}(\Omega)$.
We conclude this subsection with the following abstract result employing~\Cref{lem:fermi-dirac}.
This result provides the theoretical justification for an essential building block of SiMPL.

\begin{lemma}
	\label{thm:entropy-projection}
	Let ${A} \colon L^1(\Omega) \to \mathbb{R}$ be a bounded linear functional, $\alpha > 0$ a constant, $\rho \in \operatorname{int} L^\infty_{[0,1]}(\Omega)$ fixed, and $\mathcal{C}$ any non-empty, closed, convex subset of $L^1_{[0,1]}(\Omega)$.

	Then the following problem has a unique solution:
	\begin{equation}
		\label{eq:shift-entropy-projection}
		\min_{q \in \mathcal{C}}
		\left\{
		J(q) \coloneqq
		\langle A, q \rangle
		+ \alpha^{-1} D_{\varphi}(q,\rho)
		\right\}.
	\end{equation}
\end{lemma}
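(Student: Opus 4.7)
The plan is to apply the direct method of the calculus of variations. First I would verify that $J$ is proper on $\mathcal{C}$: since $\rho \in \operatorname{int} L^\infty_{[0,1]}(\Omega)$, there exists $\delta > 0$ with $\delta \leq \rho \leq 1-\delta$ a.e., so $\sig^{-1}(\rho) = \ln(\rho/(1-\rho)) \in L^\infty(\Omega)$ and $\ln\rho, \ln(1-\rho)$ are essentially bounded. Any $q_0 \in \mathcal{C}$ satisfies $0 \leq q_0 \leq 1$ a.e., so a direct pointwise estimate of the integrand $q_0\ln(q_0/\rho) + (1-q_0)\ln((1-q_0)/(1-\rho))$ using these bounds gives $D_\varphi(q_0,\rho) < \infty$; together with $|\iprod{A}{q_0}| \leq \|A\| |\Omega|$ this yields $J(q_0) < \infty$. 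The same linear-functional bound and $D_\varphi(\cdot,\rho) \geq 0$ from \Cref{prop:Bregman} show that $J$ is bounded below on $\mathcal{C}$.

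Next I would take a minimizing sequence $(q_n) \subset \mathcal{C}$. Since $0 \leq q_n \leq 1$ a.e.\ and $|\Omega| < \infty$, the sequence is bounded in every $L^p(\Omega)$, $p \in [1,\infty]$; in particular, it is bounded in $L^2(\Omega)$. By reflexivity of $L^2$, a subsequence satisfies $q_{n_k} \rightharpoonup q^\star$ in $L^2$, and since $L^\infty(\Omega) \subset L^2(\Omega)$ when $|\Omega|<\infty$, this upgrades to weak convergence in $L^1$ (testing against $L^\infty$ functions). The set $\mathcal{C}$ is convex and norm-closed in $L^1$, hence weakly closed by Mazur's theorem, so $q^\star \in \mathcal{C} \subset L^1_{[0,1]}(\Omega)$.

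To close the argument, I need $J$ to be weakly sequentially lower semicontinuous along $(q_{n_k})$. The linear term $\iprod{A}{\cdot}$ is handled by the definition of weak $L^1$-convergence. For the Bregman term I would rewrite
\[
D_\varphi(q,\rho) = \varphi(q) - \varphi(\rho) - \int_\Omega \sig^{-1}(\rho)(q - \rho) \dd x,
\]
and note that the last integral is weakly $L^1$-continuous in $q$ because $\sig^{-1}(\rho) \in L^\infty(\Omega)$, while $\varphi$ is convex and lower semicontinuous on $L^1_{[0,1]}(\Omega)$ by \Cref{lem:fermi-dirac}, hence weakly lsc on this convex set by Mazur's theorem. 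Therefore $J(q^\star) \leq \liminf_k J(q_{n_k}) = \inf_\mathcal{C} J$, so $q^\star$ is a minimizer. Uniqueness then follows from strict convexity: $D_\varphi(\cdot,\rho)$ is strictly convex by \Cref{prop:Bregman} (inherited from strict convexity of $\varphi$ on $L^1_{[0,1]}$), and adding the linear functional $\iprod{A}{\cdot}$ preserves strict convexity.

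The main obstacle is the non-reflexivity of the natural ambient space $L^1(\Omega)$: one cannot directly extract a weakly convergent subsequence from an arbitrary bounded sequence there. The workaround is precisely the pointwise bound built into $\mathcal{C} \subset L^1_{[0,1]}(\Omega)$, which allows lifting the compactness argument to the reflexive space $L^2(\Omega)$, while the essential boundedness of $\sig^{-1}(\rho)$ (a consequence of $\rho$ lying in the $L^\infty$-interior of $L^\infty_{[0,1]}(\Omega)$) keeps the Taylor-remainder representation of $D_\varphi$ weakly $L^1$-lsc.
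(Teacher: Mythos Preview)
Your proof is correct and follows the same direct-method template as the paper: extract a weakly convergent minimizing subsequence, invoke weak lower semicontinuity of $J$, and conclude uniqueness from strict convexity. The one substantive difference is the source of compactness. The paper stays in $L^1(\Omega)$ throughout and appeals to the Dunford--Pettis theorem, noting that $L^1_{[0,1]}(\Omega)$ is uniformly integrable and hence weakly sequentially compact in $L^1$; it then absorbs the bounded terms $\ln\rho,\,\ln(1-\rho)$ into the linear functional to reduce to $J(q)=\langle A,q\rangle+\varphi(q)$ before citing weak lower semicontinuity and $L^1$-strong convexity of $\varphi$. You instead exploit the pointwise bound $0\le q_n\le 1$ to lift the sequence into the reflexive space $L^2(\Omega)$, extract a weak $L^2$-limit, and then observe that weak $L^2$-convergence against $L^\infty$ test functions is exactly weak $L^1$-convergence. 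Your route is more elementary in that it avoids Dunford--Pettis, at the cost of momentarily leaving the $L^1$ setting; the paper's route keeps the analysis native to the non-reflexive space, which aligns with its emphasis on handling that setting directly. Either way the remaining steps (Mazur for weak closedness of $\mathcal{C}$, weak lsc of $\varphi$ from \Cref{lem:fermi-dirac}, strict convexity for uniqueness) are identical in spirit.
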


\begin{proof}
	By definition, we have
	\begin{align*}
		J(q) = \langle A, q \rangle + \alpha^{-1}\int_\Omega q\ln q-q\ln \rho+(1-q)\ln(1-q)-(1-q)\ln(1-\rho)\dd x .
	\end{align*}
	Since $\ln \rho$, $\ln(1-\rho) \in L^{\infty}(\Omega)$ we can assume $\alpha = 1$, shift the terms $-\int_\Omega q\ln \rho\dd x$ and $\int_\Omega q\ln(1-\rho)\dd x$ into $\langle A, q \rangle$, and ignore the constant $-\int_\Omega \ln(1-\rho)\dd x$.
	Thus, without loss of generality, we choose to rewrite
	\(
	J(q) = \langle A, q \rangle + \varphi(q).
	\)

	We can readily show that the set of functions $L^1_{[0,1]}(\Omega)$ is uniformly integrable. Then by the Dunford--Pettis theorem \cite[Theorem~2.4.5]{attouch2014variational}, $L^1_{[0,1]}(\Omega)$ is weakly sequentially compact in $L^1(\Omega)$.
	The same conclusion can be made for $\mathcal{C}$ in the hypothesis.

Let $\{q_k\} \subset \mathcal{C}$ be a minimizing sequence of $J$.
	Weak sequential compactness implies that the sequence $\{q_k\}$ admits a subsequence that is $L^1(\Omega)$-weakly convergent to some $\overline{q} \in \mathcal{C}$.
	Moreover, by \Cref{lem:fermi-dirac}, $J$ is weakly lower semicontinuous and $L^1(\Omega)$-strongly convex.
	Thus, we can use \cite[Theorem~3.2.1]{attouch2014variational} and strong convexity to deduce that $\overline{q}$ is the unique optimal solution.
\end{proof}

\subsection{A primal-dual update rule}
Let us recall the update rule for the projected gradient descent method with step size $\alpha > 0$.
Beginning from an initial guess $\rho_0 \in L^2_{[0,1]}(\Omega)$, this method constructs a minimizing sequence of design densities by optimizing a penalized first-order approximation of $F$ at each previous iteration:
\begin{align*}
	\rho_{k+1} \coloneqq \argmin_{\rho\in \mathcal{A}}\bigg\{\int_\Omega \nabla F(\rho_k)\rho \dd x + \frac{1}{2\alpha}\|\rho-\rho_k\|_{L^2(\Omega)}^2\bigg\}
	\,,
	\quad
	k = 0,1,2,\ldots
\end{align*}
Solving these subproblems is equivalent to applying an update rule
\begin{equation}\label{eq:pgd-org}
	\rho_{k+1} = \mathcal{P}_{L^2}(\rho_k - \alpha\nabla F(\rho_k)),
\end{equation}
from which the term projected gradient descent derives.
More specifically, $\mathcal{P}_{L^2}:L^2(\Omega)\rightarrow \mathcal{A}$ is a canonical $L^2(\Omega)$-projection operator such that for all $v\in L^2(\Omega)$
\begin{subequations}\label{eq:L2proj}
	\begin{align}
		\mathcal{P}_{L^2}(v)=\max(0, \min(1, v - \mu))\text{ a.e. in }\Omega,
		\intertext{ where $\mu\in\mathbb{R}$ solves the volume correction equation }
		\int_\Omega \max(0, \min(1, v - \mu))\dd x=\theta|\Omega|.
	\end{align}
\end{subequations}
Denoting the argument of the projection operator in~\eqref{eq:pgd-org} by $\rho_{k+1/2}$, this update rule can be decomposed into an unconstrained gradient descent step followed by projection onto the admissible set:
\begin{subequations}\label{eq:pgd}
	\begin{align}
		\rho_{k+1/2} & = \rho_k - \alpha \nabla F(\rho_k), \\
		\rho_{k+1}   & = \mathcal{P}_{L^2}(\rho_{k+1/2}).
	\end{align}
\end{subequations}

By replacing the squared $L^2$ distance $\frac{1}{2}\|\cdot\|_{L^2(\Omega)}^2$ with a Bregman divergence, we obtain a more general update rule:
\begin{equation}
	\label{eq:mirror-descent-minimizer}
	\rho_{k+1}=\argmin_{\rho\in \mathcal{A}}\Big\{J_k(\rho) \coloneqq \int_\Omega \nabla F(\rho_k)\rho \dd x + \frac{1}{\alpha}D_\varphi(\rho, \rho_k)\Big\}.
\end{equation}
In our setting, $\varphi$ is the Fermi--Dirac entropy introduced above. 
The following theorem establishes the existence of each $\rho_{k+1}$ along with an explicit update formula.

\begin{theorem}[Primal-dual update rule]\label{thm:update-rule}
	Let $\rho_k \in \mathbullet{\mathcal{A}}$ and $\alpha > 0$. Suppose $\varphi$ is the Fermi--Dirac entropy and 
  \Cref{lem:gradF} holds such that the primal representation $\nabla F\colon \mathcal{A} \to L^\infty(\Omega)$ exists.
	Then $\rho_{k+1}$ in \eqref{eq:mirror-descent-minimizer} is uniquely determined.
	Furthermore, we have $\rho_{k+1} \in \mathbullet{\mathcal{A}}$ and
	\begin{subequations}\label{eq:mirror-update-rule}
		\begin{align}
			\label{eq:dual-update}
			\sig^{-1}(\rho_{k+1})=\sig^{-1}(\rho_k)-\alpha\nabla F(\rho_k)-\mu_{k+1},
			\intertext{where, $\mu_{k+1}\in \mathbb{R}$ solves the volume correction equation,}
			\label{eq:volume-correction}
			\int_\Omega \sig(\sig^{-1}(\rho_k)-\alpha \nabla F(\rho_k)-\mu_{k+1})\dd x = \theta|\Omega|.
		\end{align}
	\end{subequations}
\end{theorem}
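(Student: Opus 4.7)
My plan is to prove the theorem by explicit construction: first establish existence of the scalar $\mu_{k+1}$, then define a candidate density $\bar{\rho}$ via the claimed sigmoidal formula, verify it is admissible, and finally show it is the unique minimizer of $J_k$ over $\mathcal{A}$, which by uniqueness from \Cref{thm:entropy-projection} must coincide with $\rho_{k+1}$.

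The uniqueness half is immediate: I would invoke \Cref{thm:entropy-projection} with $\mathcal{C} = \mathcal{A} \subset L^1_{[0,1]}(\Omega)$ (nonempty, closed, and convex, since the constant $\rho \equiv \theta$ belongs to $\mathcal{A}$) and with the functional $A\colon L^1(\Omega)\to\mathbb{R}$ defined by $\iprod{A}{q} = \int_\Omega \nabla F(\rho_k)\,q \dd x$, whose $L^1(\Omega)$-boundedness follows from $\nabla F(\rho_k)\in L^\infty(\Omega)$. This yields the unique minimizer $\rho_{k+1}$.

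For the explicit formula, I would next produce $\mu_{k+1}$. Since $\rho_k\in\mathbullet{\mathcal{A}}$, the function $\eta \coloneqq \sig^{-1}(\rho_k) - \alpha\nabla F(\rho_k)$ lies in $L^\infty(\Omega)$. Consider the scalar map $g(\mu)\coloneqq \int_\Omega \sig(\eta+\mu)\dd x$. Strict pointwise monotonicity of $\sig$ makes $g$ strictly increasing, and dominated convergence (bounding by $1$) yields continuity together with $\lim_{\mu\to-\infty}g(\mu)=0$ and $\lim_{\mu\to+\infty}g(\mu)=|\Omega|$. The intermediate value theorem then supplies a unique $\mu_{k+1}\in\mathbb{R}$ with $g(\mu_{k+1})=\theta|\Omega|$, which is precisely the volume-correction equation. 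I would then set $\bar{\rho}\coloneqq\sig(\eta+\mu_{k+1})$; since $\eta+\mu_{k+1}\in L^\infty(\Omega)$, the values of $\bar{\rho}$ are bounded strictly away from $0$ and $1$, so $\bar{\rho}\in\mathbullet{\mathcal{A}}$. To verify $\bar{\rho}$ is the minimizer, I would use that $\bar{\rho}\in\operatorname{int}L^\infty_{[0,1]}(\Omega)$ together with \Cref{lem:fermi-dirac} to obtain $\varphi'(\bar{\rho})=\sig^{-1}(\bar{\rho})\in L^\infty(\Omega)$. Then $J_k$ is Fr\'echet differentiable at $\bar{\rho}$, and for any $q\in\mathcal{A}$ the directional pairing reads
\[
  \iprod{J_k'(\bar{\rho})}{q-\bar{\rho}} = \int_\Omega \Bigl[\nabla F(\rho_k) + \alpha^{-1}\bigl(\sig^{-1}(\bar{\rho})-\sig^{-1}(\rho_k)\bigr)\Bigr](q-\bar{\rho})\dd x.
\]
By construction $\sig^{-1}(\bar{\rho})-\sig^{-1}(\rho_k) = -\alpha\nabla F(\rho_k) + \mu_{k+1}$, so the bracket collapses to the constant $\mu_{k+1}/\alpha$. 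The shared volume $\int_\Omega(q-\bar{\rho})\dd x = 0$ then forces $\iprod{J_k'(\bar{\rho})}{q-\bar{\rho}}=0$, and convexity of $J_k$ upgrades this to $J_k(\bar{\rho})\leq J_k(q)$ for every $q\in\mathcal{A}$. Hence $\bar{\rho}=\rho_{k+1}$, establishing both the explicit formula and membership in $\mathbullet{\mathcal{A}}$.

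The main obstacle is a conceptual one: because the bound constraint $0\leq\rho\leq 1$ is encoded \emph{implicitly} through the effective domain of $\varphi$, the derivative $J_k'$ need not exist at boundary points, and a direct KKT derivation of the update would require simultaneously handling the subdifferential of $\varphi$ and the normal cone of the volume constraint, with extra work to rule out active bound constraints. The construction above sidesteps this entirely: producing a candidate inside the relative interior where $\varphi'$ is classical, and then using convexity of $J_k$ plus the uniqueness from \Cref{thm:entropy-projection} to conclude, without ever touching the boundary of $\operatorname{dom}\varphi$.
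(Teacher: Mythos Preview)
Your proof is correct and takes a genuinely different route from the paper's.

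The paper first invokes \Cref{thm:entropy-projection} for existence and uniqueness (as you do), but then proceeds in the opposite order: it argues by contradiction that the abstract minimizer $\rho_{k+1}$ must lie in $\operatorname{int}L^\infty_{[0,1]}(\Omega)$. This requires verifying Robinson's constraint qualification to obtain a Lagrange multiplier $\mu_{k+1}$, testing the saddle-point inequality against the truncation $\rho_\epsilon = \max\{\epsilon,\min\{1-\epsilon,\rho_{k+1}\}\}$, and carrying out a careful pointwise estimate of $\varphi(\rho_\epsilon)-\varphi(\rho_{k+1})$ to reach a contradiction for small $\epsilon$. Only after interiority is secured does the paper write down the first-order optimality condition, localize via an $L^\infty$-ball, and read off the pointwise formula. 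Your construction sidesteps all of this: you build the interior candidate $\bar\rho$ directly via the intermediate value theorem, check that $J_k'(\bar\rho)$ is a constant annihilated by the volume constraint, and let convexity plus uniqueness finish the identification. This is shorter and avoids both the constraint-qualification machinery and the technical boundary estimate. The paper's approach has the modest advantage of \emph{deriving} the sigmoidal formula from first principles rather than positing it, and its contradiction step yields the quantitative distance-to-boundary bound recorded in \Cref{rem:eps-bound}; however, your construction recovers an equivalent bound immediately, since $\bar\rho=\sig(\eta+\mu_{k+1})$ with $\eta+\mu_{k+1}\in L^\infty(\Omega)$ confines $\bar\rho$ to the interval $[\sig(-M),\sig(M)]$ with $M=\|\eta+\mu_{k+1}\|_{L^\infty(\Omega)}$.
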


\begin{proof}
	The existence and uniqueness of $\rho_{k+1} \in \mathcal{A}$ is immediate from \Cref{thm:entropy-projection} and $\nabla F(\rho_k)\in L^\infty(\Omega)$.

It remains to prove that $\rho_{k+1}\in \mathbullet{\mathcal{A}} \subsetneq \mathcal{A}$; \cref{eq:mirror-update-rule} follows from the corresponding first-order optimality conditions.

\noindent{\textsl{Step 1}}. We claim that $\rho_{k+1}\in \mathbullet{\mathcal{A}}$.
	We use standard optimization theory and Lagrangian duality to prove this claim, e.g., \cite[Chap. 3.1]{Bonnans2000}. 

Recall that $\mathcal{A} = L^{\infty}_{[0,1]}(\Omega) \cap L^{\infty}_{\theta}(\Omega)$.
	In the current setting, Robinson's constraint qualification (RCQ) amounts to proving that the image of the mapping 
	$G(\rho) := (\rho,1) - \theta|\Omega|$ for $\rho \in L^{\infty}_{[0,1]}(\Omega)$ contains an open interval 
	around zero. Since the minimum value in the range is $G(0) = - \theta|\Omega| < 0$,
	the maximum value in the range is $G(1) = (1-\theta) |\Omega| > 0$, and $G$ is continuous we conclude that
	RCQ holds. Letting $\mathcal{L}(\rho,\mu) := \alpha J_k(\rho) - \mu( (\rho,1) - \theta |\Omega|)$, we deduce that 
	there exists a Lagrange multiplier  $\mu_{k+1}\in \mathbb{R}$ such that 
	\begin{equation}
		\label{eq:SaddlePointMinimizer}
		\mathcal{L}(\rho_{k+1},\mu_{k+1})\leq \mathcal{L}(\rho,\mu_{k+1})~\text{ for all } \rho\in L^{\infty}_{[0,1]}(\Omega).
	\end{equation}

We are left to suppose that $\rho_{k+1}\not\in \operatorname{int} L^\infty_{[0,1]}(\Omega)$.
	By setting $\rho=\rho_\epsilon:=\max\{\epsilon,\min\{1-\epsilon,\rho_{k+1}\}\}\in \operatorname{int} L^\infty_{[0,1]}(\Omega)$ with $0<\epsilon<1/e$, we have from the definition of $\mathcal{L}$ and $\rho_\epsilon$ and \Cref{lem:fermi-dirac} that
	\begin{equation}\label{eq:interior-aux}
		0\leq \int_{\Omega_\epsilon}\Big(\alpha\nabla F(\rho_k) + \mu_{k+1}-\sig^{-1}(\rho_k)\Big)(\rho_\epsilon-\rho_{k+1})\dd x + \varphi(\rho_\epsilon)-\varphi(\rho_{k+1})
		.
	\end{equation}
	Here, $\Omega_\epsilon$ is the union of $\Omega_{\epsilon-}:=\{x\in\Omega:\rho_{k+1}(x)<\epsilon\}$ and $\Omega_{\epsilon+}:=\{x\in\Omega:\rho_{k+1}(x)>1-\epsilon\}$.
	We now rewrite the last two terms on the right-hand side as follows:
	\begin{align*}
		\begin{aligned}
			\varphi(\rho_\epsilon)-\varphi(\rho_{k+1}) & =\int_{\Omega_\epsilon}\Big(\epsilon\ln \epsilon-\rho_{k+1}\ln \rho_{k+1}\Big)\dd x                       \\
			                                           & \quad +\int_{\Omega_\epsilon}\Big((1-\epsilon)\ln(1-\epsilon)-(1-\rho_{k+1})\ln (1-\rho_{k+1})\Big)\dd x.
		\end{aligned}
	\end{align*}
	Let us observe that $x\ln x>(1-x)\ln(1-x)>\epsilon\ln\epsilon$ for all $x\in (1-\epsilon,1)$ when $0<\epsilon<1/e$.
	This shows that the integrand of the first term on the right-hand side is negative on $\Omega_{\epsilon+}$.
	Noticing the symmetricity, we can also show that the integrand of the second term on the right-hand side is negative on $\Omega_{\epsilon-}$.
	These give us the upper bound
	\begin{equation}\label{eq:entropy-diff}
		\begin{aligned}
			\varphi(\rho_\epsilon)-\varphi(\rho_{k+1}) & \leq\int_{\Omega_{\epsilon-}}\Big(\epsilon\ln \epsilon-\rho_{k+1}\ln \rho_{k+1}\Big)\dd x                    \\
			                                           & \quad +\int_{\Omega_{\epsilon+}}\Big((1-\epsilon)\ln(1-\epsilon)-(1-\rho_{k+1})\ln (1-\rho_{k+1})\Big)\dd x.
		\end{aligned}
	\end{equation}

We now focus on the first term on the right-hand side of~\cref{eq:entropy-diff}.
	Applying the mean value theorem and observing that $(y\ln y)' = \ln y + 1$ is an increasing function, we obtain
	\begin{multline*}
		\int_{\Omega_{\epsilon-}}\Big(\epsilon\ln\epsilon - \rho_{k+1}\ln\rho_{k+1}\Big)
		\\
		\leq \int_{\Omega_{\epsilon-}}(\epsilon-\rho_{k+1})(\ln \epsilon + 1)\dd x
		=(\ln\epsilon + 1)\|\rho_{\epsilon}-\rho_{k+1}\|_{L^1(\Omega_{\epsilon-})}.
	\end{multline*}
	The last term in \eqref{eq:entropy-diff} can be bounded similarly, and hence \eqref{eq:entropy-diff} becomes
	\begin{align*}
		\varphi(\rho_\epsilon)-\varphi(\rho_{k+1})\leq (\ln \epsilon + 1)\|\rho_\epsilon - \rho_{k+1}\|_{L^1(\Omega_\epsilon)}.
	\end{align*}

This and H\"{o}lder's inequality give the following upper bound on \cref{eq:interior-aux}:
	\begin{equation}
	\label{eq:epsilon_bounds}
		0\leq \Big(\|\alpha\nabla F(\rho_k) + \mu_{k+1}-\sig^{-1}(\rho_k)\|_{L^\infty(\Omega)}+\ln\epsilon + 1\Big)\|\rho_\epsilon-\rho_{k+1}\|_{L^1(\Omega_{\epsilon})}.
	\end{equation}
	Clearly, the right-hand side becomes negative for sufficiently small $\epsilon$ unless $\Omega_{\epsilon}$ has zero measure, which is forbidden by the assumption $\rho_{k+1}\not\in \operatorname{int} L^\infty_{[0,1]}(\Omega)$.
	We therefore conclude that $\rho_{k+1} \in \operatorname{int} L^\infty_{[0,1]}(\Omega)$.

\noindent{\textsl{Step 2}}. We now derive the update formulae~\cref{eq:mirror-update-rule}.
	\Cref{lem:fermi-dirac} tells us that $D_\varphi(\cdot,\rho_k)$ is differentiable at $\rho_{k+1}$ because $\rho_{k+1}\in \operatorname{int} L^\infty_{[0,1]}(\Omega)$.
	We may now write out the first optimality condition for~\cref{eq:SaddlePointMinimizer}: namely,
	\begin{equation}\label{eq:first-order-optimality}
		\int_\Omega \big(\alpha \nabla F(\rho_k) + \mu_{k+1} + \sig^{-1}(\rho_{k+1})-\sig^{-1}(\rho_k)\big)(q-\rho_{k+1})\dd x\geq 0
	\end{equation}
	for all $q\in L^{\infty}_{[0,1]}(\Omega)$.
	Using again that $\rho_{k+1}\in \operatorname{int}L^\infty_{[0,1]}(\Omega)$, there exists an $L^\infty(\Omega)$-open ball of some radius $\delta > 0$ such that $B(\rho_{k+1};\delta)\subset L^{\infty}_{[0,1]}(\Omega)$.
	Taking any point $q\in B(0;\delta)$ and testing with $\rho = \rho_{k+1} \pm q$ in~\cref{eq:first-order-optimality}, we deduce that
	\begin{equation}
		\label{eq:VariationalEquality}
		\int_\Omega \big(\alpha \nabla F(\rho_k) + \mu_{k+1} + \sig^{-1}(\rho_{k+1})-\sig^{-1}(\rho_k)\big)q\dd x = 0
	\end{equation}
	for all $q\in B(0;\delta)$.

In fact,~\cref{eq:VariationalEquality} holds for all $q\in L^\infty(\Omega)$ because $B(0;\delta)$ is an absorbing set.

	We now use the fact that $C^\infty_{\mathrm{c}}(\Omega) \subset L^\infty(\Omega)$ and the vanishing integral theorem to conclude that
	\[
		\alpha \nabla F(\rho_k) + \mu_{k+1} + \sig^{-1}(\rho_{k+1})-\sig^{-1}(\rho_k) = 0
		~~
		\text{a.e.\ in } \Omega.
	\]
	Rearranging terms and using the fact that $\sig^{-1} \colon (0,1)\rightarrow\mathbb{R}$ is strictly monotonic and surjective, 
	we can thus express $\rho_{k+1}\in \mathbullet{\mathcal{A}}$ as follows:
	\begin{equation}\label{eq:update-rho}
		\rho_{k+1}=\sig\big(\sig^{-1}(\rho_k)-\alpha\nabla F(\rho_k)-\mu_{k+1}\big).
	\end{equation}

The proof is completed by noting that the volume constraint $\int_\Omega \rho_{k+1}\dd x=\theta|\Omega|$ is now equivalent to the volume correction equation \cref{eq:volume-correction}, which determines the unique Lagrange multiplier in
	\cref{eq:SaddlePointMinimizer}.
\end{proof}

It is a straightforward exercise to verify that the volume correction~\cref{eq:volume-correction} is equivalent to finding the Bregman projection $\mathcal{P}_\varphi:\operatorname{int}L^\infty_{[0,1]}(\Omega)\rightarrow {\mathcal{A}}$ such that
\begin{align*}
	\mathcal{P}_\varphi(q)=\argmin_{\rho\in{\mathcal{A}}}D_\varphi(\rho,q).
\end{align*}
This allows us to rewrite \eqref{eq:mirror-update-rule} as
\begin{subequations}\label{eq:pmd}
	\begin{align}
		\rho_{k+1/2} & =\sigma(\sigma^{-1}(\rho_k)-\alpha\nabla F(\rho_k)), \\
		\rho_{k+1}   & =\mathcal{P}_\varphi(\rho_{k+1/2}),
	\end{align}
\end{subequations}
revealing the key structural similarity with the projected gradient method \eqref{eq:pgd}.

We note that the projection $\mathcal{P}_\varphi$ is a smooth operator in contrast to $\mathcal{P}_{L^2}$ in~\cref{eq:L2proj}.

\begin{remark}\label{rem:eps-bound}
Note that~\cref{eq:epsilon_bounds} in the proof of \cref{thm:update-rule} gives a rather concrete bound on the distance $\epsilon=\epsilon(k)$ to the bounds at the $(k+1)$-th iteration:
\[
0 < \epsilon \le \exp(-1 - \|\alpha\nabla F(\rho_k)+\mu_{k+1}-\sig^{-1}(\rho_k)\|_{L^\infty(\Omega)}).
\]
After introducing the latent iterates $\psi_k$ below, we can write this equivalently as 
$
0 < \epsilon \le \exp(-1 - \|\psi_{k+1}\|_{L^\infty(\Omega)}).
$
As we show below, $\psi_{k}$ becomes unbounded on the active sets as $k \to +\infty$, which indicates that $\rho_k$ will tend rapidly to the
bounds on the active sets.
\end{remark}

\subsection{SiMPL: A latent variable update rule}
\label{sub:LVMD}
Ideally, the optimal designs $\rho^{\star}$ that solve \eqref{eq:red-top-opt} will be binary over $\Omega$. At the very least, we expect there will exist subsets of 
positive Lebesgue measure on which $\rho^\star=0$ or $1$. This presents a problem for the primal-dual update rule~\cref{eq:pmd} as the discretized 
version of \eqref{eq:pmd} may suffer from two types of numerical instability: $\sigma^{-1}(x) = \ln(x) - \ln(1-x)$ for $x$ near $0$ and $1$, and  oscillations
due to the method of discretization.

In \cite{keith2023proximal}, the authors introduced a latent variable $\psi=\varphi^\prime(\rho)=\sig^{-1}(\rho)$ to remedy these issues. This leads to an update 
rule for the latent variables of the form:
\begin{subequations}\label{eq:latent-update}
	\begin{align}
	\label{eq:latent-update-eq1}
		\psi_{k+1/2} & =\psi_k-\alpha_k\nabla F(\rho_k), \\
	\label{eq:latent-update-eq2}
		\psi_{k+1}   & =\psi_{k+1/2}-\mu_{k+1},\\
    \intertext{where $\mu_{k+1}$ solves}
	\label{eq:latent-update-eq3}
    \int_\Omega \sig(\psi_{k+1/2}&-\mu_{k+1})\dd x=\theta|\Omega|.
	\end{align}
\end{subequations}
Using the latent variable $\psi$ effectively removes the unstable logarithmic term from the algorithm.
Moreover, we are free to use both high-order polynomial bases to discretize the associated function space for $\psi_k$. Whenever needed, we can always transform the discrete approximation 
$\psi_{k,h}$ into a density $\rho_{k,h} := \sigma(\psi_{k,h})$. This has significant advantages as illustrated in \Cref{fig:gibbs}. Here, we see that while the latent variable $\psi_{k,h}$ still exhibits the 
typical Runge phenomenon when attempting to approximate a binary function with a high-degree polynomial, such oscillations become imperceptible under the sigmoidal transformation
because $\sigma(x)$ approaches to $0$ or $1$ exponentially as $x\rightarrow \pm\infty$.
\begin{figure}
	\centering
  \includegraphics[width=0.37\textwidth]{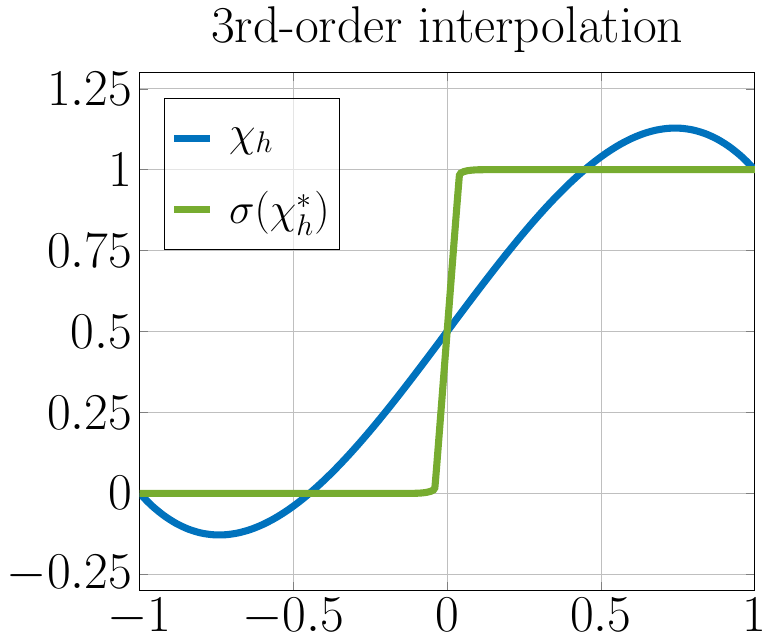}
  \includegraphics[width=0.37\textwidth]{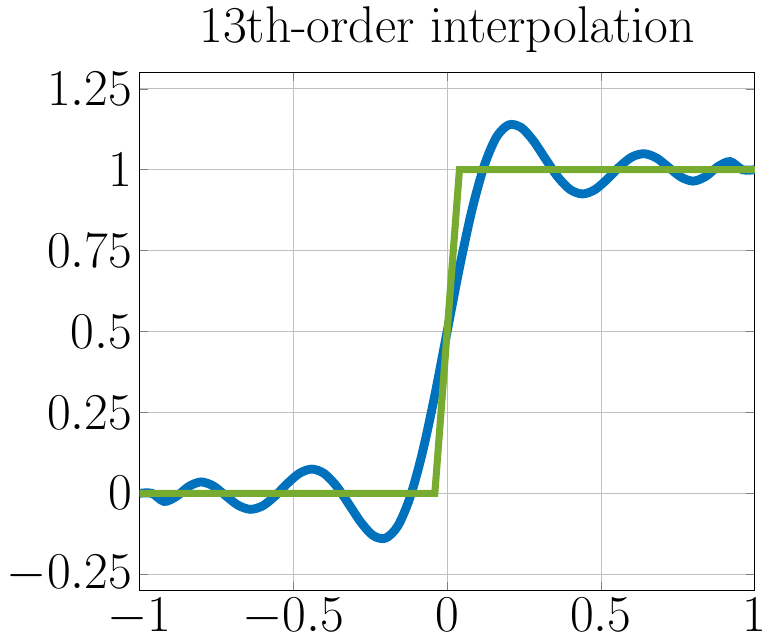}
	\caption{
		Interpolation of a characteristic function $\chi_{\{x>0\}}$ with 3rd (left) and 13th (right) order polynomials.
		$\chi_h$ is the direct interpolation, and $\chi^*_h$ is the interpolation in the latent space.
		We set $\chi^*_{h}= \min\{40, \max\{-40, \sig^{-1}(\chi_{\{x>0\}})\}\}$ to have a finite $\chi^*_{h}$.
		The $\sig(\chi_{h})$ used in SiMPL, shows a sharp interface without oscillation.
		The cut-off interpolation here is used only for demonstration, not in the optimization process.
		\label{fig:gibbs}}
\end{figure}

\begin{remark}\label{rem:mu-solve}
	The volume projection~\cref{eq:latent-update-eq3} is equivalent to solving the scalar nonlinear equation \eqref{eq:volume-correction}.
  We advocate for using a scalar root-finding method, such as Brent's method \cite{Brent1971} or the Illinois algorithm (a modified \textit{regula falsi} method, \cite{Dowell1971}), for maximum robustness when solving this equation.
	\Cref{lem:LM_bounds}, given in the following section, provides finite and computable upper and lower bounds that can be used to specify the corresponding search interval for $\mu_{k+1}$.

\end{remark}

\section{Preliminary convergence theory}\label{sec:prelim-conv}
We can draw a number of interesting conclusions about the convergence behavior of SiMPL before considering the step size $\alpha_k>0$ at each iteration.
We gather these observations here. The subsequent sections of the paper will introduce step size strategies and globalization schemes leading to the algorithms SiMPL-A and SiMPL-B. We pose the following assumption for convenience.
\begin{assumption}\label{as:seq-gen}
	$\widehat{F} : H^1(\Omega) \times L^{\infty}(\Omega) \to \mathbb R$ is Fr\'echet differentiable and the reduced functional $F$ has a Fr\'echet derivative that admits a primal representation $\nabla F$ such that  $\nabla F\colon \mathcal{A} \to L^\infty(\Omega)$.
\end{assumption}

\subsection{Convergence of the iterates} 
\label{sub:convergence_of_the_iterates}

Given a sequence of positive scalars $\{\alpha_k\}$, the sequence of densities $\left\{\rho_k\right\}$ and Lagrange multipliers $\left\{\mu_k\right\}$ is generated by the primal update rule \eqref{eq:mirror-update-rule}.
In what follows $\chi_{\mathcal{B}}$ is the characteristic function of a measurable set $\mathcal{B} \subset \Omega$.
\begin{proposition}[Convergence of $\rho_k$]\label{prop:prelim-rho-conv}
	Let \Cref{as:seq-gen} hold.
	Then $\left\{\rho_k\right\}$ admits an index set $K_{\rm w} \subset \mathbb{N}$ and weakly convergent subsequence $\left\{\rho_{k}\right\}_{k \in K_{\rm w}}$ in $L^p(\Omega)$ for $p \in [1, \infty)$ with limit point $\rho^{\star} \in \mathcal{A}$.
	Moreover, for the active sets,
	\[
		\Omega_{1} := \left\{ x \in \Omega \left| \; \rho^{\star}(x) = 1 \right.\right\}~ \text{ and }~
		\Omega_{0} := \left\{x \in \Omega \left| \; \rho^{\star}(x) = 0 \right.\right\},
	\]
	the sequences $\left\{(1 - \rho_{k})\chi_{\Omega_1}\right\}_{k \in K_{\rm w}}$ and  $\left\{\rho_{k}\chi_{\Omega_0}\right\}_{k \in K_{\rm w}}$ converge in measure to zero.
	Finally, there exists an index set $K_{\rm pn} \subset K_{\rm w}$ such that $\left\{(1 - \rho_{k})\chi_{\Omega_1}\right\}_{k \in K_{\rm pn}}$ and  $\left\{ \rho_{k} \chi_{\Omega_0}\right\}_{k \in K_{\rm pn}}$ converge to zero both pointwise and strongly in $L^p(\Omega)$, for every $p \in [1,\infty)$.
\end{proposition}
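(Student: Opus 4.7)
The plan is to exploit the uniform $L^\infty$-bound on the iterates together with the sign structure of $1-\rho_k$ on $\Omega_1$ and $\rho_k$ on $\Omega_0$ to upgrade weak convergence on these active sets to strong $L^p$-convergence. First, I observe that \Cref{thm:update-rule} guarantees $\rho_k \in \mathbullet{\mathcal{A}} \subset \mathcal{A}$ for all $k$, so $0 \leq \rho_k \leq 1$ a.e.\ in $\Omega$ and $\int_\Omega \rho_k\,dx = \theta|\Omega|$. This yields uniform boundedness in $L^\infty(\Omega)$; by the Banach--Alaoglu theorem, using separability of $L^1(\Omega)$, one extracts a subsequence $\{\rho_k\}_{k\in K_{\rm w}}$ converging weak-$\ast$ to some $\rho^*\in L^\infty(\Omega)$. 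Since $L^{p'}(\Omega)\subset L^1(\Omega)$ for every $p'\in[1,\infty]$ on the bounded domain $\Omega$, this weak-$\ast$ convergence delivers weak convergence in every $L^p(\Omega)$, $p\in[1,\infty)$, simultaneously. Convexity and norm-closedness of $\mathcal{A}$, together with preservation of the volume constraint by testing the weak limit against the constant function $1$, then give $\rho^*\in\mathcal{A}$.

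Next, I test the weak convergence against $\chi_{\Omega_1}\in L^\infty(\Omega)$ to obtain
\[
\int_\Omega (1-\rho_k)\chi_{\Omega_1}\,dx = |\Omega_1| - \int_{\Omega_1}\rho_k\,dx \;\to\; |\Omega_1| - \int_{\Omega_1}\rho^*\,dx = 0,
\]
and analogously $\int_\Omega \rho_k\chi_{\Omega_0}\,dx \to 0$. Because both integrands are nonnegative a.e., these integral convergences are in fact strong $L^1(\Omega)$-convergences to zero, which immediately yield convergence in measure of the corresponding sequences.

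Finally, a standard measure-theoretic result extracts from any $L^1$-null sequence a pointwise a.e.\ convergent subsequence; a diagonal extraction handles both $(1-\rho_k)\chi_{\Omega_1}$ and $\rho_k\chi_{\Omega_0}$ simultaneously on a common index set $K_{\rm pn}\subset K_{\rm w}$. Since both sequences are dominated by $1\in L^p(\Omega)$, the Dominated Convergence Theorem upgrades pointwise a.e.\ convergence on $K_{\rm pn}$ to $L^p$-convergence for every $p\in[1,\infty)$. I do not anticipate a substantive obstacle here: the crucial observation is simply that the sign constraint on the active sets turns weak $L^1$-information into strong $L^1$-information, after which the remaining upgrades are routine.
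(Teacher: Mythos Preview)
Your proof is correct and follows the same overall strategy as the paper: extract a weakly convergent subsequence from boundedness, test the weak limit against $\chi_{\Omega_1}$ and $\chi_{\Omega_0}$ to see that the integrals $\int_{\Omega_1}(1-\rho_k)\,dx$ and $\int_{\Omega_0}\rho_k\,dx$ vanish, and then pass to pointwise and $L^p$ convergence via a subsequence and dominated convergence.

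There are two minor technical differences worth noting. First, for the compactness step the paper invokes reflexivity of $L^p$ for $1<p<\infty$ and the Dunford--Pettis theorem separately for $p=1$, whereas you use sequential weak-$\ast$ compactness in $L^\infty$ (Banach--Alaoglu plus separability of $L^1$) and then observe that weak-$\ast$ convergence in $L^\infty$ automatically yields weak convergence in every $L^p$ on a bounded domain; your route handles all $p$ at once. Second, the paper reaches convergence in measure via Markov's inequality applied to $\int_\Omega|(1-\rho_k)\chi_{\Omega_1}|\,dx$, while you observe directly that nonnegativity of the integrand makes this quantity equal to the $L^1$-norm, so you obtain strong $L^1$-convergence on $K_{\rm w}$ immediately (and convergence in measure as a consequence). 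Your observation is in fact slightly sharper than what the statement asks for, since it shows $L^p$-convergence already holds along all of $K_{\rm w}$, not just $K_{\rm pn}$; the further subsequence is needed only for the pointwise a.e.\ claim.
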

\begin{proof}
	By construction, $\left\{\rho_k\right\} \subset \mathcal{A}$. More specifically, $0 \le \rho_k\le 1$ and $\int_{\Omega} \rho_k \ \mathrm{d}x = \theta |\Omega|$ for all $k \in \mathbb N$. Hence, $\left\{\rho_k\right\}$ is bounded in every $L^p$-space $p \in [1,\infty]$.
	Consequently, there exists a weakly convergent subsequence $\left\{\rho_{k_l}\right\}$ in $L^p(\Omega)$ for any $1 < p < \infty$ with weak limit point $\rho^{\ast}$. For $p =1$ we appeal to the Dunford-Pettis theorem as used in the proof of \Cref{thm:entropy-projection}. The weak limit point $\rho^{\ast}$ belongs to $\mathcal{A}$ since norm closed convex sets are also weakly closed. We denote the indices for the weakly convergent subsequence by $k \in K_{\rm w}$.

	According to \Cref{thm:update-rule}, $0 < \rho_{k} < 1$ for all $k$. Recall then from Markov's inequality that for any $\varepsilon > 0$ we have
	\[
		\left| \left\{x \in \Omega \left| |(1 - \rho_k)\chi_{\Omega_1}| \ge \varepsilon \right.\right\}\right| \le \frac{1}{\varepsilon} \int_{\Omega}|\chi_{\Omega_1}(1 - \rho_k)| \, \mathrm{d}x.
	\]
	Passing to $k \in K_{\rm w}$, the upper bound yields
	\[
		\frac{1}{\varepsilon} \int_{\Omega}|(1 - \rho_{k})\chi_{\Omega_1}| \, \mathrm{d}x = \frac{1}{\varepsilon} \int_{\Omega_1} (1 - \rho_{k}) \, \mathrm{d}x = \frac{1}{\epsilon}(|\Omega_1| - (\rho_{k},\chi_{\Omega_1})) \to 0.
	\]
	Hence, $(1 - \rho_{k})\chi_{\Omega_1} \to 0$ in measure (along the subsequence defined by $k \in K_{\rm w}$).  The same argument holds by symmetry for $\rho_k\chi_{\Omega_0} $ with $k \in K_{\rm w}$.

	Finally, the remaining assertions follow from the fact that the Lebesgue measure is $\sigma$-finite, which implies the existence of a subset of indices $K_{\rm pn} \subset K_{\rm w}$ such that  $\left\{(1 - \rho_{k})\chi_{\Omega_1}\right\}_{k \in K_{\rm pn}}$ and  $\left\{ \rho_{k}\chi_{\Omega_0}\right\}_{k \in K_{\rm pn}}$ converge pointwise to zero, whereas the norm convergence follows from  the dominated convergence theorem due to the pointwise uniform bounds.
\end{proof}
\Cref{prop:prelim-rho-conv} provides us with some deeper intuition about the convergence behavior of the iterates, even without an update strategy for $\alpha_k$. The following corollary provides a stronger convergence statement provided $\rho^{\star}$ is binary-valued (as observed in our numerical experiments).

\begin{corollary}[Convergence of $\rho_k$ for binary-valued $\rho^{\star}$]
  Let \Cref{as:seq-gen} hold. Suppose $\Omega = \Omega_{1} \cup \Omega_{0}$ holds up to a set of measure zero, i.e., $\rho^{\star}$ is a binary design. Then $\{\rho_{k}\}_{k \in K_{\rm pn}}$ strongly converges in $L^p(\Omega)$ to $\rho^{\star}$ for any $p \in [1,\infty)$.
\end{corollary}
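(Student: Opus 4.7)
The plan is to leverage the strong $L^p$ convergence of the two activity-indicator sequences already established in \Cref{prop:prelim-rho-conv}, together with the new hypothesis that $\Omega$ is exhausted (up to a null set) by the two limiting active sets. Under this additional assumption, there is no region of positive measure on which $0 < \rho^\star < 1$, so the error $\rho_k - \rho^\star$ decomposes cleanly over $\Omega_1$ and $\Omega_2$ and can be controlled purely by the two pieces that \Cref{prop:prelim-rho-conv} already provides.

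Concretely, the hypothesis $|\Omega \setminus (\Omega_1 \cup \Omega_2)| = 0$ combined with the definitions of $\Omega_1$ and $\Omega_2$ forces $\rho^\star = \chi_{\Omega_1}$ almost everywhere on $\Omega$. Thus for a.e.\ $x \in \Omega$ one has the pointwise identity
\[
\rho_k(x) - \rho^\star(x) = -(1-\rho_k(x))\chi_{\Omega_1}(x) + \rho_k(x)\chi_{\Omega_2}(x).
\]
Taking $L^p$-norms and applying the triangle inequality yields
\[
\|\rho_k - \rho^\star\|_{L^p(\Omega)} \leq \|(1-\rho_k)\chi_{\Omega_1}\|_{L^p(\Omega)} + \|\rho_k\chi_{\Omega_2}\|_{L^p(\Omega)}.
\]
By \Cref{prop:prelim-rho-conv}, both terms on the right-hand side tend to zero along $k \in K_{\rm pn}$ for every $p \in [1,\infty)$, which delivers the desired strong convergence.

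I do not anticipate a genuine obstacle: the result is a direct corollary that merely assembles the two strong-$L^p$ convergence statements of the preceding proposition. The only point requiring care is the identification $\rho^\star = \chi_{\Omega_1}$ almost everywhere, where the binary-valued hypothesis is used; without it, an "interior" region of intermediate values could persist and would require a separate argument (e.g., involving the latent variables $\psi_k$ discussed in \Cref{rem:eps-bound}) to control.
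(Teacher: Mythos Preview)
Your proposal is correct and follows essentially the same route as the paper: both arguments use the pointwise decomposition $|\rho_k - \rho^\star| \le |(1-\rho_k)\chi_{\Omega_1}| + |\rho_k\chi_{\Omega_2}|$ a.e.\ in $\Omega$ and then invoke the strong $L^p$ convergence of the two pieces from \Cref{prop:prelim-rho-conv}. Your version is slightly more explicit in first identifying $\rho^\star = \chi_{\Omega_1}$ a.e., but the underlying idea is identical.
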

\begin{proof}
	This follows from \Cref{prop:prelim-rho-conv} in light of the fact that $|\rho_{k} - \rho^{\star}| \le |\chi_{\Omega_1}(\rho_{k} - 1)| + |\chi_{\Omega_0}\rho_k|$ holds almost everywhere on $\Omega$.
\end{proof}

Similar to the two previous results, we can prove some intuitive properties about the convergence behavior of the latent variables.
The latent variables do not live in the dual space to $\rho$, in contrast to the original mirror descent algorithm \cite{nemirovskij1983problem}.
In fact, the natural setting for $\psi$ is a metric space of extended real-valued functions whose metric is induced by $\sigma$ and an $L^p$-norm under the convention that $\sigma(+\infty) = 1$ and $\sigma(-\infty) = 0$.
In particular, the distance between two latent variables $\psi, \varphi$ can be measured by the metrics $d_{p}(\psi,\varphi) := \| \sigma(\psi) - \sigma(\varphi) \|_{L^p(\Omega)}$, for any $p \in [1,\infty]$.

\begin{proposition}[Convergence of $\psi_k$]\label{prop:psi-con}
	Let \Cref{as:seq-gen} hold and let $\left\{\psi_k\right\}$ be the sequence of latent variables associated to $\left\{\rho_k\right\}$. Define $\psi^\star := \sigma^{-1}(\rho^{\star})$, where
	$\psi^{\star} = +\infty$ where $\rho^{\star} = 1$ and $\psi^{\star} = -\infty$ where $\rho^{\star} = 0$.
	Then the sequence $\left\{\bar{\psi}_k\right\}_{k \in K_{\rm pn}}$ with
	\[
		\bar{\psi}_k(x) :=
		\begin{cases}
			\psi_k(x)       & \text{ if } x \in \Omega_1 \cup \Omega_0, \\
			\psi^{\star}(x) & \text{ else,}
		\end{cases}
	\]
	tends to $\psi^{\star}$ in the metric $d_{p}$ for any $p \in [1,\infty)$. If $\rho^{\star}$ is binary-valued, then $\left\{\psi_{k}\right\}_{k \in K_{\rm pn}}$ converges in $d_{p}$ to $\psi^{\star}$.
\end{proposition}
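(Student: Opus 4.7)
The plan is to reduce the claim directly to \Cref{prop:prelim-rho-conv} by unwinding the definition of the metric $d_p$. Under the convention $\sig(+\infty)=1$, $\sig(-\infty)=0$, the extended-real-valued $\psi^\star$ satisfies $\sig(\psi^\star) = \rho^\star$ a.e.\ on $\Omega$. I would decompose $\Omega$ (up to null sets) into $\Omega_1 \cup \Omega_0 \cup \Omega_\star$, where $\Omega_\star := \{x\in\Omega : 0 < \rho^\star(x) < 1\}$. By the very definition of $\bar{\psi}_k$, since $\bar{\psi}_k$ coincides with $\psi^\star$ on $\Omega_\star$, the contribution from $\Omega_\star$ vanishes exactly, and we have pointwise a.e.
\[
\sig(\bar{\psi}_k)(x) - \sig(\psi^\star)(x) = (\rho_k(x) - 1)\chi_{\Omega_1}(x) + \rho_k(x)\chi_{\Omega_0}(x).
\]

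Next, I would raise both sides to the $p$-th power and integrate over $\Omega$. Because the sets $\Omega_1$, $\Omega_0$, and $\Omega_\star$ are pairwise disjoint, this yields the clean identity
\[
d_p(\bar{\psi}_k, \psi^\star)^p = \|(1-\rho_k)\chi_{\Omega_1}\|_{L^p(\Omega)}^p + \|\rho_k\chi_{\Omega_0}\|_{L^p(\Omega)}^p.
\]
By \Cref{prop:prelim-rho-conv}, along the subsequence indexed by $K_{\rm pn}$ both $(1-\rho_k)\chi_{\Omega_1}$ and $\rho_k\chi_{\Omega_0}$ converge to zero strongly in $L^p(\Omega)$ for every $p \in [1,\infty)$. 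Consequently, $d_p(\bar{\psi}_k, \psi^\star) \to 0$, which is the first assertion.

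For the binary-valued case, $|\Omega_\star| = 0$, so $\bar{\psi}_k = \psi_k$ on a set of full measure, which implies $\sig(\bar{\psi}_k) = \sig(\psi_k)$ a.e., and therefore $d_p(\psi_k, \psi^\star) = d_p(\bar{\psi}_k, \psi^\star) \to 0$. I do not anticipate a substantial obstacle in this argument; the only subtle point is verifying that $\psi^\star$ is a well-defined element of the quotient space $\Psi$ despite taking the values $\pm\infty$ on $\Omega_1 \cup \Omega_0$. This is fine because $\sig$ is constant on the preimages of $\pm\infty$, so the equivalence relation defining $\Psi$ is respected and the pointwise identities above are measure-theoretically well-posed. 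In essence, the proposition is a direct reinterpretation of \Cref{prop:prelim-rho-conv} through the bijective correspondence $\rho = \sig(\psi)$.
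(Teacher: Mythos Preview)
Your proposal is correct and follows essentially the same approach as the paper's proof: both reduce the claim to \Cref{prop:prelim-rho-conv} via the pointwise identity $\sig(\bar{\psi}_k) - \sig(\psi^\star) = (\rho_k - \rho^\star)\chi_{\Omega_1 \cup \Omega_0}$ and then invoke the $L^p$-convergence of $(1-\rho_k)\chi_{\Omega_1}$ and $\rho_k\chi_{\Omega_0}$ along $K_{\rm pn}$. Your version is slightly more detailed (explicitly separating $\Omega_1$ and $\Omega_0$ and writing the $d_p$ identity), but the argument is the same.
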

\begin{proof}
	The assertion is a simple consequence of \Cref{prop:prelim-rho-conv} given that
	for almost every $x \in \Omega$, any $p \in [1,\infty)$, and any $k \in K_{\rm pn}$, we have
	\[
		|\sigma(\psi_k) - \sigma(\psi^{\star})|^p =
		|(\sigma(\psi_k) - \sigma(\psi^{\star}))\chi_{\Omega_1 \cup \Omega_0}|^p =
		|(\rho_{k} - \rho^{\star})\chi_{\Omega_1\cup\Omega_0}|^p.
	\]
\end{proof}

\begin{remark}
	To understand the space which endowed with the metric $d_p$, we let $\Psi$ be the set of all extended real-valued functions on $\Omega$ such that $\psi \in \Psi$ implies $\sigma(\psi) \in L^p(\Omega)$.
	We define an equivalence class $\sim$ such that $f \sim g$ for $f,g \in \Psi$ if $\sigma(f) = \sigma(g)$ almost everywhere on $\Omega$.
	The latent variable space is then the quotient space $\Psi' := \Psi/\sim$, which we endow with the metric topology given by $d_{p}$.
\end{remark}

\subsection{Towards a KKT system} 
\label{sub:towards_a_kkt_system}

We now seek to derive a preliminary KKT-type optimality system using some additional assumptions on the types of convergence that are largely guaranteed by the globalization strategies in the following section. This concerns both weak and pointwise convergence of the Lagrange multiplier estimators as well as the parameter $\alpha_k$.

In the sequel, we let
\begin{equation}
	\label{eq:lagrange}
	\lambda_{k+1}
	=
	(\psi_{k+1}-\psi_{k})/\alpha_k
\end{equation}
denote approximate Lagrange multipliers for the bound constraints.
We begin by showing that both the multipliers for the volume constraint $\int_\Omega \rho \dd x = \theta |\Omega|$ and bound constraints $0 \leq \rho \leq 1$ a.e.\ in $\Omega$, i.e., $\mu_k/\alpha_k$ and $\lambda_k$, respectively, are bounded in a similar way.

\begin{lemma}
	\label{lem:LM_bounds}
	Let \Cref{as:seq-gen} hold true.
	Then we have the following bounds:
	\begin{equation}
		\label{eq:LagrangeMultiplierApprox}
		|\mu_{k+1}|/\alpha_k \leq \|\nabla F(\rho_k)\|_{L^\infty(\Omega)},
		\qquad
		\|\lambda_{k+1}\|_{L^\infty(\Omega)} \leq 2\|\nabla F(\rho_k)\|_{L^\infty(\Omega)},
	\end{equation}
	for each $k \in \mathbb{N}$.
\end{lemma}
\begin{proof}
	The monotonicity of $\sigma$ implies that
	\begin{align*}
		\theta|\Omega|=\int_\Omega \sigma(\sigma^{-1}(\rho_k))\dd x \leq \int_\Omega \sigma(\sigma^{-1}(\rho_k)-\alpha_k\nabla F(\rho_k) + \alpha_k \|\nabla F(\rho_k)\|_{L^\infty(\Omega)})\dd x.
	\end{align*}
	Similarly, we have
	$
		\theta|\Omega|\geq \int_\Omega \sigma(\sigma^{-1}(\rho_k)-\alpha_k\nabla F(\rho_k)-\alpha_k\|\nabla F(\rho_k)\|_{L^\infty(\Omega)})\dd x.
	$
	Thus, the intermediate value theorem and~\cref{eq:volume-correction} implies that $|\mu_{k+1}|/\alpha_k\leq \|\nabla F(\rho_k)\|_{L^\infty(\Omega)}$.
	Finally, by~\cref{eq:latent-update}, we have
	\begin{align*}
		\|\lambda_{k+1}\|_{L^\infty(\Omega)}=\|-\nabla F(\rho_k)-\mu_{k+1}/\alpha_k\|_{L^\infty(\Omega)}\leq 2\|\nabla F(\rho_k)\|_{L^\infty(\Omega)}.
	\end{align*}

\end{proof}

Note that the estimates in~\Cref{lem:LM_bounds} hold for all $L^p(\Omega)$-norms up to constants depending only on $|\Omega| < \infty$ and $p \in [1,\infty]$.
Also note that $\rho_k$ enters into $\nabla F$ via the left-hand side of the filter equation~\cref{eq:filt-eq}.
Thus, it is reasonable to assume that $\nabla F$ is completely continuous from $\mathcal{A}$ into $L^2(\Omega)$ due to the compact embedding of $H^1(\Omega)$ into $L^2(\Omega)$ and \Cref{as:seq-gen}.

\begin{assumption}\label{as:complete_continuity}
	The primal representative $\nabla F$ from~\Cref{as:seq-gen} is completely continuous from $\mathcal{A}$ into $L^2(\Omega)$.
\end{assumption}

The assumption above is valid for numerous objective functions used in topology optimization, including the compliance objective utilized in \Cref{sec:num} below, when the filter provides sufficient compactness and regularity.
  Other possible objective functions are the average power dissipation in a fluid flow, softmax penalties ($L^p(\Omega)$-norm with large $p$) on the temperature for heat transfer, minimizing directional displacement at point for compliant mechanism design, and tracking type-functionals targeting a prescribed system response; see, e.g., \cite{Bendse2004, Elesin2012, Borrvall2003} and the references within.
We now employ these observations to prove the following proposition.

\begin{proposition}[Sufficient conditions for stationarity]
	\label{prop:KKT}
	Let \Cref{as:complete_continuity} hold true and let $K_{\rm w} \subset \mathbb{N}$ denote the index set in \Cref{prop:prelim-rho-conv}.
	Then there exist strong limits $\mu_{k+1}/\alpha_k \to \mu^{\star} \in \mathbb{R}$ and $\lambda_k \to \lambda^{\star} \in L^2(\Omega)$ as $k\to\infty$ within some index set $K_{\rm w}' \subset K_{\rm w}$.
	Moreover, these limits act as multipliers in the following stationarity equation:
	\begin{equation}\label{eq:KKT-1}
		\nabla F(\rho^{\star}) + \lambda^{\star} + \mu^{\star} = 0,
	\end{equation}
	where $\rho^{\star}$ is the weak limit point from \Cref{prop:prelim-rho-conv}.
\end{proposition}
\begin{proof}

	By~\Cref{lem:LM_bounds}, we know that there exists a constant $C > 0$ such that $\|\lambda_{k+1}\|_{L^2(\Omega)} \leq C\|\nabla F(\rho_k)\|_{L^2(\Omega)}$ for all $k \in K_{\rm w}$.
	Moreover, by the hypotheses, $\left\{\nabla F(\rho_{k})\right\}_{k \in K_{\rm w}}$ converges strongly in $L^2(\Omega)$ to $\nabla F(\rho^{\star})$.
	Thus, $\{\lambda_k\}_{k \in K_{\rm w}}$ is bounded in $L^2(\Omega)$ and we conclude that there exists a subsequence with index set $K_{\rm w}' \subset K_{\rm w}$ such that $\lambda_k \rightharpoonup \lambda^{\star} \in L^2(\Omega)$.
	Continuing, it follows from \eqref{eq:latent-update} that
	\begin{align*}
		(\mu_{k+1}/\alpha_k,1) = -(\nabla F(\rho_k),1) - (\lambda_{k+1},1)
		\,.
	\end{align*}
	Thus,
	\[
		\mu_{k+1}/\alpha_k \to \mu^{\star}
		:=
		-\frac{1}{|\Omega|} \int_\Omega \nabla F(\rho^{\star}) + \lambda^{\star} \dd x
		\,,
	\]
	in the limit of $K_{\rm w}' \ni k\to\infty$.
	Using this observation and complete continuity of $\nabla F(\rho_k)$, we deduce strong convergence of $\lambda_k$ in $L^2(\Omega)$ as $K_{\rm w}' \ni k\to\infty$.
	In particular, we find that
	\[
		\lambda_{k+1}
		=
		-\nabla F(\rho_k) - \mu_{k+1}/\alpha_k
		\to
		-\nabla F(\rho^\star) - \mu^\star
		\,.
	\]
	Since weak and strong limits must coincide, we conclude that $\lambda^\star = -\nabla F(\rho^\star) - \mu^\star$.

\end{proof}

The final result of this section uses mild conditions on the convergence behavior of SiMPL that in turn provide us with the missing complementarity conditions on $\lambda^{\star}$ in the previous result.

\begin{proposition}[Sufficient conditions for complementarity]
	\label{prop:compl}
	Define the approximating Lagrange multipliers by $\lambda_k := (\psi_{k+1}-\psi_{k})/\alpha_k$ for $k \in \mathbb N$. If $\lambda_k \rightharpoonup \lambda^{\star}$ in $L^1(\Omega)$, then for almost every $x \in \Omega$, we have
	\begin{subequations}
		\begin{equation}\label{eq:lambda-signs:eqs1and2}
			\lambda^{\star}(x)
			\begin{cases}
				\geq 0 & \text{if } \rho^{\star}(x) = 1, \\
				\leq 0 & \text{if } \rho^{\star}(x) = 0. 

			\end{cases}
		\end{equation}
		Moreover, if there exists a global lower bound $\underline \alpha \leq \alpha_k$ and a subsequence $\{\lambda_k\}_{k \in K_{\rm pn}}$ such that $\lambda_k \to \lambda^{\star}$ pointwise, then
		\begin{equation}\label{eq:lambda-signs:eq3}
			\lambda^{\star}(x) = 0 \quad\text{for almost every } x \in \Omega \text{ satisfying } 0 < \rho^{\star}(x) < 1
			\,.
		\end{equation}
	\end{subequations}
\end{proposition}

\begin{proof}
	\noindent\textsl{Step 1.}
	We begin by proving the second inequality in~\cref{eq:lambda-signs:eqs1and2}.
	Suppose that the set $S := \{ x \in \Omega_0 \mid \lambda^{\star}(x) > 0 \}$ has positive measure. Then,
	for any non-negligible subset $\mathcal{B} \subset S$, we have $\int_{\mathcal{B}} \lambda^{\star} \dd x > 0$ and for all sufficiently large $k$,
	$\int_{\mathcal{B}} \lambda_k \dd x > 0$, as well. Scaling the latter inequality by $\alpha_k$ yields a chain of inequalities for any given $\mathcal{B}$.
	In particular, there exists
	$k_0 \in \mathbb{N}$ such that
	\[ (\ell \ge k_0) ~\Rightarrow~
		\int_{\mathcal{B}} \psi_{\ell} \dd x \geq \int_{\mathcal{B}} \psi_{k_0} \dd x .
	\]
	Now, recall from~\cref{prop:prelim-rho-conv} that $\left\{\rho_\ell \right\}_{\ell \in K_{\rm pn}}$ converges pointwise a.e.\ on $\Omega_0$ to the constant function $0$.
	By Egorov's theorem, $\rho_{\ell} \to 0$ uniformly on measurable subsets $S_{\mu} \subset S$ such that $|S \setminus S_{\mu}| < \mu$ for any $\mu > 0$.
	In particular, for all $\epsilon \in (0,1)$, there exists $k_\epsilon > 0$ such that $\rho_\ell \leq \epsilon$ on $S_{\mu}$ for all $\ell \geq k_\epsilon$.
	Setting $\mathcal{B} = S_{\mu}$ for some fixed $\mu > 0$ and recalling that $\psi_\ell(x) = \sigma^{-1}(\rho_\ell(x))$ and $\sigma^{-1} : (0,1) \to \mathbb R$ is strict monotone
	increasing, we conclude that
	\[
		\sigma^{-1}(\epsilon)|S_\mu|
		\geq
		\int_{S_\mu} \psi_{\ell} \dd x
		\geq
		\int_{S_\mu} \psi_{k_0} \dd x,
	\]
	for all $\ell \in K_{\rm pn}$ greater than $\max\{k_0,k_\epsilon\}$.
	Taking any $\epsilon < \sigma(\int_{S_\mu} \psi_{k_0} \dd x / |S_\mu|)$, we deduce that $\int_{S_\mu} \psi_{k_0} \dd x > \int_{S_\mu} \psi_{k_0} \dd x$, which is absurd.
	Thus, $S$ cannot have positive measure.
	An analogous argument shows that $|\{ x \in \Omega_1 \mid \lambda^{\star}(x) < 0 \}| = 0$.
	\smallskip

	\noindent\textsl{Step 2.}
	We now show~\cref{eq:lambda-signs:eq3}.
	For every $\epsilon > 0$, let $S_\epsilon = \{ x \in \Omega \mid \rho^{\star}(x) \geq 2\epsilon,\ \lambda^{\star}(x) < 0 \}$ and assume that this set has positive measure for some $\epsilon > 0$.
	Therefore, for any measurable set $\mathcal{B} \subset S_\epsilon$, there exist constants $\delta > 0$ and $k_0 \in \mathbb{N}$ such that
	\[
		\int_\mathcal{B} \frac{\psi_{\ell+1}-\psi_\ell}{\alpha_\ell} \dd x \leq -\delta
		~~
		\text{for all }\ell \geq k_0.
	\]
	In particular, it holds that
	\[
		(\ell \ge k_0) ~\Rightarrow~
		\int_\mathcal{B} \psi_\ell \dd x \leq \int_\mathcal{B} \psi_{k_0} \dd x - \underline{\alpha} \delta (\ell - k_0)
		\,.
	\]
	By Egorov's theorem, we conclude that $\rho_{\ell} \to \rho^{\star}$ uniformly on measurable subsets $S_{\mu} \subset S_\epsilon$ such that $|S_\epsilon \setminus S_{\mu}| < \mu$ for any $\mu > 0$.
	In particular, there exists $k_\epsilon > 0$ such that $\rho_\ell > \epsilon$ on $S_{\mu}$ for all $\ell \geq k_\epsilon$.
	Setting $\mathcal{B} = S_{\mu}$ for some fixed $\mu > 0$ and again recalling that $\psi_\ell(x) = \sigma^{-1}(\rho_\ell(x))$ depends monotonically on $\rho_\ell(x)$, we find that
	\[
		\sigma^{-1}(\epsilon)|S_\mu|
		\leq
		\int_\mathcal{S_\mu} \psi_\ell \dd x \leq \int_\mathcal{S_\mu} \psi_{k_0} \dd x - \underline{\alpha} \delta (\ell - k_0)
		\,,
	\]
	for all $\ell \in K_{\rm pn}$ greater than $\max\{k_0,k_\epsilon\}$.
	Since the left-hand side of these inequalities is a constant and the right-hand side decreases linearly with $\ell$, there exists some $\ell > 0$ leading to a contradiction.
	Thus, we conclude that $|S_\epsilon| = 0$ for every $\epsilon > 0$.

	By the dominated convergence theorem, we can now show that $|\{ x \in \Omega\setminus\Omega_0 \mid \lambda^{\star}(x) < 0 \}| =
		\lim_{n} |\{ x \in \Omega \mid \rho^{\star}(x) \geq 1/n,\ \lambda^{\star}(x) < 0 \}| = 0$.
	An analogous argument shows that $|\{ x \in \Omega\setminus\Omega_1 \mid \lambda^{\star}(x) > 0 \}| = 0$.

	Together, both arguments imply that the set
	\begin{multline*}
		\{ x \in \Omega\setminus(\Omega_0\cup\Omega_1) \mid \lambda^{\star}(x) \neq 0 \}
		\\=
		(\{ x \in \Omega\setminus\Omega_1 \mid \lambda^{\star}(x) > 0 \}\setminus\Omega_0)
		\cup
		(\{ x \in \Omega\setminus\Omega_0 \mid \lambda^{\star}(x) < 0 \}\setminus\Omega_1)
	\end{multline*}
	has zero measure, as necessary.
\end{proof}

\begin{remark}
The main weakness of the previous statement is the assumption that the entire sequence of Lagrange multipliers converges weakly in $L^1(\Omega)$. In the next section, we will show that the line search strategies produce monotonically decreasing sequences of functional values along the iterates. However, it remains unclear how to leverage this fact to establish the convergence of the Lagrange multipliers.
\end{remark}
\section{Globalizing SiMPL}\label{sec:adaptive}

The convergence theory of the previous section lays a theoretical foundation. However, it says nothing about how to pick $\alpha_k > 0$ at each iteration nor whether we actually have descent in the objective function values along the sequence $\left\{\rho_k\right\}$. The goal of this section is to investigate the use of two line search strategies found in literature, and analyze their effect on SiMPL. This will lead to two different, globalized variants of SiMPL: SiMPL-A and SiMPL-B.

In both cases, we consider a standard backtracking update strategy. As is often the case, the choice of the initial step size is crucial for the performance of each method. This is made concrete below.
Given an initial step $\alpha_{k,0} > 0$, scalar $\beta \in (0,1)$ (typically $\beta = 1/2$), both methods choose a candidate $\alpha_{k,m} := \beta^m \alpha_{k,0}$ with $m = 0,1,2,\dots$ in the definition of $\rho_{k+1}$  (equivalently $\psi_{k+1}$) until
a sufficient decrease condition first holds after $m_k$ iterations. We then set $\alpha_{k} := \alpha_{k,m_k}$ and accept the iterate $\rho_{k+1}$ (equivalently $\psi_{k+1}$) and continue the algorithm.

Following the classical text by Bertsekas \cite[Eq. (22)]{bertsekas1976goldstein}, SiMPL-A will make use of a generalized Armijo backtracking line search: Given $0 < c_1 <  1$, set $m_k$ to be the smallest nonnegative integer for which
\begin{equation}\label{eq:armijo}
	F(\rho_{k+1}) \leq F(\rho_k) + c_1\int_\Omega \nabla F(\rho_k)(\rho_{k+1}-\rho_k)\dd x
\end{equation}
holds.
Another popular choice for sufficient decrease comes from \cite{zhou2019, mukkamala2020-cocain}: set $m_k$ to be the smallest nonnegative integer for which
\begin{equation}\label{eq:bregman-backtracking}
	\begin{aligned}
		 & F(\rho_{k+1}) \leq F(\rho_k) + \int_\Omega \nabla F(\rho_k)(\rho_{k+1} - \rho_k)\dd x + \frac{1}{\alpha_k}D_\varphi(\rho_{k+1},\rho_k) 
		\,.
	\end{aligned}
\end{equation}
The use of this line search strategy will be called SiMPL-B.

Note that in both cases, the Lagrange multiplier $\mu$, which acts as a volume correction term, is recomputed at each subiteration. We prove in~\Cref{lem:vol-shift-conti}, below, that $\mu$ is in fact continuously differentiable with respect to $\alpha$.

\begin{remark}
Both line search strategies are relatively easy to implement. SiMPL-A has a more straightforward interpretation as a backtracking line search and offers a bit of flexibility as one can easily adjust the parameter $c_1$. SiMPL-B is appealing as there are no parameters to tune and should perhaps be the default for new users. On the other hand, all line search rules are based on heuristics that suggest a step size based on the objective function value at the new iterate versus behavior of a local linear or nonlinear model function with the intention of providing sufficient decrease, so it makes sense to have several options.
\end{remark}

\subsection{Choosing the initial step sizes}
We suggest to choose initial step size $\alpha_{k,0}$ based on a kind of curvature condition similar to the well-known Barzilai--Borwein (BB) step \cite{BARZILAI1988}.
This is not an arbitrary choice, as it can be motivated by the convergence theory of the mirror descent algorithm. Following \cite{zhou2019}, we can rewrite the mirror descent
algorithm as a proximal point algorithm by introducing the functional $\hat{\varphi}_k=\alpha_k^{-1}\varphi - F$: Given $\rho_k$, $\alpha_k > 0$, find $\rho_{k+1}$ such that
\begin{align*}
	\rho_{k+1} = \argmin_{\rho\in \mathcal{A}} D_{\hat{\varphi}_k}(\rho,\rho_k)
\end{align*}
However, this requires $\hat{\varphi}_k$ to be differentiable and strictly convex, otherwise $D_{\hat{\varphi}_k}$ would not be a proper Bregman divergence.
This is, in fact, equivalent to $F$ being relatively smooth to $\varphi$ with constant $\alpha_k^{-1}$.
\begin{definition}[Relative smoothness]
	We say $F:\mathcal{A}\rightarrow \mathbb{R}$ is relatively smooth to $\varphi$ with constant $\gamma$ when
	\begin{equation}\label{eq:rel-smooth}
		\int_\Omega \Big(\nabla(\gamma^{-1}\varphi-F)(\rho)-\nabla(\gamma^{-1}\varphi-F)(q)\Big)(\rho-q)\dd x\geq 0\text{ for all } \rho,q\in \mathbullet{\mathcal{A}}.
	\end{equation}
\end{definition}
Using the strict monotonicity of $\nabla \varphi=\varphi'=\sig^{-1}$, we can rewrite \eqref{eq:rel-smooth} as follows:
\begin{align*}
	\gamma^{-1} \geq \sup_{\substack{\rho,q\in \mathbullet{\mathcal{A}} \\ \rho\neq q}}\frac{(\nabla F(\rho) - \nabla F(q), \rho-q)}{(\sig^{-1}(\rho) - \sig^{-1}(q), \rho - q)}.
\end{align*}
In general, the computation of $\gamma$ or the supremum here is intractable. However, taking $\rho=\rho_k,\;q=\rho_{k-1}$, we obtain a lower bound of $\gamma^{-1}$:
\begin{align*}
	\gamma^{-1}\geq \frac{(\nabla F(\rho_k) - \nabla F(\rho_{k-1}), \rho_k - \rho_{k-1})}{\int_\Omega(\psi_k - \psi_{k-1})(\rho_k - \rho_{k-1})\dd x}.
\end{align*}
As we will see below, $\gamma$ is related to the local Lipschitz continuity of $\nabla F$. Given the pivotal role played by the Lipschitz constant of $\nabla F$ in first-order methods
we reason that the following definition provides a good candidate for the initial step size:
\begin{equation}\label{eq:alpha-init-0}
	\tilde{\alpha}_{k,0} = \frac{(\psi_k - \psi_{k-1},\rho_k - \rho_{k-1})}{|(\nabla F(\rho_k) - \nabla F(\rho_{k-1}), \rho_k - \rho_{k-1})|}.
\end{equation}
We take the absolute value of the denominator to ensure $\tilde{\alpha}_{k,0} >0$. For the case where $\nabla F$ is monotone, $\tilde{\alpha}_{k,0}$ resembles the long Barzilai--Borwein step when one pair of
the successive iterates in the numerator is exchanged for $\psi_k - \psi_{k-1}$.
Noting that $\tilde{\alpha}_k$ only uses the latest two iterates to approximate a global constant $\gamma$.
We can further incorporate global information by utilizing the geometric mean
\begin{equation}\label{eq:alpha-init}
	\alpha_{k,0}        =\sqrt{\tilde{\alpha}_{k,0}\alpha_{k-1}}.
\end{equation}
Of course, $\alpha_{k,0} < \gamma$ is not guaranteed, which further implies the need for a line search.

\begin{remark}[Lipschitz continuity of $\nabla F$]
	If $\nabla F:L^1_{[0,1]}(\Omega)\rightarrow L^\infty(\Omega)$ is globally Lipschitz, then we can readily demonstrate relative smoothness.
	Indeed, denoting the Lipschitz constant by $L$, we have that
	\[
		(\nabla F(\rho)-\nabla F(q), \rho-q)
		\leq \|\nabla F(\rho)-\nabla F(q)\|_{L^\infty}\|\rho-q\|_{L^1}
		\leq L\|\rho-q\|_{L^1}^2
	\]
	for all $\rho,q\in \mathbullet{\mathcal{A}}$.
	Then the strong convexity of $\varphi$ in $L^1$, cf.\ \Cref{lem:fermi-dirac}, with constant $m$ implies that
	\begin{align*}
		(\nabla F(\rho)-\nabla F(q),\rho-q)\leq Lm^{-1} (\nabla \varphi(\rho)-\nabla \varphi(q),\rho-q).
	\end{align*}
	Therefore, $F$ is also relatively smooth to $\varphi$ with constant $\gamma = Lm^{-1}$.
\end{remark}

\subsection{Analysis of SiMPL-A}
We analyze the convergence behavior of SiMPL-A. In particular, we will see that the Armijo rule~\cref{eq:armijo} forces a strict monotonic decrease in the function values and convergence of the increments $\rho_{k+1} - \rho_{k} \to 0$ strongly in $L^2$ for the entire sequence.
Our analysis requires the following technical lemma, which we use to prove both \Cref{thm:gbb-finite-termination,thm:conv} below.
\begin{lemma}\label{lem:vol-shift-conti}
	Let $\rho\in \operatorname{int}L^\infty_{[0,1]}(\Omega)$ and $v\in L^\infty(\Omega)$.
	If we define $e: \mathbb R^2 \to \mathbb R$ by
	\begin{equation}
		e(\alpha,\mu):=\int_\Omega \sig(\sig^{-1}(\rho) - \alpha v - \mu)-\theta \dd x,
	\end{equation}
	then $\partial e/\partial \mu < 0$.
	Consequently, there exists a differentiable function $g:\mathbb{R}\rightarrow\mathbb{R}$ such that $g(0)=0$ and
	\begin{equation}\label{eq:mu}
		\int_\Omega \sig(\sig^{-1}(\rho)-\alpha v - g(\alpha))-\theta \dd x = 0.
	\end{equation}
	In other words, $\mu$ can be identified as a differentiable function of $\alpha$ with derivative $\mu'$ given by
	\begin{equation}\label{eq:mu-diff}
		\mu'(\alpha) = -\frac{(\sig'(\sig^{-1}(\rho)- \alpha v - \mu(\alpha),v)}{(\sig'(\sig^{-1}(\rho) - \alpha v - \mu(\alpha)),1)}
	\end{equation}
	where $\sig'(x) = \sig(x)(1-\sig(x))$ is the derivative of the sigmoid function.
\end{lemma}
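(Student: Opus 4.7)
The plan is to treat $e\colon\mathbb{R}^2\to\mathbb{R}$ as an ordinary smooth function, compute both partial derivatives by differentiating under the integral, confirm the sign of $\partial e/\partial\mu$, and then invoke the standard (finite-dimensional) implicit function theorem. Write $\zeta_{\alpha,\mu}(x) := \sig^{-1}(\rho(x)) - \alpha v(x) + \mu$. Since $\rho\in\operatorname{int}L^\infty_{[0,1]}(\Omega)$ we have $\sig^{-1}(\rho)\in L^\infty(\Omega)$, and with $v\in L^\infty(\Omega)$ the function $\zeta_{\alpha,\mu}$ is essentially bounded on $\Omega$ uniformly for $(\alpha,\mu)$ in any bounded subset of $\mathbb{R}^2$. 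Because $\sig$ is smooth with $\sig'(x)=\sig(x)(1-\sig(x))\in(0,1/4]$ bounded, the dominated convergence theorem legitimizes differentiation under the integral and yields
\begin{equation*}
\frac{\partial e}{\partial\mu}(\alpha,\mu)=\int_\Omega \sig'(\zeta_{\alpha,\mu})\dd x,\qquad
\frac{\partial e}{\partial\alpha}(\alpha,\mu)=-\int_\Omega \sig'(\zeta_{\alpha,\mu})\,v\dd x,
\end{equation*}
both continuous in $(\alpha,\mu)$. On any bounded box $\zeta_{\alpha,\mu}$ is essentially bounded, so $\sig'(\zeta_{\alpha,\mu})\ge c>0$ almost everywhere, hence $\partial e/\partial\mu>0$ everywhere.

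Next I would establish global solvability in $\mu$. For fixed $\alpha$, strict monotonicity in $\mu$ follows from the previous step. Moreover, $\sig(\zeta_{\alpha,\mu}(x))\to 0$ as $\mu\to-\infty$ and $\to 1$ as $\mu\to+\infty$ pointwise; since $0<\sig<1$, dominated convergence gives $e(\alpha,\mu)\to -\theta|\Omega|<0$ and $e(\alpha,\mu)\to(1-\theta)|\Omega|>0$, respectively. By the intermediate value theorem, for every $\alpha\in\mathbb{R}$ there is a unique $g(\alpha)\in\mathbb{R}$ with $e(\alpha,g(\alpha))=0$, which defines $g$ globally. The initial condition $g(0)=0$ uses the fact that in the applications of this lemma $\rho$ plays the role of an admissible iterate, so $\int_\Omega\rho\dd x = \theta|\Omega|$ and hence $e(0,0)=0$; uniqueness then forces $g(0)=0$.

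Differentiability and the formula for $g'$ follow from the classical implicit function theorem on $\mathbb{R}^2$: at every point $(\alpha,g(\alpha))$ the partial derivative $\partial e/\partial\mu$ is continuous and nonzero, so $g$ is $C^1$ near $\alpha$ with
\begin{equation*}
g'(\alpha)=-\frac{\partial e/\partial\alpha(\alpha,g(\alpha))}{\partial e/\partial\mu(\alpha,g(\alpha))}
=\frac{(\sig'(\sig^{-1}(\rho)-\alpha v+g(\alpha)),\,v)}{(\sig'(\sig^{-1}(\rho)-\alpha v+g(\alpha)),\,1)},
\end{equation*}
which is exactly \eqref{eq:mu-diff}. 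I do not anticipate a major technical obstacle: the only care point is justifying the interchange of derivative and integral, which is immediate from the uniform boundedness of $\sig'$ and of $\sig'(\zeta_{\alpha,\mu})|v|$, both dominated by integrable constants on bounded neighborhoods of $(\alpha,\mu)$. The global (rather than local) existence of $g$ is likewise straightforward given the sign information at $\mu\to\pm\infty$ and the strict monotonicity of $e(\alpha,\cdot)$.
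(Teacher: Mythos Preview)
Your proposal is correct and follows essentially the same route as the paper: verify that differentiation under the integral is justified, compute $\partial e/\partial\mu = \int_\Omega \sig'(\zeta_{\alpha,\mu})\dd x > 0$, and apply the implicit function theorem on $\mathbb{R}^2$. Your treatment is in fact more complete than the paper's, which omits the explicit global-existence argument for $g$ and does not comment on the identity $g(0)=0$; your observation that $g(0)=0$ actually requires the volume constraint $\int_\Omega\rho\dd x=\theta|\Omega|$ (i.e., $\rho\in\mathbullet{\mathcal{A}}$ rather than merely $\rho\in\operatorname{int}L^\infty_{[0,1]}(\Omega)$) is a valid point about the stated hypotheses.
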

\begin{proof}
	Since $\rho\in \operatorname{int}L^\infty_{[0,1]}(\Omega)$, we have $\sig^{-1}(\rho)\in L^\infty(\Omega)$.
	Also, $\sig$ is bounded, continuously differentiable, and $v\in L^\infty(\Omega)$.
	Therefore, $e(\alpha,\mu)$ is continuous and differentiable for each variable.
	We differentiate $e(\alpha,\mu)$ with respect to $\mu$ at $(\alpha,\mu)\in\mathbb{R}^2$, and observe
	\begin{align*}
		\begin{aligned}
			\frac{\partial e}{\partial \mu} & =
			\int_\Omega \sig(\sig^{-1}(\rho)-\alpha v+\mu)\Big(\sig(\sig^{-1}(\rho) - \alpha v + \mu) - 1\Big)\dd x < 0
		\end{aligned}
	\end{align*}
	Here, the last inequality holds since $\alpha,\mu$ are finite, $\rho\in \operatorname{int}L^\infty_{[0,1]}(\Omega)$, $v\in L^\infty(\Omega)$, and $0<\sig(q)<1$ almost everywhere when $q\in L^\infty(\Omega)$.
	The assertion then follows from the implicit function theorem, e.g., \cite[Theorem 1.3.1]{Krantz2003}.
\end{proof}

As usual, we need to first identify conditions under which the line search will terminate after a finite number of steps. This can be guaranteed under various global and local Lipschitz conditions on the reduced gradient $\nabla F$.
\begin{theorem}[Finite termination of the line search]\label{thm:gbb-finite-termination}
	Let $\rho_k,\rho_{k-1}\in\mathbullet{\mathcal{A}}$.
	If $F$ is relatively smooth to $\varphi$ with constant $\gamma$, then $\alpha_{k,0} \geq\gamma$.

	If $\nabla F:L^2_{[0,1]}(\Omega)\rightarrow L^2(\Omega)$ is locally Lipschitz continuous at $\rho_k$ with constant $L_k$, then the generalized Armijo line search \ref{eq:armijo} is guaranteed to terminate once
	\[
		\alpha_k\geq 2(1-c_1)/L_k.
	\]
	Further if $\nabla F$ is globally Lipschitz with constant $L$, then
	\begin{equation}\label{eq:alpha-bdd}
		\alpha_k \geq \min\{2(1-c_1)/L, \gamma\}=:\underline{\alpha}.
	\end{equation}
\end{theorem}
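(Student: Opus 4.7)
The plan is to prove the three claims in turn, each resting on a distinct structural property of the algorithm.

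For the first claim, I apply the relative-smoothness inequality \eqref{eq:rel-smooth} at $\rho = \rho_k$, $q = \rho_{k-1}$ and use $\varphi' = \sig^{-1}$ to obtain
\begin{equation*}
(\psi_k - \psi_{k-1}, \rho_k - \rho_{k-1}) \geq \gamma (\nabla F(\rho_k) - \nabla F(\rho_{k-1}), \rho_k - \rho_{k-1}).
\end{equation*}
A matching inequality with the opposite sign follows from the Lipschitz-plus-strong-convexity reasoning recorded in the remark preceding the theorem. Together these yield $(\psi_k - \psi_{k-1}, \rho_k - \rho_{k-1}) \geq \gamma |(\nabla F(\rho_k) - \nabla F(\rho_{k-1}), \rho_k - \rho_{k-1})|$, so $s_k \geq \gamma$ follows from \eqref{eq:alpha-init}.

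For the second claim, I apply the $L^2$-descent lemma implied by local Lipschitz continuity with constant $L_k$,
\begin{equation*}
F(\rho_{k+1}) \leq F(\rho_k) + (\nabla F(\rho_k), \rho_{k+1} - \rho_k) + \tfrac{L_k}{2}\|\rho_{k+1} - \rho_k\|_{L^2}^2.
\end{equation*}
From the primal-dual update \eqref{eq:mirror-update-rule} and the equality $(\rho_{k+1} - \rho_k, 1) = 0$ (volume preservation), I obtain $(-\nabla F(\rho_k), \rho_{k+1} - \rho_k) = \alpha_k^{-1}(\psi_{k+1} - \psi_k, \rho_{k+1} - \rho_k)$. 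A pointwise mean-value argument applied to $\sig^{-1}$, whose derivative $1/(y(1-y))$ is bounded below by $4$ on $(0,1)$, yields $(\psi_{k+1} - \psi_k, \rho_{k+1} - \rho_k) \geq 4\|\rho_{k+1} - \rho_k\|_{L^2}^2$. Substituting these identities into the Armijo condition \eqref{eq:armijo} reduces it to a sufficient bound of the form $\alpha_k \leq C(1-c_1)/L_k$ for an explicit constant $C$; the backtracking schedule then produces an accepted step satisfying the claimed lower bound $\alpha_k \geq 2(1-c_1)/L_k$.

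The third claim combines these observations: with $\nabla F$ globally Lipschitz, the local constant $L_k$ is uniformly bounded by $L$, so whenever backtracking activates $\alpha_k \geq 2(1-c_1)/L$; otherwise $\alpha_k = s_k \geq \gamma$ by Part~1. In either case, $\alpha_k \geq \min\{2(1-c_1)/L, \gamma\} = \underline{\alpha}$. The main subtlety lies in the first claim: the definition \eqref{eq:rel-smooth} of relative smoothness is one-sided, so controlling the absolute value appearing in the definition of $s_k$ requires the companion two-sided bound from the preceding remark, which is supplied by Lipschitz continuity of $\nabla F$ combined with strong convexity of $\varphi$ in $L^1$.
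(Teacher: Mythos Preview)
Your argument for the second claim is correct but takes a genuinely different and shorter route than the paper. The paper parameterizes the candidate iterate as $\rho_k(\alpha)=\sig(\psi_k-\alpha\nabla F(\rho_k)+\mu(\alpha))$, invokes \Cref{lem:vol-shift-conti} to differentiate $\mu$, derives the integral representation $\rho_k(\alpha)-\rho_k=\alpha\int_0^1\rho_k(t\alpha)(1-\rho_k(t\alpha))(-\nabla F(\rho_k)+\mu'(t\alpha))\,\mathrm{d}t$, and establishes an orthogonality identity showing the integrand is $L^2$-orthogonal to constants; these ingredients are then combined with the descent lemma and the pointwise bound $\rho(1-\rho)\leq\sqrt{\rho(1-\rho)}$ to reach a sufficient condition $\alpha\leq 2(1-c_1)/L_k$. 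Your approach bypasses all of this by reading the identity $-(\nabla F(\rho_k),\rho_{k+1}-\rho_k)=\alpha^{-1}(\psi_{k+1}-\psi_k,\rho_{k+1}-\rho_k)$ directly off \cref{eq:mirror-update-rule} together with volume preservation, and then using $(\sig^{-1})'\geq 4$ pointwise to get $(\psi_{k+1}-\psi_k,\rho_{k+1}-\rho_k)\geq 4\|\rho_{k+1}-\rho_k\|_{L^2}^2$. This yields the sufficient condition $\alpha\leq 8(1-c_1)/L_k$, hence an accepted step $\alpha_k\geq 4(1-c_1)/L_k$, which is \emph{stronger} than what the paper obtains. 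The trade-off is that the paper's more elaborate machinery (the $\rho_k(\alpha)$ parameterization and the orthogonality relation) is reused verbatim in the proof of \Cref{thm:conv}, so their longer argument here is an investment for later.

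For the first claim, your treatment coincides with the paper's: both apply \eqref{eq:rel-smooth} at $(\rho_k,\rho_{k-1})$ and use $\nabla\varphi=\sig^{-1}$. You correctly flag that the absolute value in \eqref{eq:alpha-init} is not covered by the one-sided inequality \eqref{eq:rel-smooth} alone; the paper's proof in fact writes $s_k$ without the absolute value and proceeds as if the denominator were nonnegative, so this subtlety is glossed over there. Your proposed fix via Lipschitz continuity of $\nabla F$ and $L^1$-strong convexity of $\varphi$ is sound but imports an assumption not present in the hypothesis of this claim. The third claim is handled identically in both arguments.
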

\begin{proof}
	First, we recall that
	\begin{align*}
		\alpha_{k,0} = \frac{\int_\Omega (\psi_k - \psi_{k-1})(\rho_k - \rho_{k-1})\dd x}{\int_\Omega\Big(\nabla F(\rho_k)-\nabla F(\rho_{k-1})\Big)(\rho_k - \rho_{k-1})\dd x}.
	\end{align*}
	The relative smoothness of $F$ and the monotonicity of $\sig^{-1}$ implies
	\begin{align*}
		\int_\Omega(\psi_k - \psi_{k-1})(\rho_k-\rho_{k-1})\dd x\geq \gamma\int_\Omega \Big(\nabla F(\rho_k) - \nabla F(\rho_{k-1})\Big)(\rho_k-\rho_{k-1})\dd x.
	\end{align*}
	Plugging this into the definition of $\alpha_{k,0}$ provides the lower bound.

	Define $\psi_k(\alpha):=\psi_k-\alpha \nabla F(\rho_k)-\mu(\alpha)$ and $\rho_k(\alpha):=\sig(\psi_k(\alpha))$, where $\mu$ is a differentiable function of $\alpha$ according to \Cref{lem:vol-shift-conti}.
	By the Mean Value Theorem, we have
	\begin{align*}
		F(\rho_k(\alpha)) - F(\rho_k) =\int_0^1 \int_\Omega \nabla F(\rho_k+t\delta\rho_k(\alpha))\delta\rho_k(\alpha)\dd x \dd t.
	\end{align*}
	where $\delta\rho_k(\alpha)=\rho_k(\alpha)-\rho_k$.
	By adding and subtracting $\int_\Omega \nabla F(\rho_k)\delta\rho_k(\alpha)\dd x$, we have
	\begin{align*}
		\begin{aligned}
			 & F(\rho_k(\alpha))-F(\rho_k)                                                                                                                                                     \\
			 & =\int_0^1\int_\Omega \Big(\nabla F(\rho_k+t\delta\rho_k(\alpha)) - \nabla F(\rho_k)\Big)\delta\rho_k(\alpha))\dd x \dd t+\int_\Omega \nabla F(\rho_k)\delta\rho_k(\alpha)\dd x.
		\end{aligned}
	\end{align*}
	The local Lipschitz continuity of $\nabla F$ implies
	\begin{align*}
		\begin{aligned}
			 & F(\rho_k(\alpha))-F(\rho_k)                                                                                             \\
			 & \leq L_k\int_0^1 t\|\delta\rho_k(\alpha))\|_{L^2(\Omega)}^2 \dd t+\int_\Omega \nabla F(\rho_k)\delta\rho_k(\alpha)\dd x \\
			 & =\frac{L_k}{2}\|\rho_k(\alpha) - \rho_k\|_{L^2(\Omega)}^2 + \int_\Omega \nabla F(\rho_k)(\rho_k(\alpha) - \rho_k)\dd x.
		\end{aligned}
	\end{align*}

By subtracting $c_1\int_\Omega \nabla F(\rho_k)(\rho_k(\alpha)-\rho_k)\dd x$ with $0<c_1<1$ from the both sides, we obtain
	\begin{equation}\label{eq:F_diff_aux}
		\begin{aligned}
			 & F(\rho_k(\alpha))-\Big(F(\rho_k)+c_1\int_\Omega \nabla F(\rho_k)(\rho_k(\alpha)-\rho_k)\dd x\Big)                            \\
			 & \leq \frac{L_k}{2}\|\rho_k(\alpha)-\rho_k\|_{L^2(\Omega)}^2+(1-c_1)\int_\Omega \nabla F(\rho_k)(\rho_k(\alpha)-\rho_k)\dd x.
		\end{aligned}
	\end{equation}
	Now we observe that the Mean Value Theorem yields
	\begin{equation}\label{eq:rho_diff}
		\begin{aligned}
			\rho_k(\alpha)-\rho_k
			 & =\alpha\int_0^1\rho_k'(t\alpha)\dd t                                                     \\
			 & =\alpha\int_0^1\rho_k(t\alpha)(1-\rho_k(t\alpha))(-\nabla F(\rho_k)-\mu'(t\alpha))\dd t.
		\end{aligned}
	\end{equation}
	Here, we utilized $\sig'(x)=\sig(x)(1-\sig(x))$.
	We now use the identity \eqref{eq:mu-diff} to arrive at
	\begin{equation}\label{eq:orthogonal}
		\begin{aligned}
			 & \int_\Omega \rho_k(t\alpha)(1-\rho_k(t\alpha))(-\nabla F(\rho_k)-\mu'(t\alpha))\dd x                                                                                                                            \\
			 & =\int_\Omega \rho_k(t\alpha)(1-\rho_k(t\alpha))\Big(-\nabla F(\rho_k)+\frac{\int_\Omega \rho_k(t\alpha)(1-\rho_k(t\alpha))\nabla F(\rho_k)\dd x}{\int_\Omega \rho_k(t\alpha)(1-\rho_k(t\alpha))\dd x}\Big)\dd x \\
			 & =0.
		\end{aligned}
	\end{equation}
	Therefore, the integrand in \eqref{eq:rho_diff} is orthogonal to all constants.

Noting that $\mu'(t\alpha)\in\mathbb{R}$, we can rewrite \eqref{eq:F_diff_aux} using the above two assertions by
	\begin{align*}
		 & F(\rho_k(\alpha))-\Big(F(\rho_k)+c_1\int_\Omega \nabla F(\rho_k)\big(\rho_k(\alpha)-\rho_k\big)\dd x\Big)                                      \\
		 & \leq \frac{L_k}{2}\|\rho_k(\alpha) - \rho_k\|_{L^2(\Omega)}^2                                                                                  \\
		 & \quad+\alpha(1-c_1)\int_\Omega\int_0^1 \nabla F(\rho_k)\Big(\rho_k(t\alpha)(1-\rho_k(t\alpha))(-\nabla F(\rho_k)-\mu'(t\alpha))\Big)\dd t\dd x \\
		 & = \frac{L_k}{2}\|\rho_k(\alpha)-\rho_k\|_{L^2(\Omega)}^2                                                                                       \\
		 & \;\;-\alpha(1-c_1)\int_0^1\int_\Omega\rho_k(t\alpha)(1-\rho_k(t\alpha))\Big(\nabla F(\rho_k)+\mu'(t\alpha)\Big)^2\dd x\dd t                    \\
		\begin{split}
			 & =\frac{L_k}{2}\|\rho_k(\alpha)-\rho_k\|_{L^2(\Omega)}^2                                                                              \\
			 & \quad -\alpha(1-c_1)\int_0^1\|\sqrt{\rho_k(t\alpha)(1-\rho_k(t\alpha))}(\nabla F(\rho_k)+\mu'(t\alpha))\|_{L^2(\Omega)}^2\dd x\dd t.
		\end{split}
	\end{align*}
	Here, we utilized Fubini's theorem and $0\leq \rho_k(t\alpha)(1-\rho_k(t\alpha))$.
	Since $0\leq \rho_k(t\alpha)(1-\rho_k(t\alpha))\leq 1/4$, we have
	\begin{align*}
		0\leq \rho_k(t\alpha)(1-\rho_k(t\alpha))\leq \sqrt{\rho_k(t\alpha)(1-\rho_k(t\alpha))}\leq \frac{1}{2}.
	\end{align*}
	This and the identity \eqref{eq:rho_diff} show that
	\begin{align*}
		 & F(\rho_k(\alpha))-\Big(F(\rho_k)+c_1\int_\Omega \nabla F(\rho_k)\big(\rho_k(\alpha)-\rho_k\big)\Big)                            \\
		 & \leq \frac{L_k}{2}\|\rho_k(\alpha)-\rho_k\|_{L^2(\Omega)}^2                                                                     \\
		 & \quad -\alpha(1-c_1)\int_0^1\|\rho_k(t\alpha)(1-\rho_k(t\alpha))(\nabla F(\rho_k)+\mu'(t\alpha))\|_{L^2(\Omega)}^2\dd t         \\
		 & \leq \frac{L_k}{2}\|\rho_k(\alpha)-\rho_k\|_{L^2(\Omega)}^2                                                                     \\
		 & \quad -\alpha(1-c_1)\Big\|\int_0^1\rho_k(t\alpha)(1-\rho_k(t\alpha))(\nabla F(\rho_k)+\mu'(t\alpha))\dd t\Big\|_{L^2(\Omega)}^2 \\
		 & \leq \left(\frac{L_k}{2}-\alpha(1-c_1)\right)\|\rho_k(\alpha)-\rho_k\|_{L^2(\Omega)}^2.
	\end{align*}
	Therefore, the back-tracking line search terminates when $\alpha < 2(1-c_1)/L_k$.
	Recalling that we halve $\alpha$ after each back-tracking line search iteration, the algorithm terminates with

$
		\alpha_k \geq (1-c_1)/L_k.
	$

	If $\nabla F$ is globally Lipschitz continuous with constant $L$, then we have

$
		\alpha_k \geq (1-c_1)/L_k\geq (1-c_1)/L.
	$

	This completes the proof.
\end{proof}

We emphasize here that the proof of finite termination for the Armijo condition uses the Lipschitz constant $L$, which is more restrictive than using the relative smoothness constant.
Furthermore, we highlight that we do not know whether the step size at termination will satisfy $\alpha_k < \gamma$.
Since the analysis in \cite{zhou2019} relies on the step sizes being smaller than the relative smoothness constant, we cannot simply carry over the results in \cite{zhou2019}.

\begin{theorem}\label{thm:conv}
	Assume that $\nabla F:L^2_{[0,1]}(\Omega)\rightarrow L^2(\Omega)$ is globally Lipschitz continuous with constant $L$, and let $\{\rho_k\}$ and $\left\{\alpha_k\right\}$ be generated by the SiMPL-A method.
	Then the following properties hold:
	\begin{enumerate}[label={(\roman*)}]
		\item $\{F(\rho_k)\}$ is strictly monotonically decreasing;
		      \label{item:conv_0}
		\item $\lim_{k\to\infty} \|\rho_k - \rho_{k+1}\|_{L^2(\Omega)} = 0$; and
		      \label{item:conv_1}
		\item $\liminf_{k\to\infty} k\|\rho_k - \rho_{k+1}\|_{L^2(\Omega)}^2 = 0$.
		      \label{item:conv_2}
	\end{enumerate}
	Moreover, if $F$ is completely continuous, then $F(\rho_k) \to F(\rho^\star)$, where $\rho^{\star}$ is the weak limit point of $\left\{\rho_{k}\right\}_{k \in K_{\rm w}}$.
\end{theorem}

\begin{proof}

	\textsl{Step 1.} We show \Cref{item:conv_0,item:conv_1}.

	We begin with the Armijo condition \eqref{eq:armijo}.

By plugging in the identity \eqref{eq:rho_diff}, we obtain
	\begin{multline*}
		F(\rho_{k+1})\leq \\ F(\rho_k) + c_1\alpha_k\int_0^1\int_\Omega \nabla F(\rho_k)\rho_k(t\alpha_k)(1-\rho_k(t\alpha_k))(-\nabla F(\rho_k)-\mu'(t\alpha_k)))\dd x\dd t.
	\end{multline*}
	where $\rho_k(t\alpha_k)$ and $\mu'(t\alpha_k)$ are defined as in \Cref{thm:gbb-finite-termination}.
	Similar to before, we use the orthogonality condition \eqref{eq:orthogonal} and boundedness of $\rho_k(\cdot)$ and $\alpha_k$ \eqref{eq:alpha-bdd} to obtain
	\begin{align*}
		\begin{aligned}
			F(\rho_{k+1})
			 & \leq F(\rho_k) - c_1\alpha_k\int_0^1\|\sqrt{\rho_k(t\alpha_k)(1-\rho_k(t\alpha_k))}(\nabla F(\rho_k)+\mu'(t\alpha_k))\|_{L^2(\Omega)}^2\dd t \\
		\end{aligned}
	\end{align*}
	The sum telescopes, leaving
	\begin{multline*}
		F(\rho_{n+1})\leq\\
		F(\rho_0) - \sum_{k=0}^nc_1\alpha_k\int_0^1\|\sqrt{\rho_k(t\alpha_k)(1-\rho_k(t\alpha_k))}(\nabla F(\rho_k)+\mu'(t\alpha_k))\|_{L^2(\Omega)}^2 \dd t.
	\end{multline*}
	Since \eqref{eq:red-top-opt} admits a solution and SiMPL-A generates a sequence of feasible points the function values are uniformly bounded from below. Therefore, the series converges and we have
	\begin{multline}\label{eq:sqrt_gradF_bdd}
		\alpha_k\int_\Omega \nabla F(\rho_k)(\rho_{k+1}-\rho_k)\dd x
		=\\\alpha_k\int_0^1\|\sqrt{\rho_k(t\alpha_k)(1-\rho_k(t\alpha_k))}(\nabla F(\rho_k)+\mu'(t\alpha_k))\|_{L^2(\Omega)}^2\dd t\rightarrow 0.
	\end{multline}
	Since the last term is non-negative, $\{F(\rho_k)\}$ is a strictly decreasing sequence.
	Again, similarly to \Cref{thm:gbb-finite-termination}, we further proceed by utilizing $0<\rho_k(t\alpha_k)(1-\rho_k(t\alpha_k))\leq 1/4$ and the identity \eqref{eq:rho_diff} to arrive at
	\begin{equation}
		\label{eq:Descent}
		F(\rho_{n+1})\leq F(\rho_0)-c_1\sum_{k=0}^n\alpha_k\|\rho_{k+1}-\rho_k\|_{L^2(\Omega)}^2.
	\end{equation}
	Therefore, $\underline{\alpha}\|\rho_k-\rho_{k+1}\|_{L^2(\Omega)}\leq \alpha_k\|\rho_k - \rho_{k+1}\|_{L^2(\Omega)}\rightarrow 0$.
	\smallskip

	\noindent\textsl{Step 2.}
	Assume by way of contradiction there exists a constant $c > 0$ such that $\liminf_{k\to\infty} k\|\rho_k - \rho_{k+1}\|_{L^2(\Omega)}^2 = c$.
	In particular, $\|\rho_k - \rho_{k+1}\|_{L^2(\Omega)}^2 \geq c/(2k)$ for all sufficiently large $k$.
	Together with~\cref{eq:Descent} and $\underline{\alpha} \leq \alpha_k$, we deduce that $\sum_{k=0}^\infty 1/k < \infty$,
	which is a contradiction.

	\smallskip

	\noindent\textsl{Step 3.} For the final claim, we simply note that the bounded, monotonically decreasing sequence of reals $\left\{F(\rho_k)\right\}$ has a limit $F^\star$. If $F$ is completely continuous, then $\left\{ F(\rho_{k}) \right\}_{k \in K_{\rm w}}$ converges to $F(\rho^\star) = F^\star$.
\end{proof}

In practice, we witness rapid growth of the step sizes for both globalized SiMPL methods; cf.\ \Cref{sec:num}.
The following corollary shows that a quadratic growth condition implies strong convergence of the full sequence of SiMPL-A iterates.

\begin{corollary}
	\label{cor:convergence}
	If $\alpha_k \geq c k^{2(1+\epsilon)}$ for some $c > 0$ and $\epsilon > 0$, then there exists a function $\rho^{\star} \in \mathcal{A}$ such that $\rho_k \to \rho^{\star}$ in $L^2(\Omega)$.

\end{corollary}

\begin{proof}
	The goal of the proof is to show that $\sum_{k=0}^\infty \|\rho_{k+1}-\rho_k\|_{L^2(\Omega)} < \infty$, which implies that $\rho_k$ is Cauchy, and thus $\rho^{\star} \in \mathcal{A}$ exists.
	By~\cref{eq:Descent}, we have that
	\[
		c^2 \sum_{k=0}^\infty k^{2(1+\epsilon)} \|\rho_{k+1}-\rho_k\|_{L^2(\Omega)}^2
		\leq
		\sum_{k=0}^\infty \alpha_k\|\rho_{k+1}-\rho_k\|_{L^2(\Omega)}^2
		< \infty
		\,.
	\]
	This implies that $k^{1+\epsilon}\|\rho_k - \rho_{k+1}\|_{L^2(\Omega)} \to 0$ and, in particular, for every $\delta > 0$ there exists $k_0 > 0$ such that $k^{1+\epsilon}\|\rho_k - \rho_{k+1}\|_{L^2(\Omega)} \leq \delta$ for all $k > k_0$.
	Select one such $\delta$ and $k_0$.
	Then $\sum_{k=k_0}^\infty \|\rho_{k+1}-\rho_k\|_{L^2(\Omega)} \leq \delta \sum_{k=k_0}^\infty 1/k^{1+\epsilon} < \infty$, which is sufficient to conclude that $\rho_k$ is Cauchy in $L^2(\Omega)$.

\end{proof}

\subsection{Analysis of SiMPL-B}\label{sec:simpl-b}
This section considers SiMPL-B, i.e., the SiMPL method with a Bregman backtracking line search \cref{eq:bregman-backtracking}.
The SiMPL-B does not require $\nabla F$ to be Lipschitz continuous as opposed to SiMPL-A.

\begin{theorem}\label{thm:gbb-finite-termination-bregman}

	Let $\rho_k,\rho_{k-1}\in \mathbullet{\mathcal{A}}$.
	Suppose that $F$ is relatively smooth to $\varphi$ with constant $\gamma$.
	Then initial step size $\alpha_{k,0}\geq\gamma$.
	Furthermore, the Bregman backtracking line search \eqref{eq:bregman-backtracking} exhibits finite termination with
	\begin{equation}\label{eq:alpha-bdd-bregman}
		\alpha_k \geq \gamma/2=:\underline{\alpha}.
	\end{equation}
\end{theorem}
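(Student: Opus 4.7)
The plan is to mirror the two-step structure of \Cref{thm:gbb-finite-termination}, exploiting the natural compatibility of the Bregman sufficient decrease condition \eqref{eq:bregman-backtracking} with the relative smoothness hypothesis on $F$. The first step, namely $s_k \geq \gamma$, is identical to the corresponding argument in \Cref{thm:gbb-finite-termination}: combining \eqref{eq:rel-smooth} with the monotonicity of $\sig^{-1}=\nabla\varphi$ yields
\[
\int_\Omega (\psi_k - \psi_{k-1})(\rho_k - \rho_{k-1})\dd x \geq \gamma \int_\Omega \bigl(\nabla F(\rho_k) - \nabla F(\rho_{k-1})\bigr)(\rho_k - \rho_{k-1})\dd x,
\]
and substituting into \eqref{eq:alpha-init} immediately gives the bound.

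For finite termination, the central observation is that \eqref{eq:rel-smooth} is precisely the monotonicity of $\nabla(\gamma^{-1}\varphi - F)$ on $\mathbullet{\mathcal{A}}$, which, for a Fr\'echet differentiable functional on a convex set, is equivalent to the convexity of $\gamma^{-1}\varphi - F$ there. From this convexity I would deduce the standard Bregman descent lemma
\[
F(q) \leq F(\rho) + \int_\Omega \nabla F(\rho)(q-\rho)\dd x + \gamma^{-1} D_\varphi(q,\rho),
\]
valid for all $\rho,q \in \mathbullet{\mathcal{A}}$, by rearranging the tangent-hyperplane inequality for $\gamma^{-1}\varphi - F$ and recognizing the resulting remainder as $\gamma^{-1}D_\varphi(q,\rho)$. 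Differentiability of $\varphi$ on $\mathbullet{\mathcal{A}}$ is supplied by \Cref{lem:fermi-dirac}, while that of $F$ comes from \Cref{as:seq-gen}. Comparing the descent lemma with \eqref{eq:bregman-backtracking} evaluated at $\rho=\rho_k$, $q=\rho_{k+1}$, the sufficient decrease condition is automatically satisfied whenever a trial step $\alpha$ satisfies $\alpha\leq\gamma$, since $\alpha^{-1}\geq\gamma^{-1}$ only enlarges the penalty.

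To extract $\alpha_k \geq \gamma/2$, the remaining work is a short bookkeeping argument on the backtracking procedure. Since the initial trial step is $s_k \geq \gamma$ and the scheme halves $\alpha$ at each rejection, let $m^*$ be the smallest nonnegative integer with $2^{-m^*} s_k \leq \gamma$. By the descent lemma, the condition must hold at $\alpha = 2^{-m^*} s_k$, so the line search terminates with $\alpha_k \geq 2^{-m^*} s_k$. If $m^* = 0$ then $s_k \leq \gamma$, which combined with $s_k \geq \gamma$ forces $\alpha_k = s_k = \gamma$; otherwise $m^* \geq 1$ and the minimality of $m^*$ gives $2^{-(m^*-1)}s_k > \gamma$, hence $\alpha_k \geq 2^{-m^*} s_k > \gamma/2$. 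In either case $\alpha_k \geq \gamma/2$.

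I do not anticipate a serious obstacle here: the argument is markedly shorter than its SiMPL-A counterpart because the Bregman sufficient decrease condition is tailored to the relative smoothness assumption, entirely sidestepping the $L^2$-estimate of $\rho_{k+1}-\rho_k$ and the orthogonality identity \eqref{eq:orthogonal} that drove the Armijo analysis. The one point that deserves modest care is justifying the descent lemma rigorously in the non-reflexive setting, which amounts to restricting attention to line segments lying in $\mathbullet{\mathcal{A}}$ (using convexity of the set and essential smoothness of $\varphi$) and applying the fundamental theorem of calculus to the Fr\'echet differentiable map $t \mapsto (\gamma^{-1}\varphi - F)(\rho + t(q-\rho))$ on such segments.
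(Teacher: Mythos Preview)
Your proposal is correct and follows essentially the same approach as the paper: the paper phrases the key step as ``relative smoothness implies $\alpha^{-1}\varphi-F$ is convex for all $\alpha<\gamma$, hence $D_{\alpha^{-1}\varphi-F}$ is a Bregman divergence,'' which is exactly your Bregman descent lemma in different packaging, and then invokes the halving argument for the bound $\alpha_k\geq\gamma/2$. Your version is in fact more explicit on both the derivation of the descent inequality and the bookkeeping, and your remark on justifying the descent lemma via line segments in $\mathbullet{\mathcal{A}}$ is a welcome point of rigor that the paper's terse proof leaves implicit.
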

\begin{proof}
	Noting that $\varphi$ is convex, the relative smoothness implies that $\alpha^{-1}\varphi-F$ is convex for all $\alpha<\gamma$.
	Then the convexity implies that $D_{\alpha^{-1}\varphi-F}$ is a Bregman divergence.
	Therefore, the line search algorithm terminates with a finite number of iterations, and we also obtain a uniform lower bound $\alpha_k>\gamma/2$.
\end{proof}

We end this section with a result on the convergence behavior of the objective function values and increments in SiMPL-B.

\begin{theorem}\label{thm:b-conv}
	Assume that $F$ is relatively smooth to $\varphi$ with constant $\gamma$ and let $\{\rho_k\}$ be a sequence generated by the SiMPL-B method. Then the following hold
	\begin{enumerate}[label={(\roman*)}]
		\item $\{F(\rho_k)\}$ is a monotonically decreasing sequence,
		\item If there exists a uniform upper bound $\alpha_{\rm max} < \infty$ on the step sizes $\alpha_k$, then we have $\|\rho_{k+1} - \rho_{k}\|_{L^p(\Omega)} \to 0$ for all $p \in [1,\infty)$,

	\end{enumerate}
	Moreover, if $F$ is completely continuous, then $F(\rho_k) \to F(\rho^{\star})$, where $\rho^{\star}$ is the weak limit point of $\left\{\rho_{k}\right\}_{k \in K_{\rm w}}$.
\end{theorem}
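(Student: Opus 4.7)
My plan is to combine the first-order optimality condition for $\rho_{k+1}$ derived in Step~2 of the proof of \Cref{thm:update-rule} with the Bregman backtracking condition \eqref{eq:bregman-backtracking} to obtain a clean symmetric descent estimate. Since $\rho_{k+1} \in \mathbullet{\mathcal A}$, the pointwise identity
\[
\alpha_k \nabla F(\rho_k) + \mu_{k+1} + \sig^{-1}(\rho_{k+1}) - \sig^{-1}(\rho_k) = 0 \text{ a.e.\ in }\Omega
\]
is available. Testing against $\rho_{k+1} - \rho_k$ eliminates $\mu_{k+1}$, because $\rho_k$ and $\rho_{k+1}$ share the same volume, yielding
\[
(\nabla F(\rho_k),\rho_{k+1}-\rho_k) = -\alpha_k^{-1}(\varphi'(\rho_{k+1}) - \varphi'(\rho_k),\rho_{k+1}-\rho_k).
\]
The standard three-point identity $(\varphi'(\rho_{k+1}) - \varphi'(\rho_k),\rho_{k+1}-\rho_k) = D_\varphi(\rho_{k+1},\rho_k) + D_\varphi(\rho_k,\rho_{k+1})$ then reduces \eqref{eq:bregman-backtracking} to the clean descent inequality
\[
F(\rho_{k+1}) \leq F(\rho_k) - \alpha_k^{-1} D_\varphi(\rho_k,\rho_{k+1}),
\]
which proves (i) via the positivity part of \Cref{prop:Bregman}.

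For (ii), I telescope this inequality over $k = 0,1,\ldots,n$. Since the underlying problem \eqref{eq:red-top-opt} admits a solution, $F$ is bounded below on $\mathcal A$, and telescoping yields $\sum_{k=0}^\infty \alpha_k^{-1} D_\varphi(\rho_k,\rho_{k+1}) \leq F(\rho_0) - \inf_{\mathcal A} F < \infty$. The uniform upper bound $\alpha_k \leq \alpha_{\max}$ then forces $D_\varphi(\rho_k,\rho_{k+1}) \to 0$. Strong convexity of the Fermi--Dirac entropy on $L^1$ (\Cref{lem:fermi-dirac}) provides a constant $m > 0$ with $D_\varphi(\rho_k,\rho_{k+1}) \geq \tfrac{m}{2}\|\rho_{k+1}-\rho_k\|_{L^1(\Omega)}^2$, so $\|\rho_{k+1}-\rho_k\|_{L^1(\Omega)} \to 0$. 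Since $|\rho_{k+1}-\rho_k| \leq 1$ pointwise a.e., we have $|\rho_{k+1}-\rho_k|^p \leq |\rho_{k+1}-\rho_k|$ for every $p \geq 1$, and this transfers the convergence to $L^p(\Omega)$ for all $p \in [1,\infty)$.

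For the final assertion, monotonicity together with lower boundedness of $\{F(\rho_k)\}$ implies convergence to some $F^\star \in \mathbb R$. By \Cref{prop:prelim-rho-conv}, the subsequence $\{\rho_k\}_{k \in K_{\rm w}}$ converges weakly in $L^p(\Omega)$ to some $\rho^\star \in \mathcal A$; complete continuity of $F$ then gives $F(\rho_k) \to F(\rho^\star)$ along $K_{\rm w}$, and uniqueness of limits yields $F^\star = F(\rho^\star)$.

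The cleanest part is (i): the Lagrangian condition holds as a pointwise \emph{equation} rather than a variational inequality precisely because $\rho_{k+1}$ sits in the $L^\infty$-relative interior of $\mathcal A$, which is exactly what makes the symmetric Bregman term $D_\varphi(\rho_k,\rho_{k+1})$ emerge. The main obstacle I anticipate is conceptual rather than technical: the upper bound $\alpha_{\max}$ genuinely must be imposed as a hypothesis rather than derived, since the BB-style initial step \eqref{eq:alpha-init} has no a priori upper bound and the backtracking rule only shrinks $\alpha_k$ from its initial value, so only a lower bound on $\alpha_k$ is available (cf.\ \Cref{thm:gbb-finite-termination-bregman}).
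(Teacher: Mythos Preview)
Your proposal is correct and follows essentially the same route as the paper: combine the first-order optimality condition for $\rho_{k+1}$ with the Bregman backtracking condition to obtain $F(\rho_{k+1}) \le F(\rho_k) - \alpha_k^{-1} D_\varphi(\rho_k,\rho_{k+1})$, then telescope and pass from $D_\varphi \to 0$ to norm convergence. The only noteworthy difference is in that last step: the paper invokes \cite[Proposition~2.3]{Borwein1991} to get $L^p$ convergence for $p\ge 2$ and then extends down via boundedness of $\Omega$, whereas your argument through $L^1$-strong convexity of $\varphi$ (\Cref{lem:fermi-dirac}) together with the pointwise bound $|\rho_{k+1}-\rho_k|\le 1$ is more elementary and entirely self-contained.
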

\begin{proof}
	Since $\rho_{k+1}$ is the unique minimizer of \eqref{eq:mirror-descent-minimizer}, it satisfies the first-order optimality condition, for all $\rho\in \mathcal{A}$
	\begin{align*}
		\int_\Omega \nabla F(\rho_k)(\rho-\rho_{k+1})\dd x+\frac{1}{\alpha_k}\int_\Omega \big(\varphi^\prime(\rho_{k+1})-\varphi^\prime(\rho_k)\big)(\rho-\rho_{k+1})\dd x\geq 0.
	\end{align*}
	Taking $\rho=\rho_k$ and rearranging terms, we obtain
	\begin{align*}
		\int_\Omega \nabla F(\rho_k)(\rho_{k+1}-\rho_k)\dd x\leq -\frac{1}{\alpha_k}\int_\Omega \big(\varphi^\prime(\rho_{k+1})-\varphi^\prime(\rho_k)\big)(\rho_{k+1}-\rho_k)\dd x.
	\end{align*}
	Then the termination condition for the backtracking line search implies
	\begin{align*}
		\begin{aligned}
			F(\rho_{k+1}) & \leq F(\rho_k)+\int_\Omega \nabla F(\rho_k)\big(\rho_{k+1}-\rho_k\big)\dd x+\frac{1}{\alpha_k}D_\varphi(\rho_{k+1},\rho_k)       \\
			              & \leq F(\rho_k)-\frac{1}{\alpha_k}\int_\Omega \big(\varphi^\prime(\rho_{k+1})-\varphi^\prime(\rho_k)\big)(\rho_{k+1}-\rho_k)\dd x \\
			              & \qquad +\frac{1}{\alpha_k} \int_\Omega \varphi(\rho_{k+1})-\varphi(\rho_{k})- \varphi^\prime(\rho_k)(\rho_{k+1}-\rho_k)\dd x     \\
			              & =F(\rho_k)-\frac{1}{\alpha_k}D_\varphi(\rho_k,\rho_{k+1}).
		\end{aligned}
	\end{align*}
	Since the last term on the right-hand side is negative, we have strictly monotonically decreasing objective function values. As in the proof of \Cref{thm:conv} we can deduce the
	existence of some real number $F^{\star}$ such that $F(\rho_k) \to F^{\star}$. Moreover, assuming $F$ is completely continuous, then $F^{\star} = F(\rho^{\star})$,
	where $\rho^{\star}$ is the weak limit point of $\left\{\rho_{k}\right\}_{k \in K_{\rm w}}$.

	Continuing, $F(\rho_k) \to F^{\star}$ and $D_{\varphi}(\rho_k,\rho_{k+1}) \le \alpha_{\rm max}(F(\rho_k) - F(\rho_{k+1}))$ imply $D_{\varphi}(\rho_k,\rho_{k+1}) \to 0$ as $k \to +\infty$.
	Adapting \cite[Proposition 2.3]{Borwein1991} to our setting, we now have $\|\rho_{k+1} - \rho_{k}\|_{L^p(\Omega)} \to 0$ for all $p \in [2,\infty)$.
	Norm convergence extends to all $p \in [1,\infty)$ because $\Omega$ is a bounded domain.
\end{proof}

\section{Numerical experiments}\label{sec:num}
We provide numerical experiments to verify the convergence properties of SiMPL-A and SiMPL-B proven above.

All numerical experiments are based on minimizing the compliance of a two-dimensional cantilever beam \cite{lazarov2011-filter}. In our notation, this means that
\[
	\widehat{F}(\tilde{\rho},u) := \int_{\Omega} f\cdot u \dd x,
\]
where $f \in L^2(\Omega)$ is a bulk forcing term.

In this setting, the solution to the elasticity problem~\cref{eq:lin-elas} is the negative of the solution to the adjoint problem~\cref{eq:adj}.
Therefore, while objective function evaluation (e.g., computing each $F(\rho_k)$) involves solving two PDEs, in particular,~\cref{eq:filt-eq,eq:lin-elas}, gradient evaluation (e.g., computing each $\nabla F(\rho_k)$) only requires solving one additional PDE, namely,~\cref{eq:adj_filter}.
All experiments were implemented in the MFEM library in C++ \cite{mfem2021,mfem2024}.

\subsection{Problem parameters}
In our demonstrative example, we set $\Omega = (0,3)\times(0,1) \subset \mathbb{R}^2$, with zero-displacement boundary conditions $u = 0$ on $\Gamma_0 = [0,3]\times\{0\} \subset \partial \Omega$ and zero-traction boundary conditions on $\partial\Omega \setminus \Gamma_0$; cf.~\cref{eq:lin-elas}.
We then apply a distributed load at the right-hand middle point $x_0 = (2.9, 0.5)$:
\begin{align*}
	f = \begin{cases}(0,-1)^T&\text{ if }\|x-x_0\|_{\ell^2}\leq 0.05,\\ 0&\text{ otherwise}.\end{cases}
\end{align*}

Moreover, we select the void material density $\rho_0=10^{-6}$, the penalty parameter $s=3$, the volume fraction $\theta=0.5$, and the filter radius $r_{\min} = 0.05$ so that $\epsilon = 0.05/2\sqrt{3}$.

\subsection{Initial step size and stopping criteria}
There are many possible stopping criteria.
Since SiMPL methods provide feasible iterates, it is enough to observe the complementary condition introduced in \cref{prop:compl}.
To avoid performing a backtracking line search from a converged solution, we define the following approximated Lagrange multiplier similarly to \eqref{eq:lagrange}:
\begin{align*}
	\tilde{\lambda}_k=(\tilde{\psi}_{k+1}-\psi_k)/\alpha_{k-1}
	\,,~~
	\text{ where }
	\tilde{\psi}_{k+1}=\psi_k-\alpha_{k-1}\nabla F(\rho_k)+\tilde{\mu}
\end{align*}
and $\tilde{\mu}$ solves \eqref{eq:volume-correction} with $\alpha=\alpha_{k-1}$.
We propose the following KKT error estimator to measure the violation of the complementary condition~\eqref{eq:lagrange}:
\begin{equation}\label{eq:kkt_tol}
	\texttt{KKT}_k:= \|\tilde{\lambda}_k - \min\{0, \rho_k +\tilde{\lambda}_k\} - \max\{0, \rho_k - 1 + \tilde{\lambda}_k\}\|_{L^1(\Omega)},
\end{equation}
and we choose to stop our implementations when $\texttt{KKT}_k \leq \texttt{tol}:=10^{-5}$.

For this choice of estimator and stopping criteria, we observed both mesh- and order-independent behavior after discretization; i.e., the number of iterations $k$ needed to reach $\texttt{KKT}_k \leq \texttt{tol}$ was nearly identical across mesh sizes and polynomial degrees.
Following \Cref{rem:mu-solve}, we used the Illinois algorithm for the volume projection. The average number of iterations was approximately 10, and the computational cost was negligible compared to the gradient step.
As the formula~\eqref{eq:alpha-init} for the initial step size guess $\alpha_{k,0}$ is not well-defined at the first iteration $k=0$, we set $\alpha_{0,0}=1/\|\nabla F(\rho_0)\|_{L^\infty(\Omega)}$.

\subsection{Experiment 1: Mesh-independence} 
\label{ssub:experiment_1}

\begin{figure}
	\centering
  \includegraphics[width=0.297\textwidth]{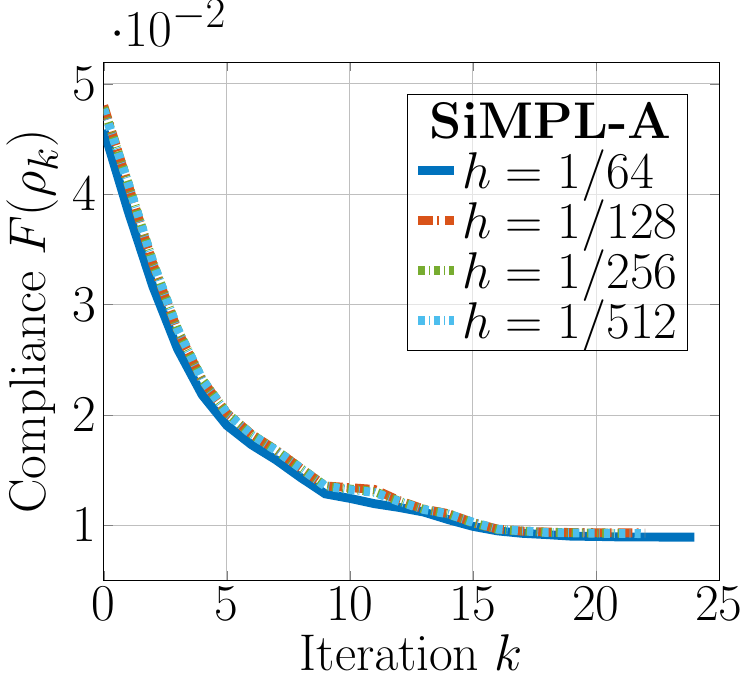}
  \includegraphics[width=0.325\textwidth]{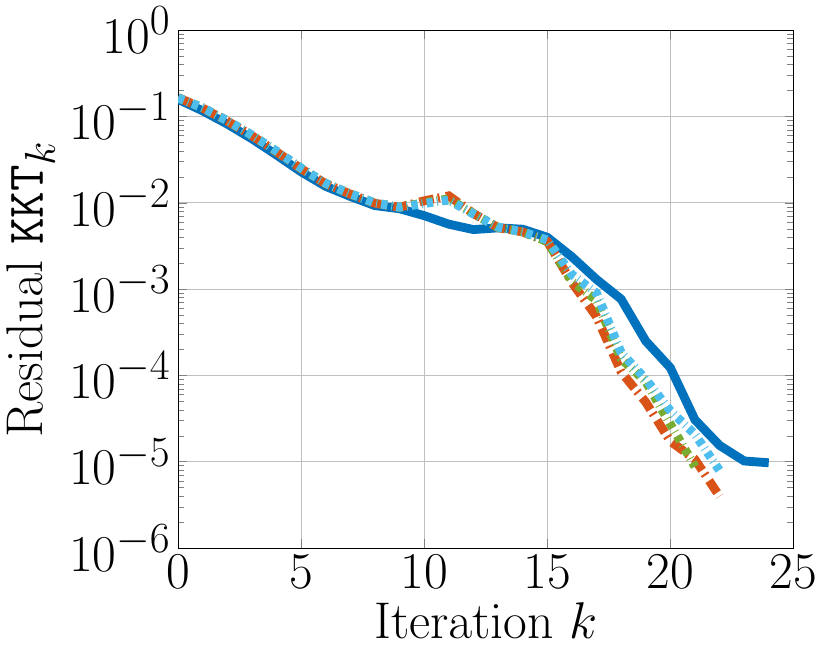}
  \includegraphics[width=0.3125\textwidth]{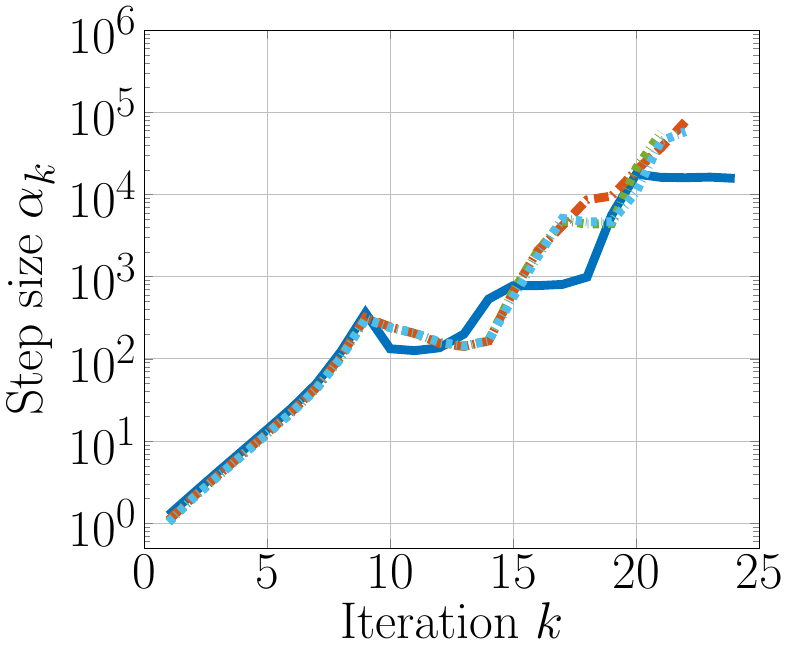}\\[1pt]
  \includegraphics[width=0.297\textwidth]{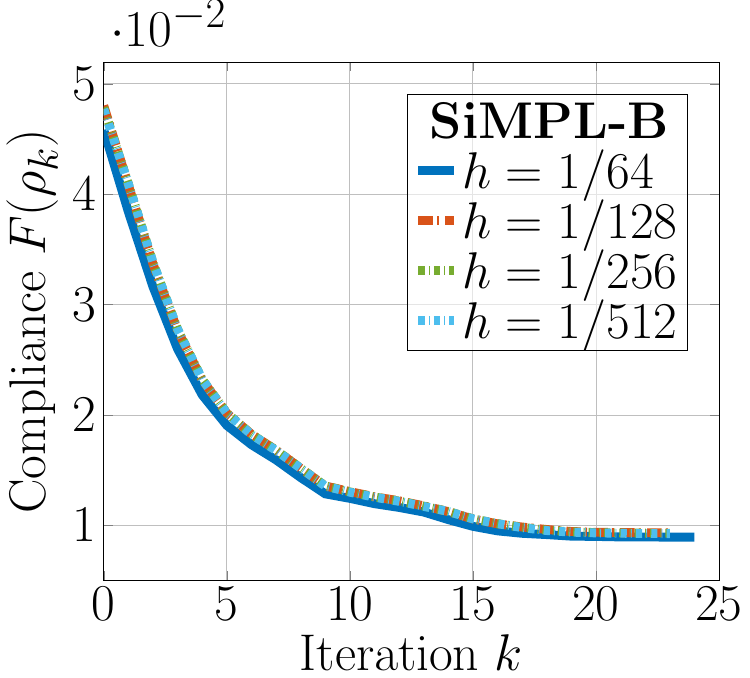}
  \includegraphics[width=0.325\textwidth]{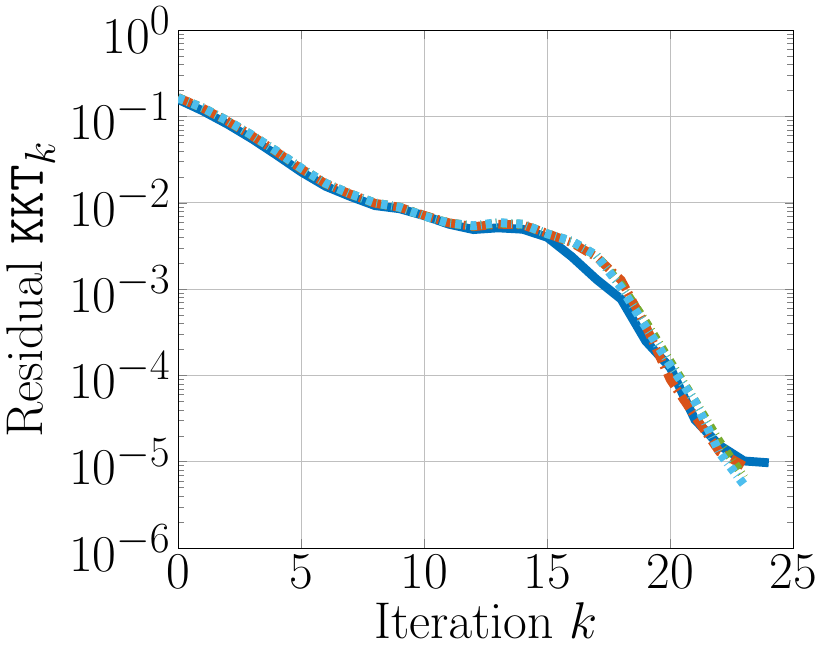}
  \includegraphics[width=0.3125\textwidth]{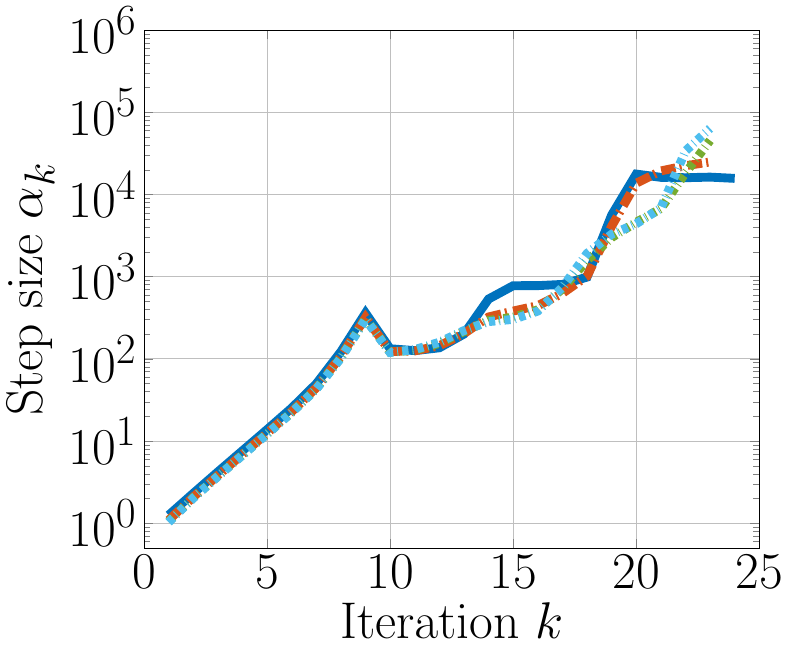}
	\caption{Experiment 1. Compliance (left), KKT residual (center), and step size (right) for $h=1/64$, $1/128$, $1/256$, and $1/512$. SiMPL-A (top row) and SiMPL-B (bottom row) exhibit mesh-independent behavior.}
	\label{fig:cantilever-plots}
\end{figure}
\begin{figure}
	\centering
  \hspace{-2.3em}
	\subfloat[][$\rho_h$ with $h=1/64$]{\includegraphics[width=0.3\textwidth]{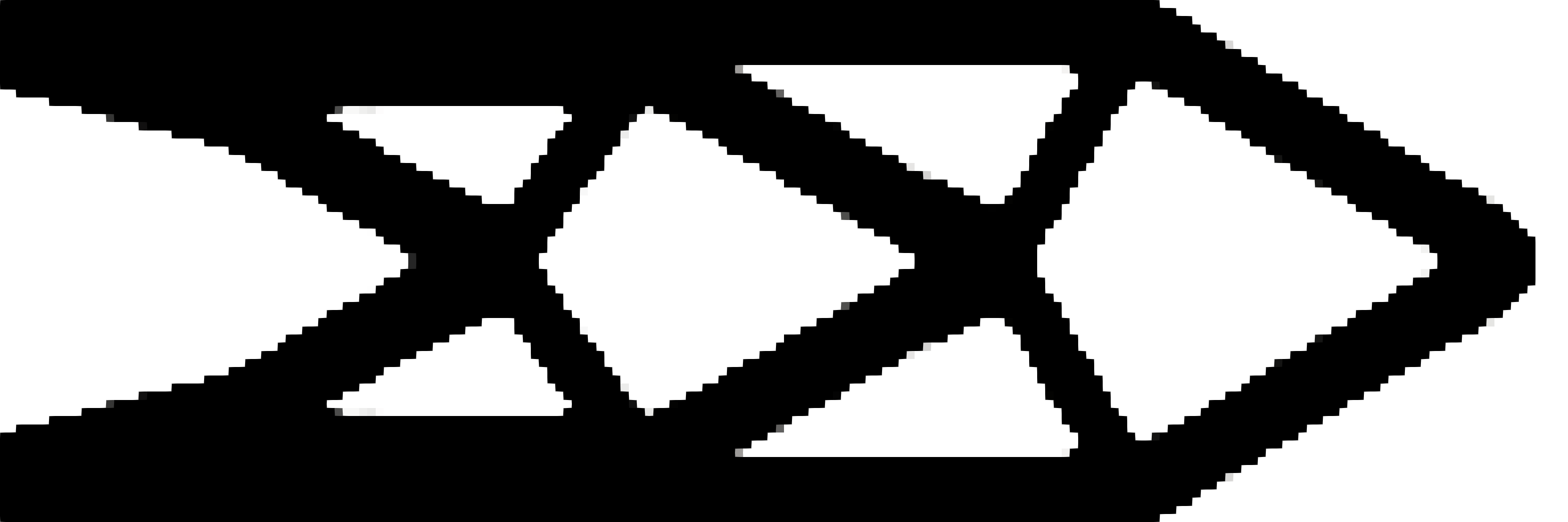}}
	~
	\subfloat[][$\rho_h$ with $h=1/128$]{\includegraphics[width=0.3\textwidth]{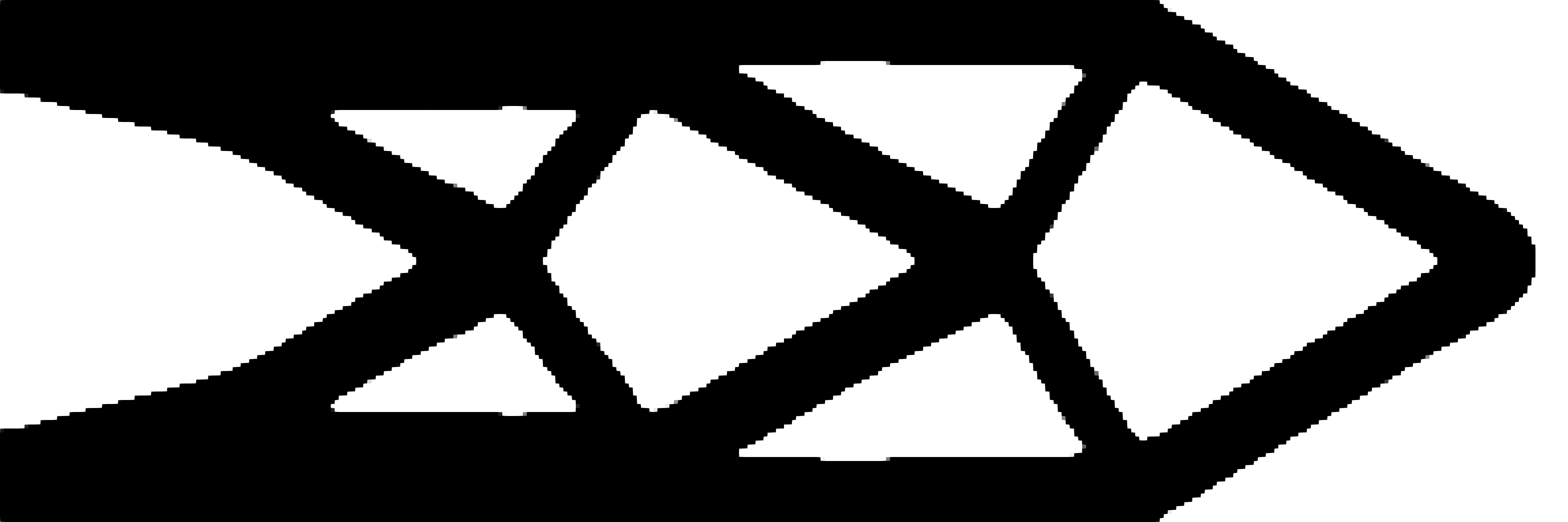}}
	~
  \subfloat[][$\rho_h$ with $h=1/256$]{\includegraphics[width=0.3\textwidth]{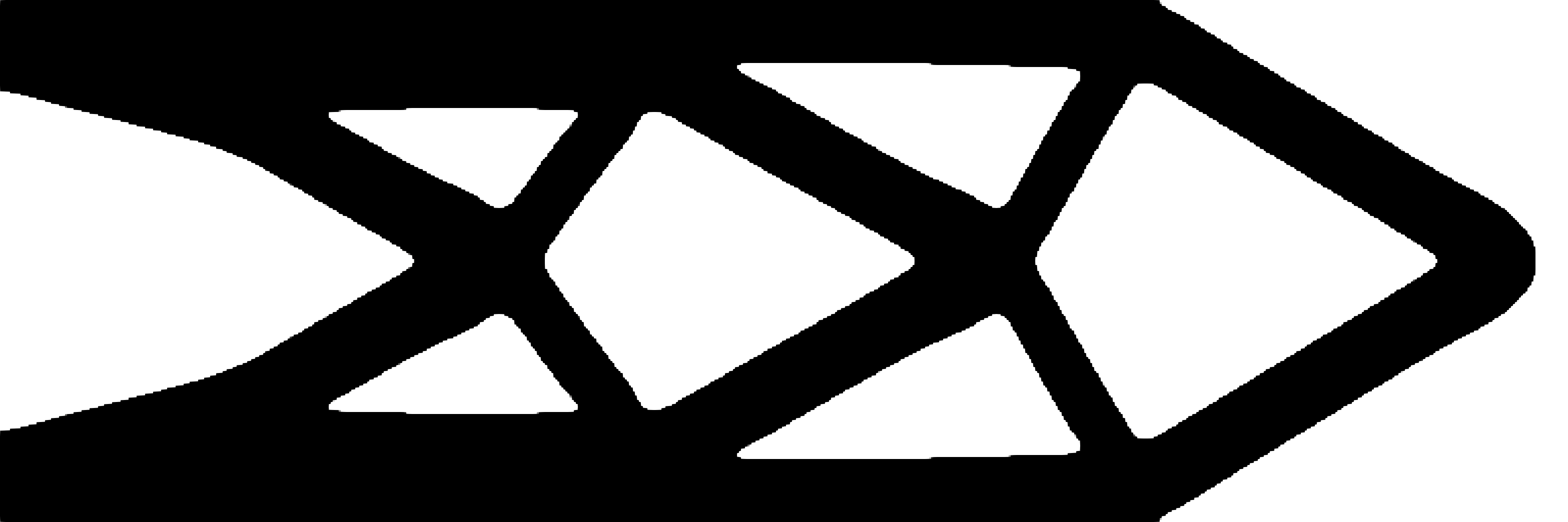}}\\
  \vspace{-5.5em}
	\subfloat[][$\tilde{\rho}_h$ with $h=1/64$]{\includegraphics[width=0.3\textwidth]{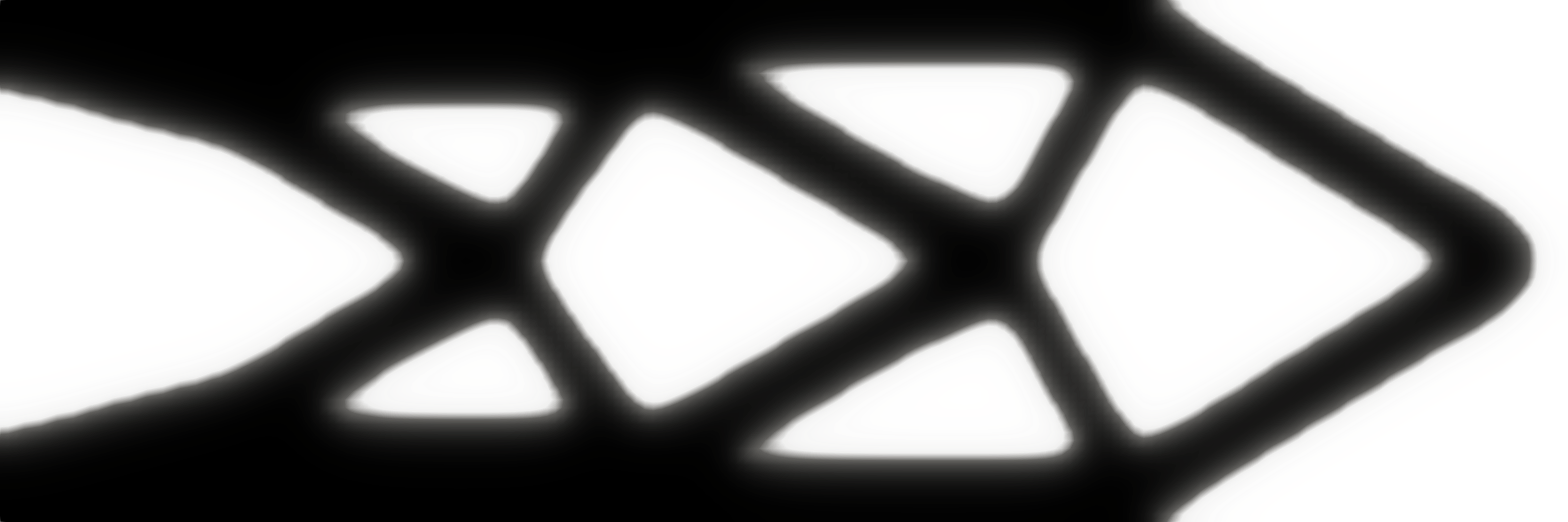}}
	~
	\subfloat[][$\tilde{\rho}_h$ with $h=1/128$]{\includegraphics[width=0.3\textwidth]{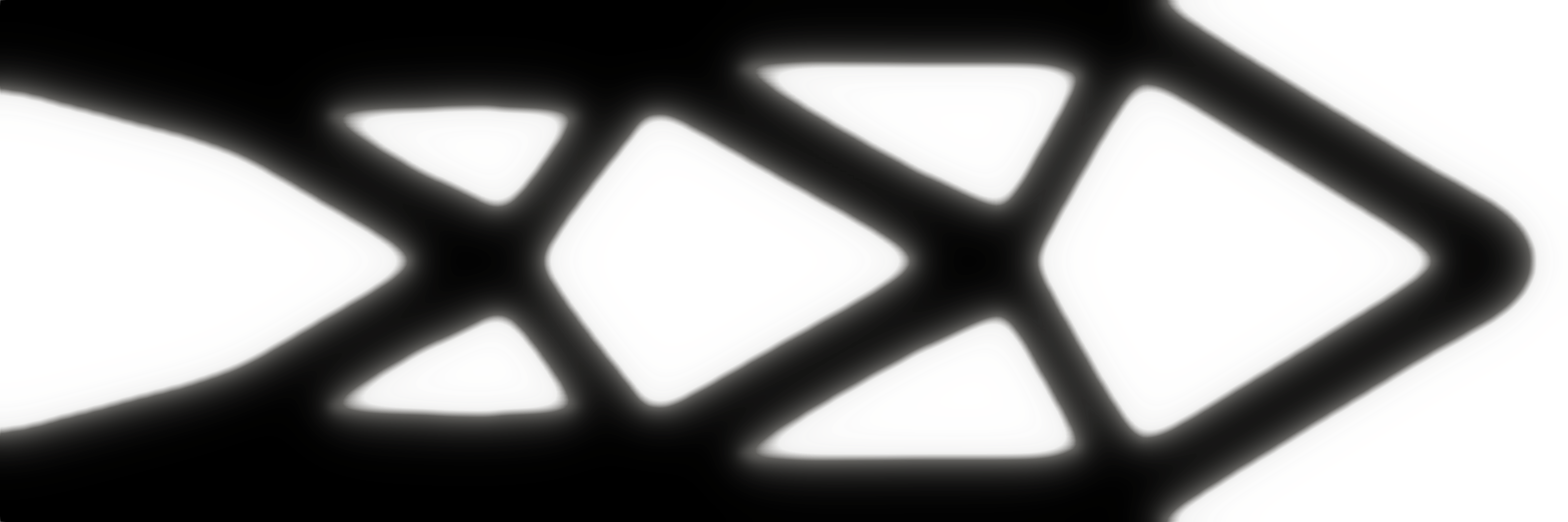}}
	~
	\subfloat[][$\tilde{\rho}_h$ with $h=1/256$]{\includegraphics[width=0.3\textwidth]{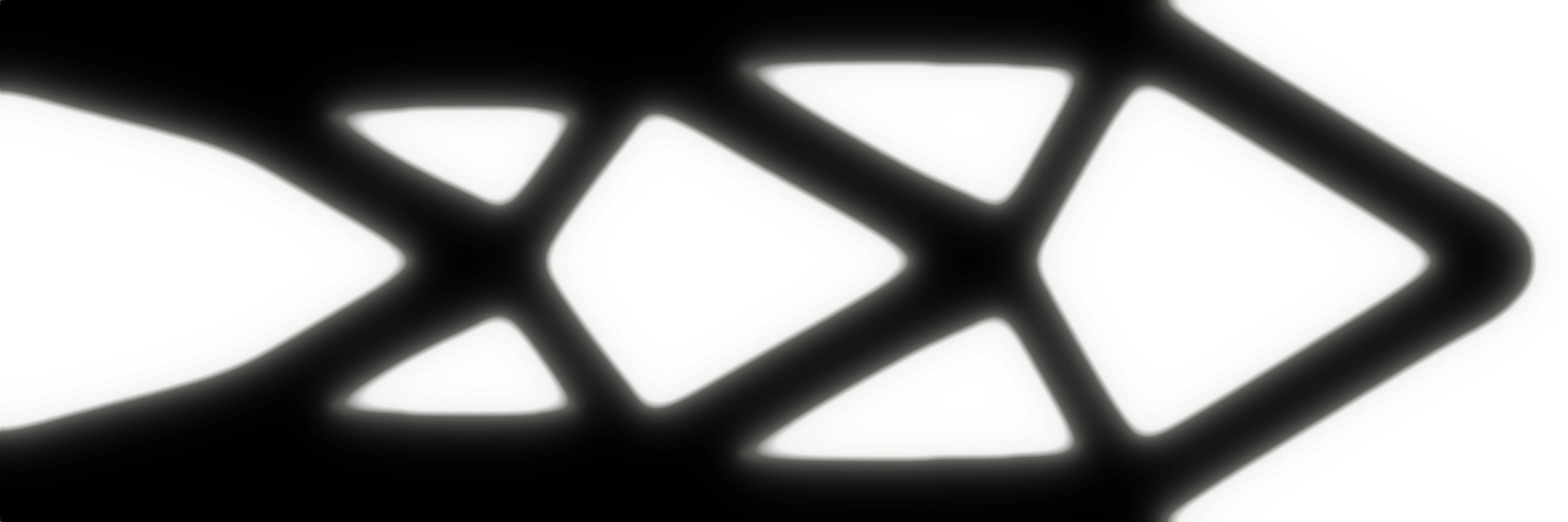}}
  \includegraphics[width=0.05\textwidth]{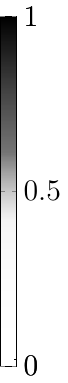}
	\caption{Experiment 1. The discretized design density $\rho_h$ (top) and filtered density $\tilde{\rho}_h$ (bottom) at the final iteration of the SiMPL-B method for $h=1/64$, $1/128$, and $1/256$.
	SiMPL-A yields visually-identical solutions.
	}
	\label{fig:design}
\end{figure}

In this experiment, we used continuous piece-wise linear finite elements over a uniform rectangular grid to discretize the PDEs and piece-wise constant elements to discretize the latent variable $\psi$.
The computed values of the compliance $F(\rho_k)$, KKT residual $\texttt{KKT}_k$, and step size $\alpha_k$ at each iteration are reported in \Cref{fig:cantilever-plots} for different mesh sizes $h>0$.
The corresponding optimized designs are depicted in~\Cref{fig:design}.
From \Cref{fig:cantilever-plots}, we see that the compliance values form a monotonically decreasing sequence $F(\rho_{k+1}) \leq F(\rho_k)$, as expected from \Cref{thm:conv}.
Furthermore, the similar trajectories of the compliance and KKT residual sequences indicate mesh-independence of the method.
We supplement these figures with \Cref{tab:simpl}, which reports the exact number of iterations, backtracking sub-iterations, and PDE solves for each mesh size we tested.
\begin{table}
	\begin{center}
		\begin{tabular}{ |c|c| c| c| c|}
			\hline
			Method                   & $h$             & Iterations & Backtracks & PDE Solves \\
			\hline
			\multirow{4}{*}{SiMPL-A} & 1/64\phantom{0} & 24         & 3          & 78         \\
			                         & 1/128           & 22         & 1          & 68         \\
			                         & 1/256           & 21         & 1          & 65         \\
			                         & 1/512           & 22         & 2          & 70         \\
			\hline
			\multirow{4}{*}{SiMPL-B} & 1/64\phantom{0} & 24         & 3          & 78         \\
			                         & 1/128           & 23         & 2          & 73         \\
			                         & 1/256           & 23         & 2          & 73         \\
			                         & 1/512           & 23         & 2          & 73         \\
			\hline
		\end{tabular}
	\end{center}
	\caption{Experiment 1. The number of iterations, backtracking sub-iterations, and PDE solves to achieve $\texttt{KKT}_k \leq 10^{-5}$ for various mesh sizes $h$.
	\label{tab:simpl}}
\end{table}
To arrive at the number of reported PDE solves, recall that three PDE solves are required for each iteration with two extra PDE solves per backtracking sub-iteration.

The total number of sub-iterations usually falls below 10\% of the total number of mirror descent iterations, suggesting that \eqref{eq:alpha-init} provides a good initial step size guess for the problem considered here.

\subsection{Experiment 2: Order-independence}
\label{ssub:experiment_2_order_independence}
\begin{table}
	\begin{center}
		\begin{tabular}{ |c|c| c| c| c|}
			\hline
			Method                   & ~~$p$~~ & Iterations & Backtracks & PDE Solves \\
			\hline
			\multirow{4}{*}{SiMPL-A} & 1     & 22         & 1          & 68         \\
			                         & 2     & 22         & 1          & 68         \\
			                         & 3     & 23         & 1          & 71         \\
			                         & 4     & 23         & 1          & 71         \\
			\hline
			\multirow{4}{*}{SiMPL-B} & 1     & 23         & 2          & 73         \\
			                         & 2     & 23         & 2          & 73         \\
			                         & 3     & 22         & 2          & 70         \\
			                         & 4     & 23         & 2          & 73         \\
			\hline
		\end{tabular}
	\end{center}
	\caption{Experiment 2. The number of iterations, backtracking sub-iterations, and PDE solves to achieve $\texttt{KKT}_k \leq 10^{-5}$ for various polynomial orders $p$.\label{tab:simpl-order}}
\end{table}

Unlike other methods for topology optimization, which require discretizing the density field $\rho_k$ with finite elements, SiMPL always yields pointwise feasible iterates.
This is because the SiMPL method requires discretizing the latent variable $\psi_k$ with finite elements and then using a sigmoid function to construct the discrete density field $\rho_{k,h} = \sigma(\psi_{k,h})$; cf.\ \Cref{sub:LVMD}.
To demonstrate how this can be leveraged to enable high approximation order topology optimization, we fixed $h = 1/64$ in Experiment 1 and tested discretizations with higher approximation orders $p>0$.
More specifically, letting $p = 0$ denote the polynomial degree of the discretization used in Experiment 1, we use $p = 1$, $2$, $3$, and $4$ respectively denote discretizations using the next four higher-order finite element discretizations in each solution variable.
For instance, $p = 3$ indicates using continuous piece-wise quartic (fourth-order) finite elements to discretize the PDEs, while the latent variable $\psi$ is discretized with piece-wise cubic (third-order) elements.
The results are given in \Cref{fig:cantilever-order,tab:simpl-order}.
\begin{figure}
	\centering
  \includegraphics[width=0.297\textwidth]{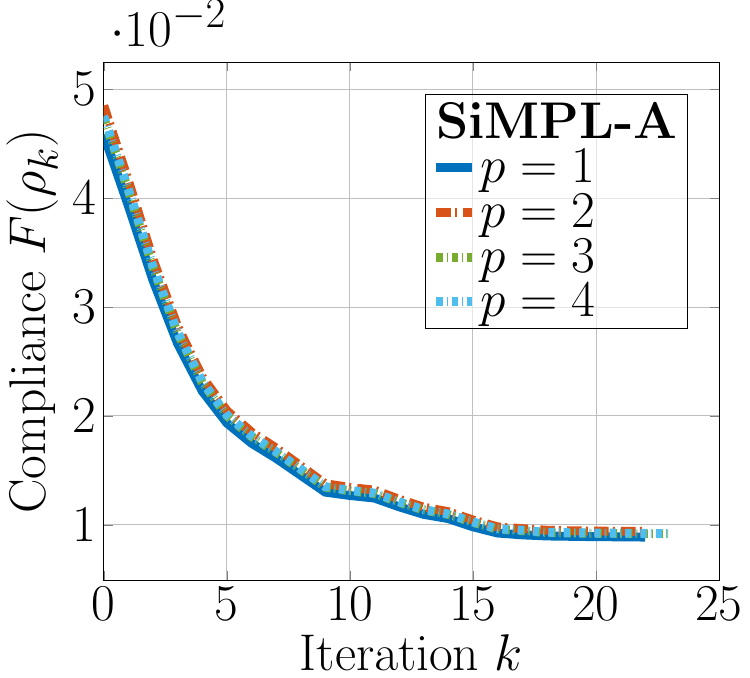}
  \includegraphics[width=0.325\textwidth]{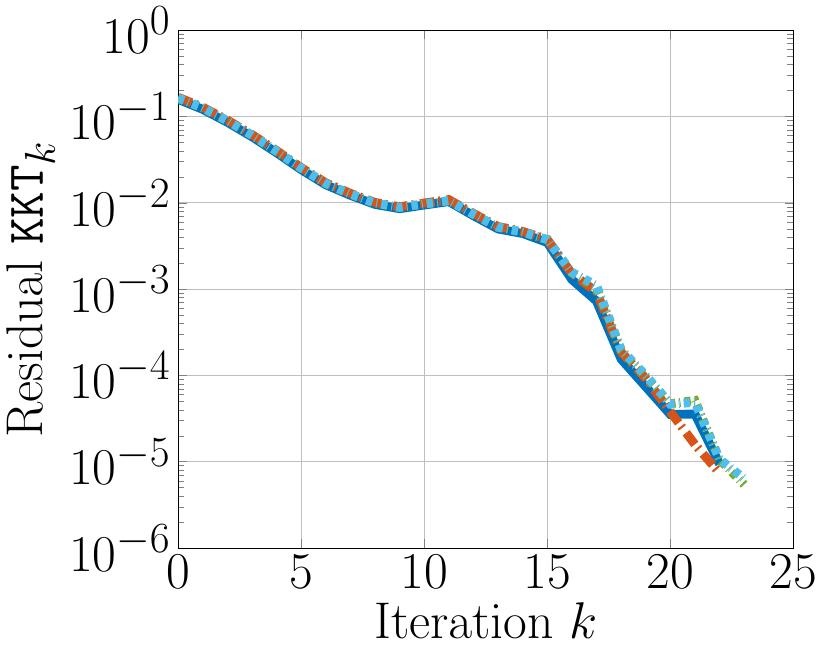}
  \includegraphics[width=0.3125\textwidth]{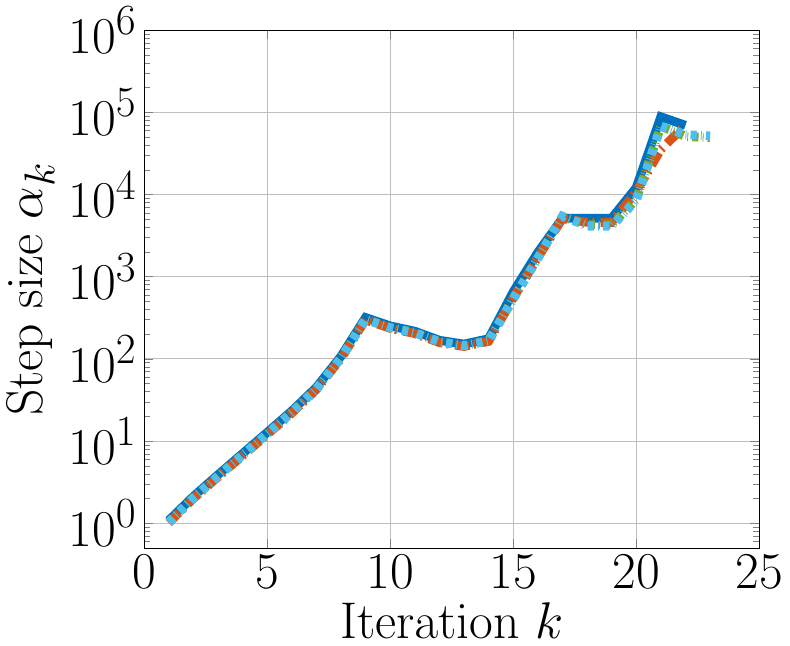}\\[1pt]
  \includegraphics[width=0.297\textwidth]{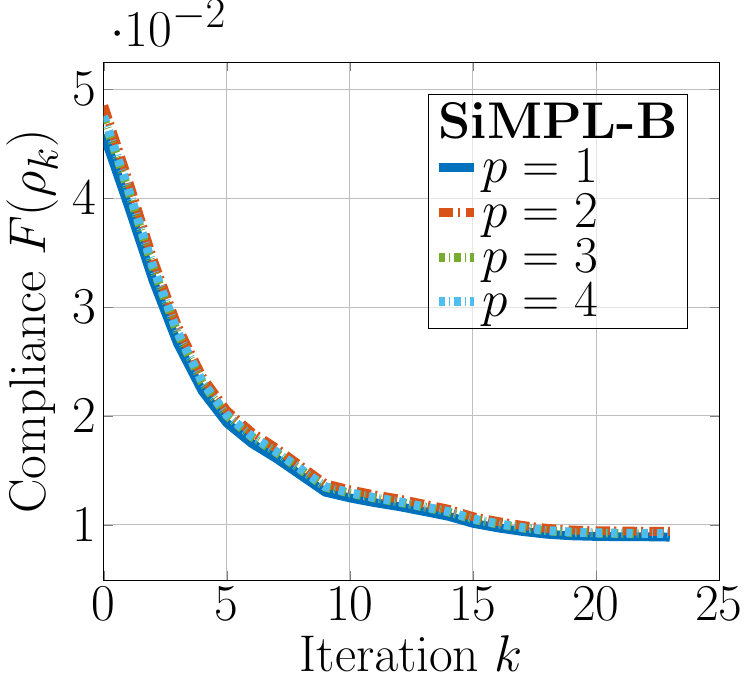}
  \includegraphics[width=0.325\textwidth]{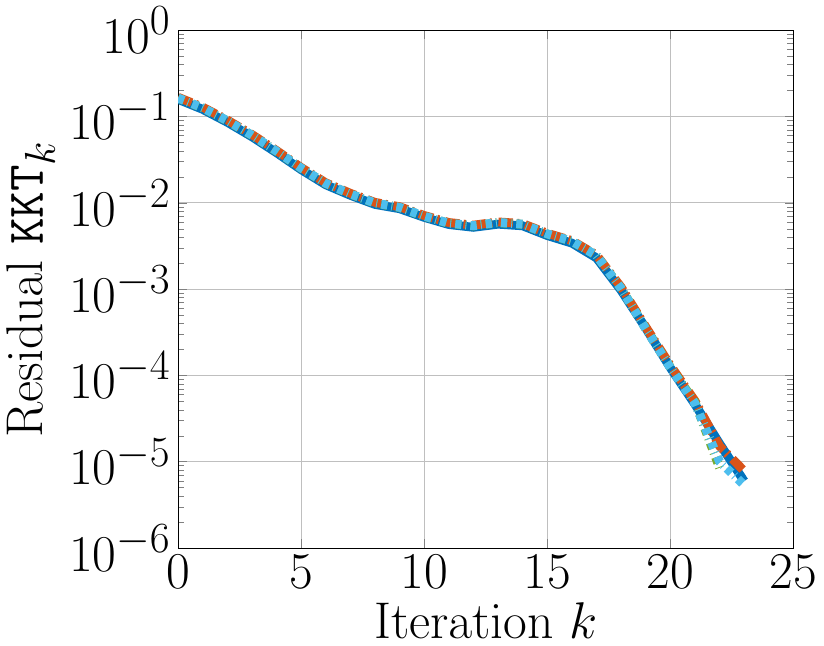}
  \includegraphics[width=0.3125\textwidth]{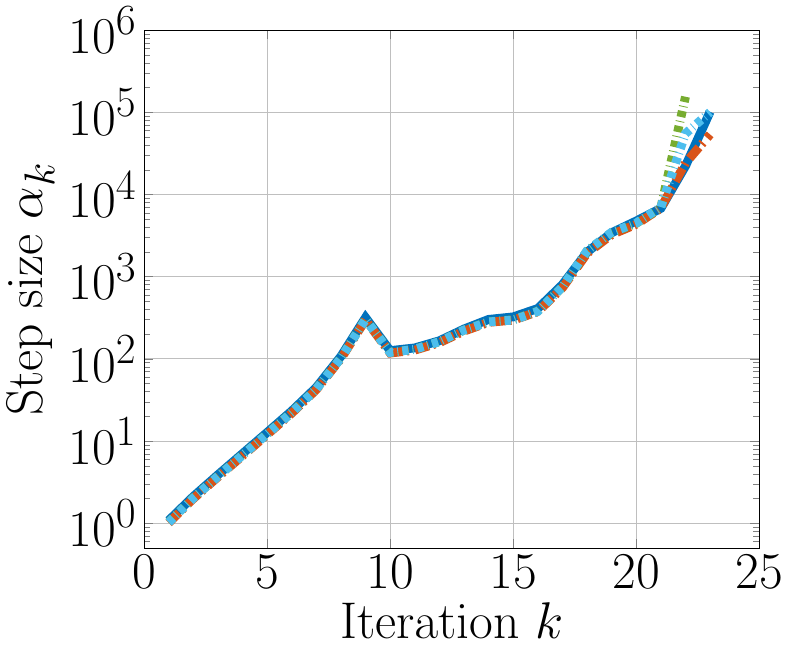}
	\caption{Experiment 2. Compliance (left), KKT residual (center), and step size (right) for polynomial degrees $p=1, 2, 3$ and $4$ on a uniform rectangular grid $h=1/64$.  SiMPL-A (top row) and SiMPL-B (bottom row) exhibit order-independent behavior.}
	\label{fig:cantilever-order}
\end{figure}
As in Experiment 1, the number of iterations to reach $\texttt{KKT}_k \leq 10^{-5}$ remained nearly constant, regardless of the polynomial degree.
Also, the number of backtracking line search sub-iterations appear to remain constant between orders, indicating order-independence of the SiMPL method.

\begin{remark}
  \label{rem:filter-dmp}

We note that while each discrete density field $\rho_{k,h}$ satisfies the bound constraints $0 \leq \rho_{k,h} \leq 1$ everywhere in $\Omega$, the discrete filtered density $\tilde{\rho}_{k,h}$ may violate these bound constraints since the standard finite element method we use to solve~\cref{eq:filt-eq} may not satisfy a discrete maximum principle. We can obtain a bound-preserving filtered density following \cite{keith2023proximal} or using other methods that satisfy the discrete maximum principle.
For simplicity, in Experiment 2, we clipped the filtered density $\tilde{\rho}_{k,h}$ if it overshot the bound constraints $0$ and $1$ and witnessed no adverse side-effects.

\end{remark}

\section{Conclusion}
\label{sec:Conclusion}
In this work, we derive and analyze the SiMPL method for topology optimization with two possible globalization strategies based on backtracking line search algorithms.
These strategies lead to two versions of the method referred to as SiMPL-A and SiMPL-B.
The latter version is completely parameter-free, while the former involves a tunable parameter $c_1=10^{-4}$ whose optimal value depends on the underlying problem.
By approximating the latent variable $\psi$ instead of the primal design density $\rho$, the SiMPL method is numerically stable while producing pointwise feasible iterates on the \textit{discrete level} due to the relation $\rho_{h} = \sigma(\psi_h)$.
As expected, the use of adaptive step size selection strategies greatly accelerates what would normally amount to a first-order method and, in fact, leads to almost second-order method behavior.
We suspect that this is due to the choice of $\alpha_k$ and its relation to the classic Barzilai--Borwein algorithm, which in essence is a type of quasi-Newton method, and the fact that the line search strategies often accept $\alpha_k$ immediately.
The SiMPL method can be extended naturally to high-order approximations without losing the pointwise bound preserving property.
Since we allowed for the objective function to be rather arbitrary, SiMPL is flexible enough to be applied to a variety of topology optimization problems such as self-weighted compliance minimization or compliant mechanism, as has been demonstrated in the companion paper \cite{simplapp}, where it is also shown to outperform the most popular first-order methods in the topology optimization literature.

\bibliographystyle{siamplain}
\bibliography{main.bbl}
\end{document}